\newfont{\msbm}{msbm10 at 11pt}
\newcommand {\R} {\mbox{\msbm R}}
\def\cF{\mathcal{F}}
\newtheorem{Theo}{Theorem}
\newtheorem{Lemma}[Theo]{Lemma}
\newtheorem{Prop}[Theo]{Proposition}
\newtheorem{Rmk}[Theo]{Remark}
\DeclareMathOperator{\E}{\mathbb{E}}
\renewcommand{\P}{}
\let\P\relax
\DeclareMathOperator{\P}{\mathbb{P}}
\begin{document}
\title{Critical branching Brownian motion with absorption: \\ particle configurations}

\author{Julien Berestycki\thanks{Research supported in part by ANR-08-BLAN-0220-01 and ANR-08-BLAN-0190 and ANR-09-BLAN-0215}, Nathana\"el Berestycki\thanks{Research supported in part by EPSRC grants EP/GO55068/1 and EP/I03372X/1}
  \ and Jason Schweinsberg\thanks{Supported in part by NSF Grants DMS-0805472 and DMS-1206195}}
\maketitle

\footnote{{\it MSC 2010}.  Primary 60J65; Secondary 60J80, 60J25}

\footnote{{\it Key words and phrases}.  Branching Brownian motion, critical phenomena, Yaglom limit laws}

\vspace{-.6in}
\begin{abstract}
We consider critical branching Brownian motion with absorption, in which there is initially a single particle at $x > 0$, particles move according to independent one-dimensional Brownian motions with the critical drift of $-\sqrt{2}$, and particles are absorbed when they reach zero.  Here we obtain asymptotic results concerning the behavior of the process before the extinction time, as the position $x$ of the initial particle tends to infinity.  We estimate the number of particles in the system at a given time and the position of the right-most particle.  We also obtain asymptotic results for the configuration of particles at a typical time.
\end{abstract}

\section{Introduction}

We consider branching Brownian motion with absorption.  At time zero, there is a single particle at $x > 0$.  Each particle moves independently according to one-dimensional Brownian motion with a drift of $-\mu$, and each particle independently splits into two at rate $1$.  Particles are absorbed when they reach the origin.  This process was first studied in 1978 by Kesten \cite{kesten}, who showed that with positive probability there are particles alive at all times if $\mu < \sqrt{2}$, but all particles are eventually absorbed almost surely if $\mu \geq \sqrt{2}$.

In recent years, there has been a surge of renewed interest in this process.  Some of this interest has been driven by connections between branching Brownian motion with absorption and the FKPP equation.  See, for example, the work of Harris, Harris, and Kyprianou \cite{hhk06}, who used branching Brownian motion with absorption to establish existence and uniqueness results for the FKPP traveling-wave equation.  In other work, such as \cite{bbs, bdmm1, bdmm2, maillard}, branching Brownian motion with absorption or a very similar process has been used to model a population undergoing selection.  In this setting, particles represent individuals in a population, branching events correspond to births, the positions of the particles are the fitnesses of the individuals, and absorption at zero models the death of individuals whose fitness becomes too low.

In this paper, we consider branching Brownian motion with absorption in the critical case with $\mu = \sqrt{2}$.  This process is known to die out with probability one, but we are able to use techniques developed in \cite{bbs, bbs2} to obtain some new and rather precise results about the behavior of the process before the extinction time.  We focus on asymptotic results about the number of particles, the position of the right-most particle, and the configuration of particles as the position $x$ of the initial particle tends to infinity.

\subsection{Main results}

Let $N(s)$ be the number of particles at time $s$, and let $X_1(s) \geq X_2(s) \geq \dots \geq X_{N(s)}(s)$ denote the positions of the particles at time $s$.
Let
\begin{equation}\label{Ydef}
Y(s) = \sum_{i=1}^{N(s)} e^{\sqrt{2} X_i(s)}.
\end{equation}
Throughout the paper, we will use the constants
\begin{equation}\label{cdef}
\tau = \frac{2 \sqrt{2}}{3 \pi^2}, \hspace{.5in}c = \tau^{-1/3} = \bigg( \frac{3 \pi^2}{2 \sqrt{2}} \bigg)^{1/3}.
\end{equation}
Let $t = \tau x^3$, which is approximately the extinction time of the process when $x$ is large.  More precisely, it was shown in \cite{bbs2} that that for all $\varepsilon > 0$, there is a positive constant $\beta$ such that for sufficiently large $x$, the extinction time is between $t - \beta x^2$ and $t + \beta x^2$ with probability at least $1 - \varepsilon$.

Our first result shows how the number of particles evolves over time.  For times $s$ between $Bx^2$ and $(1 - \delta)t$, where $B$ is a large constant and $\delta$ is a small constant, with high probability this result estimates the number of particles at time $s$ to within a constant factor.

\begin{Theo}\label{numthm}
Fix $\varepsilon > 0$ and $\delta > 0$.  Then there exists a positive constant $B$ depending on $\varepsilon$ and positive constants $C_1$ and $C_2$ depending on $B$, $\delta$, and $\varepsilon$ such that for sufficiently large $x$, we have
$$\P \bigg( \frac{C_1}{x^3} e^{\sqrt{2} (1 - s/t)^{1/3}x} \leq N(s) \leq \frac{C_2}{x^3} e^{\sqrt{2} (1 - s/t)^{1/3}x} \bigg) > 1 - \varepsilon$$
for all $s \in [Bx^2, (1 - \delta) t]$.
\end{Theo}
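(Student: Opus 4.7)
The plan is to exploit the connection between $N(s)$, the exponential sum $Y(s)$ from \eqref{Ydef}, and the moving right barrier $L(s) := (1-s/t)^{1/3}x$. By analogy with BBM with drift $-\sqrt 2$ on a fixed interval $[0,L]$, whose principal Dirichlet eigenvalue (including branching) is $-\pi^2/(2L^2)$ and whose principal left eigenfunction is $e^{\sqrt 2 y}\sin(\pi y/L)$, the constant $\tau = 2\sqrt 2/(3\pi^2)$ is exactly the one for which $L(s)$ is a self-consistent moving barrier, in the sense that $\int_0^s \pi^2/(2L(r)^2)\,dr = \sqrt 2\,(x - L(s))$. The rightmost-particle estimates in \cite{bbs2} let us insert an artificial absorbing barrier slightly above $L(s)$ without affecting the process on an event of probability at least $1 - \varepsilon/4$; we work with this truncated process throughout.

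For the upper bound, the many-to-one lemma expresses $\E[N(s)]$ as $e^s \P_x(\tau^\ast > s)$, where $\tau^\ast$ is the first exit time from the moving interval of a Brownian motion with drift $-\sqrt 2$. Girsanov reduces this to the survival probability of a standard BM on $[0,L(\cdot)]$, and an eigenfunction expansion combined with a slow-variation argument for the moving boundary yields $\E[N(s)] \leq C x^{-3}\,e^{\sqrt 2\, L(s)}$; the $x^{-3}$ factor arises from an $L(s)^{-2}$ factor in the eigenfunction normalization times an $x^{-1}$ factor from $\sin(\pi x/L(0))$, since the artificial barrier sits a small buffer above $x$. Markov's inequality then yields the upper bound on $N(s)$.

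For the lower bound, I will first show that $Y(s)$ concentrates around its mean, which a parallel many-to-one computation identifies as being of order $x^{-3}\,L(s)^2\,e^{\sqrt 2\,L(s)}$: the extra $L(s)^2$ relative to $\E[N(s)]$ comes from integrating $e^{\sqrt 2 y}$ against the quasi-stationary density $\propto e^{-\sqrt 2 y}\sin(\pi y/L(s))$, so the weight factors cancel and produce $\int_0^{L(s)} \sin(\pi y/L(s))\,dy = 2L(s)/\pi$ in the numerator. The second moment is controlled via a many-to-two argument, splitting according to the time $r$ and position of the most recent common ancestor: the pair weight $e^{2\sqrt 2 y}$ is kept integrable by the truncation at $L(r)$, and the remaining integral over $r$ is controlled by the same eigenvalue gap $\pi^2/(2L(r)^2)$ that governs the first moment. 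Paley--Zygmund then delivers $Y(s) \geq c\,\E[Y(s)]$ with high probability. Finally, a "no single particle dominates" argument, applying the maximum control of \cite{bbs2} to each particle's descendant subtree, shows that the typical contribution per particle to $Y(s)$ is at most $O(L(s)^2)$, so that $N(s) \geq c\,Y(s)/L(s)^2 \geq C_1\,x^{-3}\,e^{\sqrt 2\, L(s)}$.

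The main obstacle is the second-moment bound on $Y(s)$. Branching induces strong positive correlations between particles, and the pair weight $e^{2\sqrt 2 y}$ is borderline divergent near the barrier: only the truncation at $L(r)$ together with the $\pi^2/(2L(r)^2)$ spectral gap keep the many-to-two integral finite. Carrying out this pair-level analogue of the barrier estimates of \cite{bbs2} with the moving boundary, and checking that the contribution of higher eigenfunction modes is genuinely negligible for $s \geq Bx^2$, is likely to constitute the bulk of the technical work.
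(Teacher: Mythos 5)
Your overall architecture — truncate at a barrier slightly above $L(s)$, then use moment bounds on the truncated process — is exactly the paper's strategy, and your identification of $L(s)$, the eigenfunction $e^{-\sqrt 2 y}\sin(\pi y/L)$, the spectral gap $\pi^2/2L^2$, and the orders of magnitude of $\E[N(s)]$ and $\E[Y(s)]$ are all correct. The upper bound, via truncation plus Markov's inequality applied to $\E[N(s)]$, is a valid and simpler route than the paper's (which instead applies a two-sided Chebyshev bound to the truncated count conditional on $\mathcal F_{s-Bx^2}$). However, your lower bound has a genuine gap in its final step.

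You propose: show $Y(s)$ concentrates near $\E[Y(s)] \asymp x^{-1}e^{\sqrt 2 L(s)}$, then conclude $N(s) \geq c\,Y(s)/L(s)^2$ because ``the typical contribution per particle to $Y(s)$ is at most $O(L(s)^2)$.'' But $Y(s)/N(s) \asymp L(s)^2$ is only an \emph{average} identity valid when the empirical configuration matches the quasi-stationary density; it is not a pointwise bound, and you cannot deduce $N(s) \geq c\,Y(s)/L(s)^2$ from it without already knowing the configuration is spread out. In fact, even with truncation, a single particle near $L(s)$ contributes $e^{\sqrt 2 L(s)}$ to $Y(s)$, which is comparable to the entire target value of $Y(s)$, so a configuration with $O(1)$ particles near $L(s)$ could satisfy the lower bound on $Y(s)$ while $N(s)$ is tiny. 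Ruling this out is essentially the content of the configuration theorems (Theorems \ref{config1} and \ref{config2}), which in the paper are proved \emph{after} Theorem \ref{numthm}, not before; trying to invoke a version of them here would be circular. The paper sidesteps this entirely: it never passes from $Y$ to $N$. Instead, it runs a second-moment/Chebyshev argument directly on the truncated count $X(1)$ conditional on $\mathcal F_r$ with $r=s-Bx^2$ (Lemmas \ref{meanXf} and \ref{varXf}), so that $\E[X(1)\mid\mathcal F_r] \asymp \hat Z / L(s)^2$ with $\hat Z \asymp Z(r)$ the projection onto the principal eigenfunction. The concentration and lower bound for $Z(r)$ (Propositions \ref{Zlower}, \ref{Zupper}, built on Lemma \ref{EZbound} and Proposition \ref{VarZProp}) are what carry the argument; $Y(r)$ enters only through an \emph{upper} bound (Proposition \ref{Yupper}) used to control the conditional variance. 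If you want to repair your plan, you should replace $Y(s)$ by $Z(s)$ as the concentrating quantity and then get the lower bound on $N(s)$ directly via a conditional second-moment bound on the particle count itself, rather than dividing $Y$ by $L^2$.
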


For $0 \leq s \leq t$, define
\begin{equation}\label{Ldef}
L(s) = x \bigg( 1 - \frac{s}{t} \bigg)^{1/3}  = c(t - s)^{1/3}.
\end{equation}
The next result shows that at time $s$, the right-most particle is usually slightly to the left of $L(s)$.
\begin{Theo}\label{rtthm}
Suppose $0 < u < \tau$, and let $s = u x^3$.  Let $\varepsilon > 0$.  Then there exist $d_1 > 0$ and $d_2 > 0$, depending on $u$ and $\varepsilon$, such that for sufficiently large $x$, $$\P\bigg(L(s) - \frac{3}{\sqrt{2}} \log x - d_1 < X_1(s) < L(s) - \frac{3}{\sqrt{2}} \log x + d_2 \bigg) > 1 - \varepsilon.$$
\end{Theo}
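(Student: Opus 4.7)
The plan is to establish both bounds for $X_1(s)$ by controlling the expected number of particles near the target level, with a many-to-one/many-to-two approach backed by the Airy-type path estimates developed for Theorem \ref{numthm}. The guiding heuristic is that, given $N(s) \asymp x^{-3} e^{\sqrt{2} L(s)}$, most particles sit in the bulk near the absorbing barrier at $0$ and their upper tail decays like $e^{-\sqrt{2} z}$, so the right-most particle lies at the $1/N(s)$-quantile of this tail, namely $\tfrac{1}{\sqrt{2}} \log N(s) = L(s) - \tfrac{3}{\sqrt{2}} \log x + O(1)$, in accord with the theorem.

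For the upper bound I would introduce the moving barrier $\Gamma_\ell(r) = L(r) + \ell$ for $r \in [0, s]$, say that a trajectory is \emph{good} if it stays strictly below $\Gamma_\ell$ throughout $[0,s]$, and write
\[
\P(X_1(s) > y) \leq \P(\text{some ancestor crosses } \Gamma_\ell) + \E\bigl[\#\{i : X_i(s) > y,\ \text{ancestry good}\}\bigr].
\]
The first term is $< \varepsilon/2$ for a large constant $\ell$, via the barrier-crossing estimates of \cite{bbs, bbs2}. For the second, the many-to-one formula combined with Girsanov (absorbing the drift $-\sqrt{2}$ into an $e^{-\sqrt{2}(z-x)}$ factor and the branching into $e^s$) reduces it to
\[
\int_y^\infty e^{-\sqrt{2}(z-x)} q_s(x,z;\Gamma_\ell)\, dz,
\]
where $q_s(x,z;\Gamma_\ell)$ is the density at $z$ of a standard Brownian motion started at $x$ and constrained to $(0, \Gamma_\ell(r))$ on $[0,s]$. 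The cubic shape of the barrier means this density is accurately described by the principal Airy-type eigenfunction in a frame co-moving with $L(r)$, giving a bound of order $x^{-3} e^{\sqrt{2}(L(s)-z)}$ in the relevant range; integrating from $y$ gives $O(x^{-3} e^{\sqrt{2}(L(s)-y)})$. Inserting $y = L(s) - \tfrac{3}{\sqrt{2}} \log x + d_2$ yields $O(e^{-\sqrt{2} d_2})$, which is $< \varepsilon/2$ once $d_2$ is large.

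For the lower bound I would apply a Paley--Zygmund argument to the count $Z$ of good particles landing in the unit window $[y_*, y_*+1]$ at time $s$, with $y_* = L(s) - \tfrac{3}{\sqrt{2}} \log x - d_1$. The first moment is $\E[Z] \asymp e^{\sqrt{2} d_1}$ by the same Airy estimate. For the second moment I would use the many-to-two formula and decompose pairs by their most recent common ancestor $r \in [0,s]$:
\[
\E[Z(Z-1)] \leq 2 \int_0^s e^r\, \E_x^{-\sqrt{2}}\!\bigl[\mathbf{1}_{\{B\ \text{good and alive up to}\ r\}}\, m_{s-r}(B_r)^2\bigr] \, dr,
\]
where $m_{s-r}(z)$ is the mean number of good descendants from a single particle at $z$ landing in $[y_*, y_*+1]$ at time $s$. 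The cubic barrier provides enough suppression that this integral is controlled by $C(\E[Z])^2$, whence $\P(Z \geq 1) \geq c > 0$. To upgrade from positive probability to $1-\varepsilon$, I would apply the positive-probability estimate at a slightly earlier time $s' = s - K x^2$: by Theorem \ref{numthm} there are $\gtrsim x^{-3} e^{\sqrt{2} L(s')}$ particles at $s'$, a positive fraction of which lie in positions from which the conditional probability of producing a descendant in the window at time $s$ is bounded below (here one uses that $L(s') - L(s) = \Theta(K)$, comfortably larger than the required gap once $K$ is large). Conditionally on the configuration at $s'$ these sub-BBMs evolve independently, so the probability that \emph{none} of them succeeds is at most $\exp(-c N(s')) \to 0$.

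The principal difficulty is the second-moment bound: correlations are concentrated when the common ancestor time $r$ is close to $s$ (the split is recent), and one must show that the barrier $\Gamma_\ell$ is tight enough to suppress such pairs without strangling the typical spine. This is exactly where the cubic scale $L(r) = c(t-r)^{1/3}$ was introduced, so the technical core of the argument is a careful repackaging, with slightly modified weights, of the path-integral and eigenfunction estimates already carried out for Theorem \ref{numthm}.
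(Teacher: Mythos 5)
Your approach fails at the upper bound, and the failure is structural rather than technical. The paper itself flags this: ``the moment bounds obtained in this paper are not sharp enough near the right boundary $L(s)$ to give such precise control over the position of the right-most particle.''

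Concretely, your claimed density bound $x^{-3}e^{\sqrt{2}(L(s)-z)}$ near the boundary is wrong. The correct density (Lemma \ref{densityprop}, or Lemma \ref{stripdensity} for the flat boundary) carries the factor $\sin(\pi z/L(s))$, which vanishes linearly at $z = L(s)$. Using the true density, the expected number of particles to the right of $L(s)-w$ at time $s$ is of order $x^{-3}\,w\,e^{\sqrt{2}w}$, so at $w = \frac{3}{\sqrt{2}}\log x - d_2$ this is $\asymp e^{-\sqrt{2}d_2}\log x \to \infty$. Markov's inequality then gives nothing, and the naive first moment would only deliver $X_1(s) \leq L(s) - \frac{3}{\sqrt{2}}\log x + \frac{1}{\sqrt{2}}\log\log x + O(1)$, weaker than the theorem. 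Your added barrier $\Gamma_\ell(r) = L(r) + \ell$ does not repair this: it sits a full $\Theta(\log x)$ above the target level near time $s$, so it fails to prune the paths (rise high early, dip back down) that inflate the first moment. This is precisely the Bramson phenomenon; to make a moment argument work one needs an envelope that \emph{descends to within $O(1)$ of the target level near time $s$} (the analogue of Bramson's upper envelope), not a constant offset from $L(r)$. Doing this correctly would essentially amount to re-deriving Bramson's estimate inside the strip. The paper avoids this entirely: it steps back to time $r = s - \gamma x^2$, uses independence of the subtrees rooted there, and imports Bramson's \emph{probability} bound $u(t,w)$ for free BBM (Lemmas \ref{u1lem} and \ref{u2lem}, derived from Proposition 8.2 and Corollary 1 of \cite{bram83}). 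The $\frac{3}{\sqrt{2}}\log x$ correction emerges as $\frac{3}{2\sqrt{2}}\log(\gamma x^2)$ from the Bramson median $m(\gamma x^2)$, and the sum $\sum_i p_i$ is evaluated via Theorem \ref{config2}; this circumvents the expectation-versus-probability gap that breaks the first-moment argument. The small additional step is to handle absorption at $0$ between $r$ and $s$ (Lemmas \ref{Dlem} and \ref{u2lem}).

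Your lower-bound sketch is closer in spirit to the paper --- stepping back to $s' = s - Kx^2$ and using independence of subtrees is essentially the decomposition via $\cF_r$ that the paper uses, and the bound $1 - \exp(-\sum p_i) \le \P(\cdot\mid \cF_r) \le \sum p_i$ in \eqref{rteq} is exactly what makes this work --- but as written it rests on the same incorrect first-moment estimate, and the assertion that ``the cubic barrier provides enough suppression'' in the second moment is precisely the claim that needs a Bramson-type envelope argument to justify. The clean way through, as the paper does, is to push all of the boundary-layer difficulty into Bramson's already-proved result.
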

We are also able to obtain results about the entire configuration of particles.  The key idea is that at time $s$, the density of particles near $y \in (0, L(s))$ will be roughly proportional to
\begin{equation}
e^{-\sqrt{2} y} \sin \bigg( \frac{\pi y}{L(s)} \bigg). \label{density}
\end{equation}
Establishing a rigorous version of this statement requires proving two theorems.  In Theorem \ref{config1}, we consider the probability measure in which a mass of $1/N(s)$ is placed at the position of each particle at time $s$.  Because most particles are close to the origin and $\sin(\pi y/L(s)) \approx \pi y/L(s)$ for small $y$, in the limit this probability measure has a density proportional to $y e^{-\sqrt{2} y}$.  In Theorem \ref{config2}, we consider the probability measure in which a particle at position $z$ is assigned a mass proportional to $e^{\sqrt{2} z}$.  In this case, particles over the entire interval from $0$ to $L(s)$ contribute significantly even in the limit, and the sinusoidal shape is observed.

For these results, we use $\Rightarrow$ to denote convergence in distribution for random elements in the Polish space of probability measures on $(0, \infty)$, endowed with the weak topology.  We also use $\delta_y$ to denote a unit mass at $y$.

\begin{Theo}\label{config1}
Suppose $0 < u < \tau$, and let $s = ux^3$.  Define the probability measure $$\chi(u) = \frac{1}{N(s)} \sum_{i=1}^{N(s)} \delta_{X_i(s)}.$$  Let $\mu$ be the probability measure on $(0, \infty)$ with density $g(y) = 2y e^{-\sqrt{2} y}$.  Then $\chi(u) \Rightarrow \mu$ as $x \rightarrow \infty$.
\end{Theo}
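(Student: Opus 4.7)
The plan is to prove $\chi(u)\Rightarrow\mu$ by verifying that for every bounded continuous $f:(0,\infty)\to\R$, the random variable $\int f\,d\chi(u)=N(s)^{-1}\sum_i f(X_i(s))$ converges in probability to $\int f\,d\mu$, together with a tightness statement ensuring no mass escapes to $0$ or $\infty$. Tightness at $\infty$ is essentially Theorem~\ref{rtthm}: with high probability $\chi(u)$ is supported in $(0,L(s)]$, so it suffices to bound the mass in $[K,L(s)]$ for $K$ large. Tightness at $0$ follows from the fact that the limiting density $g(y)=2ye^{-\sqrt{2}y}$ vanishes at the origin, combined with a many-to-one bound on $\E[N_{(0,\epsilon)}(s)]$ and the lower bound on $N(s)$ from Theorem~\ref{numthm}. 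It therefore suffices to show $N_{[a,b]}(s)/N(s)\to\mu([a,b])$ in probability for each compact $[a,b]\subset(0,\infty)$.

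The central computation is a first-moment estimate. By the many-to-one formula,
\[\E\Bigl[\sum_i \mathbf{1}_{\{X_i(s)\in[a,b]\}}\Bigr]\;=\;\int_a^b e^s p_s^{-\sqrt{2}}(x,y)\,dy,\]
where by Girsanov and the reflection principle
\[p_s^{-\sqrt{2}}(x,y)\;=\;\frac{e^{-\sqrt{2}(y-x)-s}}{\sqrt{2\pi s}}\Bigl[e^{-(y-x)^2/(2s)}-e^{-(y+x)^2/(2s)}\Bigr].\]
Writing the bracket as $2\sinh(xy/s)\,e^{-(x^2+y^2)/(2s)}$ and noting that $xy/s=y/(ux^2)\to 0$ and $x^2/(2s)=1/(2ux)\to 0$ for $y$ fixed and $s=ux^3$, Taylor expansion yields
\[e^s p_s^{-\sqrt{2}}(x,y)\;\sim\;\frac{e^{\sqrt{2}x}}{\sqrt{2\pi u^3}\,x^{7/2}}\cdot 2ye^{-\sqrt{2}y},\]
and hence $\E[\sum_i \mathbf{1}_{\{X_i(s)\in[a,b]\}}]\sim A(x)\,\mu([a,b])$ with a prefactor $A(x)=e^{\sqrt{2}x}/(\sqrt{2\pi u^3}\,x^{7/2})$ independent of the interval. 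In this sense the first moment already encodes the correct shape $g(y)=2ye^{-\sqrt{2}y}$.

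The main obstacle is that $A(x)$ greatly exceeds the typical size $e^{\sqrt{2}L(s)}/x^3$ of $N(s)$ given by Theorem~\ref{numthm} (since $L(s)<x$ for $u>0$). Thus the unconditional expectation is dominated by rare configurations in which the process has pushed anomalously many particles to the right, and a first-moment argument alone is not enough. To extract the typical behavior I would condition on $\mathcal{F}_{s'}$ at an intermediate time $s'$ with $r=s-s'$ satisfying $x^2\ll r\ll s$, chosen so that (i) Theorems~\ref{numthm} and~\ref{rtthm} apply at time $s'$, providing a good event $G'$ of high probability on which the configuration at $s'$ is controlled, and (ii) the sub-BBMs rooted at the $N(s')$ ancestors are long enough to forget the ancestor's starting position yet short enough that the global structure is preserved. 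On $G'$, the same Taylor expansion applied subtree by subtree shows that for each ancestor at position $z\asymp L(s')$, the expected number of descendants in $[a,b]$ at time $s$ is asymptotically $\mu([a,b])$ times the expected total number of descendants of that ancestor. Summing over ancestors, $\E[N_{[a,b]}(s)\mid\mathcal{F}_{s'}]/\E[N(s)\mid\mathcal{F}_{s'}]\to\mu([a,b])$ on $G'$.

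It then remains to promote this to a statement about the ratio of the unconditional quantities. Since the subtrees rooted at distinct ancestors at time $s'$ are independent, $N_{[a,b]}(s)$ and $N(s)$ are sums of $N(s')$ independent random variables. A conditional second-moment (many-to-two) estimate applied to a single ancestor's subtree shows that, on $G'$, each individual ancestor contributes only a negligible fraction of the total, so a law-of-large-numbers argument gives concentration of both $N_{[a,b]}(s)$ and $N(s)$ around their $\mathcal{F}_{s'}$-conditional means; taking ratios yields $N_{[a,b]}(s)/N(s)\to\mu([a,b])$ in probability. The delicate step is the many-to-two estimate on $G'$: one must show that pairs of particles with a recent common ancestor contribute only the expected order of magnitude to the variance and are not swamped by ``exceptional'' ancestors whose subtrees grow anomalously large, and this is where the full structural results of \cite{bbs,bbs2} and of the earlier sections of the paper enter.
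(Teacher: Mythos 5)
Your identification of the central difficulty---that $\E[\sum_i\mathbf{1}_{\{X_i(s)\in[a,b]\}}]\sim e^{\sqrt{2}x}/x^{7/2}$, which dwarfs the typical value $\asymp e^{\sqrt{2}L(s)}/x^3$ of $N(s)$, so that unconditional moments are dominated by rare configurations---is correct and well explained, and your high-level strategy (condition on an intermediate $\sigma$-field, exploit subtree independence, concentrate via a conditional second moment) is essentially the paper's. But the proposal leaves the one genuinely hard step unresolved, and the fix the paper uses for it is absent.

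The missing ingredient is an explicit truncation at the curved boundary $L(\cdot)$ during $[r,s]$. Your own remark that the delicate point is to prevent ``exceptional ancestors whose subtrees grow anomalously large'' from swamping the variance is exactly right, and the paper's answer is to additionally kill particles at $L(u)$ for $u\in[r,s]$: Lemma~\ref{fphi} shows this costs nothing with probability close to one, Lemma~\ref{meanXf} gives the conditional mean of the truncated functional $X(f)$ (which is what your Taylor-expansion heuristic would compute once the barrier is imposed), Lemma~\ref{varXf} gives a sharp conditional variance bound, and Chebyshev together with Propositions~\ref{Zlower}, \ref{Zupper}, \ref{Yupper} on $\hat Z$ and $Y(r)$ closes the argument. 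Without the barrier, the conditional second moment of a single subtree inherits the same heavy-tailed behaviour you diagnosed for the global first moment, so deferring the variance control to ``the full structural results of \cite{bbs,bbs2}'' leaves a real gap. Two smaller points. (i) You propose the intermediate gap $x^2\ll s-s'\ll s$, but the paper uses $s-s'=Bx^2$ with $B$ a fixed large constant (chosen so that (\ref{Dineq}) is small); a fixed $B$ is what makes $L(\cdot)$ essentially constant over $[r,s]$, allowing the constant-strip estimates of Section~\ref{constbdrysec} to be imported, and all constants in Lemmas~\ref{meanXf}--\ref{varXf} depend on $B$, so letting $B$ diverge requires a check you have not done. (ii) Your tightness-at-zero step (Markov on the unconditional $\E[N_{(0,\epsilon)}(s)]$ against the typical lower bound for $N(s)$) fails for the very reason you diagnosed: the resulting bound is of order $A(x)\epsilon^2 x^3 e^{-\sqrt{2}L(s)}$, which diverges. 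In fact no separate tightness is needed: since $\chi(u)$ and $\mu$ are both probability measures, $\int f\,d\chi(u)\to_p\int f\,d\mu$ for all bounded continuous $f$ already implies $\chi(u)\Rightarrow\mu$ (Theorem 16.16 of \cite{kall}), which is exactly the reduction the paper uses.
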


\begin{Theo}\label{config2}
Suppose $0 < u < \tau$, and let $s = ux^3$.  Define the probability measure $$\eta(u) = \frac{1}{Y(s)} \sum_{i=1}^{N(s)} e^{\sqrt{2} X_i(s)} \delta_{X_i(s)/L(s)}.$$  Let $\nu$ be the probability measure on $(0,1)$ with density $h(y) = \frac{\pi}{2} \sin(\pi y)$.  Then $\eta(u) \Rightarrow \nu$ as $x \rightarrow \infty$.
\end{Theo}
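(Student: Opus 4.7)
\medskip
\noindent\textbf{Proof plan.} Weak convergence in probability of $\eta(u)$ to $\nu$ is equivalent to showing that $Z_f(s)/Y(s) \to c_f := \frac{\pi}{2}\int_0^1 f(y)\sin(\pi y)\, dy$ in probability for every bounded continuous $f$ on $[0,1]$, where $Z_f(s) := \sum_{i=1}^{N(s)} e^{\sqrt 2 X_i(s)} f(X_i(s)/L(s))$ and $Y(s) = Z_{\mathbf 1}(s)$. By density it suffices to consider $f$ supported in a compact subinterval of $(0,1)$.

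The $\sin$ shape emerges from a spectral analysis on a truncated process. Install an auxiliary absorbing barrier at $L^* := L(s) + h(x)$ with $h(x) = \log^2 x$; by Theorem~\ref{rtthm}, applied via a union bound over a dense grid of times in $[0,s]$, with probability $\geq 1 - \varepsilon$ no particle ever crosses $L^*$ on $[0,s]$, so working with the doubly absorbed process does not affect $\eta(u)$. On $(0, L^*)$ the single-particle generator $\tfrac12\partial_{zz} - \sqrt 2 \partial_z$ admits the Dirichlet eigenfunctions $\phi_k(z) = e^{\sqrt 2 z}\sin(k\pi z/L^*)$ with eigenvalues $-(1 + k^2\pi^2/(2L^{*2}))$, so the BBM expected-occupation operator $\tfrac12\partial_{zz} - \sqrt 2 \partial_z + 1$ has spectral gap of order $L(s)^{-2}$. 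For the truncated process starting from a single particle at $z$, set $u_g(r,z) := \E_z[\sum_j e^{\sqrt 2 X_j(r)} g(X_j(r))]$; a direct computation shows that the substitution $u_g = e^{\sqrt 2 z} v_g$ reduces the evolution to the standard heat equation $\partial_r v_g = \tfrac12 \partial_{zz} v_g$ on $(0, L^*)$ with Dirichlet conditions and $v_g(0, \cdot) = g$. Expanding in the sine basis yields
$$u_g(r,z) = e^{\sqrt 2 z}\sum_{k\ge 1} a_k(g)\, e^{-k^2\pi^2 r/(2L^{*2})}\sin(k\pi z/L^*),\qquad a_k(g) = \tfrac{2}{L^*}\int_0^{L^*} g(y)\sin(k\pi y/L^*)\, dy.$$
Since $s \asymp x^3$ while $L^{*2} \asymp x^2$, taking $r := s - s_0$ with $s_0 := Bx^2$ (the early time from Theorem~\ref{numthm}) gives $r \gg L^{*2}$, so only the $k=1$ mode survives. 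A short calculation yields $a_1(\mathbf 1) = 4/\pi$ and $a_1(f(\cdot/L(s))) \to 2\int_0^1 f(y)\sin(\pi y)\, dy$ as $x \to \infty$ (using $L(s)/L^* \to 1$), so the common factors $e^{\sqrt 2 z}\sin(\pi z/L^*)$ and $e^{-\pi^2 r/(2L^{*2})}$ cancel in the ratio and $u_f(r,z)/u_{\mathbf 1}(r,z) \to c_f$ uniformly on compact subsets of $(0, L^*)$.

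To upgrade this first-moment identity to convergence in probability, I would condition on $\cF_{s_0}$ and decompose $Z_f(s) = \sum_\alpha Z_f^{(\alpha)}$ and $Y(s) = \sum_\alpha Y^{(\alpha)}$ as sums over the conditionally independent subtrees rooted at the particles $V_\alpha$ alive at time $s_0$. The above gives $\E[Z_f^{(\alpha)}\mid V_\alpha] = c_f\, \E[Y^{(\alpha)}\mid V_\alpha] (1 + o(1))$; crucially, the subtree-dependent factor $e^{\sqrt 2 V_\alpha} \sin(\pi V_\alpha/L^*)$, which carries the large fluctuations between subtrees, is common to numerator and denominator and so cancels. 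The main obstacle is the second-moment step: one must bound the conditional variance $\mathrm{Var}(Z_f^{(\alpha)} - c_f Y^{(\alpha)}\mid V_\alpha)$ through a many-to-two computation (splitting according to the time of the most recent common ancestor of two sampled descendants), and use the spectral gap of order $L(s)^{-2}$ together with the large number of subtrees supplied by Theorem~\ref{numthm} to show that the aggregate variance is negligible compared to $\bigl(\sum_\alpha \E[Y^{(\alpha)}\mid V_\alpha]\bigr)^2$. This variance estimate is the principal technical difficulty.
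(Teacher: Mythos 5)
Your overall structure is heading in the right direction in one respect: the paper does compute conditional moments of weighted sums via the eigenfunction expansion of the killed-strip density (Lemma \ref{stripdensity}), and it too exploits the cancellation of the common ${\hat Z}$-factor between the numerator and denominator of the ratio. But there are several concrete gaps in the route you propose, two of them fatal.

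\textbf{The fixed barrier $L^*$ is too low.} With $s=ux^3$ and $0<u<\tau$ one has $L(s)=x(1-u/\tau)^{1/3}$, which is strictly less than $x$ by a constant factor, so $L^* = L(s)+\log^2 x < x$ for all large $x$. The initial particle sits at $x$, above $L^*$, and a positive proportion of its descendants remain above $L^*$ for a time of order $x^2$. Hence the claim that ``no particle ever crosses $L^*$ on $[0,s]$ with probability $\ge 1-\varepsilon$'' is false, and the doubly-absorbed process with barrier $L^*$ does \emph{not} approximate the true process. This is precisely why the paper insists on a \emph{decreasing} barrier $L(\cdot)$ (or $L_\alpha(\cdot)$) which starts near $x$ (Sections \ref{Zlow}--\ref{curvedbdry}), and why even that barrier is only switched on from time $\kappa x^2$ onward. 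For the moment computations the paper then freezes the barrier at $L(s)$ only over the short window $[r,s]$ with $r=s-Bx^2$, where $L(\cdot)$ changes by $O(1)$; Lemma \ref{fphi} shows that no particle reaches $L(s)$ in $[r,s]$, and this is proved by an Optional Sampling argument for the martingale $M$, not by a union bound over a time grid (which cannot control behavior between grid points).

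\textbf{Circularity with Theorem \ref{rtthm}.} You invoke Theorem \ref{rtthm} to bound the rightmost particle, but the paper's proof of Theorem \ref{rtthm} itself uses the slightly generalized version of Theorem \ref{config2} (via \eqref{phiprob}, with $s_n\sim ux_n^3$). So Theorem \ref{rtthm} cannot be an ingredient in the proof of Theorem \ref{config2}. The paper instead controls the rightmost particle by Lemma \ref{withina} and Lemma \ref{killalpha}, which are established independently.

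\textbf{Conditioning too early.} You condition on ${\cal F}_{s_0}$ with $s_0 = Bx^2$ and let the process run for $s-s_0 \asymp x^3$. Over that long interval the conditional variance of $Z(s)$ is not negligible relative to $\E[Z(s)]^2$ --- indeed Proposition \ref{VarZProp} gives a ratio that is bounded but of order one, not $o(1)$. The paper avoids this difficulty by conditioning on ${\cal F}_r$ with $r=s-Bx^2$, a \emph{late} time, where the conditional variance of $X'(\phi)$ is small compared to ${\hat Z}^2$ (Lemma \ref{varXphi}); the large fluctuations present in ${\hat Z}$ itself are handled separately via Propositions \ref{Zlower}, \ref{Zupper}, \ref{Yupper}, which bound ${\hat Z}$ and $Y(r)$ up to constants on events of high probability. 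Your idea that $\mathrm{Var}(Z_f-c_f Y)$ might be controllable because the first eigenmode cancels is not unreasonable in spirit, but it is not developed, and you would still need some substitute for Propositions \ref{Zlower}--\ref{Yupper} to rule out the denominator being atypically small. As written, this is the part you flag as ``the principal technical difficulty,'' but it is not the only one, and the ones above are more elementary and more immediately fatal.
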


\subsection{Ideas behind the proofs}

In this section, we briefly outline a few of the heuristics behind the main results and their proofs.
One can not obtain an accurate estimate of the number of particles $N(s)$ at time $s$ simply by calculating the expected value $E[N(s)]$, as the expected value is dominated by rare events when the number of particles is unusually large.  Instead, we use the method of truncation and obtain our estimates by calculating first and second moments for a process in which particles are killed if they get too far to the right.  It is useful to consider first branching Brownian motion with a drift of $-\sqrt{2}$ in which particles are killed when they reach either $0$ or $L$.  For this process, if we start with a single particle at $x$ and if $s$ is large, then the ``density" of particles near $y$ at time $s$ is approximately (see Lemma \ref{stripdensity} below)
\begin{equation}\label{psnew}
p_s(x,y) = \frac{2}{L} e^{-\pi^2 s/2L^2} e^{\sqrt{2} x} \sin \bigg( \frac{\pi x}{L} \bigg) e^{-\sqrt{2} y} \sin \bigg( \frac{\pi y}{L} \bigg).
\end{equation}
This formula indicates that in the long run, the particles settle into an equilibrium configuration in which the density of particles near $y$ is proportional to $e^{-\sqrt{2}y} \sin(\pi y/L)$.

We need to choose $L$ to be large enough that particles are unlikely to be killed at $L$, but small enough that we can obtain useful moment bounds.  This will require choosing $L$ to be near the position of the right-most particle.  However, because of the exponential decay term in (\ref{psnew}), the number of particles is decreasing over time, and therefore the position of the right-most particle will decrease also.  Consequently, it will be necessary to allow the position of the right boundary to decrease over time and consider a process in which particles are killed if they reach $L(s)$ at time $s$.

It turns out that the good choice for the killing boundary is given by $L(s)=c(t-s)^{1/3}$, as in (\ref{Ldef}) above.  The importance 
of this position was already recognized by Kesten \cite{kesten}, where it plays a key role in his analysis of the critical branching Brownian motion with absorption.  In \cite{bbs2}, we followed a similar strategy to study the survival probability of the same process and we showed that a particle that reaches $L(s)$ at some time $s<t$ has a good probability to have a descendent alive at time $t$. A heuristic derivation of the precise form of $L(s)$ is given in section \ref{L(s)section}

More precisely, we are able to show that the probability that a particle hits $L(s)$ during a fixed time interval of length $O(x^2)$ (see Lemma \ref{fphi}), or that a particle ever hits a barrier $L_\alpha(s) =L(s) +O(\alpha)$ (see Lemma \ref{killalpha})
is close to 0 for $x$ and $\alpha$ large enough. The upshot of these results is that we can now choose to kill particles at $L(s)$ during the appropriate time frame in addition to killing them at 0 at no additional cost. This allows us to use and refine previous estimates and results from \cite{bbs,bbs2}  for the branching Brownian motion with absorption at 0 and at a fixed $L>0$ 
(see section \ref{constbdrysec}) or at a curved boundary $L(s)$ as in \eqref{Ldef} (see section \ref{curvedbdry}).

We now describe more precisely the structure of the proof and how the different estimates are used to obtain the proofs of Theorems \ref{numthm}, \ref{config1}, and \ref{config2}. 



As can be seen from (\ref{psnew}), 
in the model with killing at 0 and $L(s)$,  
the number of particles that will be in a given set at a sufficiently large future time is well predicted by the quantity $$Z(s) = \sum_{i=1}^{N(s)} e^{\sqrt{2} X_i(s)} \sin \bigg( \frac{\pi X_i(s)}{L(s)} \bigg) {\bf 1}_{\{X_i(s) \leq L(s)\}}.$$  Therefore $Z(s)$ is the natural measure of the ``size" of the process at time $s$. 

Given a bounded continuous function $f: [0,\infty) \to \R$ let us consider the sum 
$\sum_{i=1}^{N(s)} f(X_i(s)). $  We observe that $N(s)$ is this sum with $f\equiv 1$ and that to prove Theorem \ref{config1}, it suffices to show that for $s$ of the form $ux^3$, we have
$$
\frac1{N(s)} \sum_{i=1}^{N(s)} f(X_i(s)) \to_p \int_0^\infty g(y)f(y) dy ,
$$
where $g(y) = 2y e^{-\sqrt{2} y}$ for $y \geq 0$ and $\rightarrow_p$ denotes convergence in probability as $x \rightarrow \infty$.
In Lemma \ref{fphi}, we show that if $r=s-Bx^2$, where $B$ is a large constant, then with probability close to 1, no particle ever reaches $L(u)$ for $u \in [r,s]$, and therefore we can kill particles there at no cost during $[r,s].$ 
We denote by $X(f)$ the sum $\sum_{i=1}^{N(s)} f(X_i(s))$ when we do kill the particles at $L(u)$ for $u \in [r,s]$. 

On an interval of length $O(x^2)$, the boundary $L(u)$ stays roughly constant (it changes at most by a constant) and we are therefore able to use estimates obtained for the model in which particles are killed at 0 and at a fixed point $L$. More precisely, Lemma \ref{meanXf}  uses the estimates in Lemma \ref{fXvar} and Lemma \ref{stripdensity} to show that 
$$
\E[X(f) |\cF_r] \cong Z(r) {\pi \over L(s)^2} e^{-\pi^2 Bx^2 /2L(s)^2} \int_0^\infty f(y) g(y) dy, 
$$
where for this heuristic description we can take $\cong$ to mean ``is close to''.
In the same spirit, Lemma \ref{varXf} gives an upper bound of $\text{Var}(X(f) |\cF_r)$ by a quantity involving $Y(r).$

Focusing on the proof of Theorem 1, that is taking $f\equiv 1$ above, we see that we can plug the bounds on $\E[X(f) |\cF_r]$ and $\text{Var}(X(f) |\cF_r)$ into Chebyshev's inequality to show that $X(1)$ is not too far from its conditional expectation, as shown in equation \eqref{cheb} which says that
\begin{equation*}
\P \bigg(\big|X(1) - \E[X(1)|{\cal F}_r] \big| > \frac{1}{2} \E[X(1)|{\cal F}_r] \bigg| {\cal F}_r \bigg) \leq \frac{C Y(r) e^{\sqrt{2} L(s)}}{x^{3/2} {\hat Z}^2},
\end{equation*}
where ${\hat Z} \cong Z(r)$.
Propositions \ref{Zlower} and \ref{Zupper} show that on an event of probability close to 1 we have good bounds on $Z(r)$ while the upper bound for $Y(r)$ is given in Proposition \ref {Yupper}. The conclusion is that on an event of probability close to 1, the quantity on the right-hand side tends to zero as $x \rightarrow \infty$, and on the same event we can bound $\E[X(1)|\cF_r]$ as above.  This allows us to obtain the conclusion of Theorem \ref{numthm} because $X(1)=N(s)$ when no particle is killed at $L(u)$ during $[r,s]$.   



Propositions \ref{Zlower} and \ref{Zupper} are themselves obtained by considering the model where particles are killed at $L_\alpha(s)$ in addition to being killed at 0 as explained above. Using this idea, Lemma \ref{EZbound} bounds $\E[Z(s)|\cF_r] /Z(r)$ between two functions of $r$ and $s$.
Since Propositions \ref{Zlower} and \ref{Zupper} essentially derive from another application of Chebyshev's inequality, the bound on the second moment of $Z(s)$ given in Proposition \ref{VarZProp}   is a crucial step. 

Theorem \ref{config1} is obtained through a similar careful application of Chebyshev's inequality. Theorem \ref{config2} follows the same principle with 
$$
\sum_{i=1}^{N(s)} e^{\sqrt{2} X_i(s)} \phi\bigg( \frac{X_i(s)}{L(s)} \bigg) {\bf 1}_{\{X_i(s) < L(s)\}}.
$$
in lieu of 
$$
\sum_{i=1}^{N(s)} f(X_i(s)).
$$


The proof of Theorem \ref{rtthm} uses a different technique because the moment bounds obtained in this paper are not sharp enough near the right boundary $L(s)$ to give such precise control over the position of the right-most particle.  Instead, to control the position of the right-most particle at time $s$, we consider the configuration of particles at time $s - \gamma x^2$, where $\gamma$ is a small constant.  We estimate, for each particle at time $s - \gamma x^2$, the probability that it will have a descendant particle to the right of $L(s) - (3/\sqrt{2}) \log x + d$ at time $s$.  To estimate this probability, we can use a result of Bramson \cite{bram83} for the position of the right-most particle in branching Brownian motion without absorption, because it is unlikely that a particle that gets to the right of $L(s) - (3/\sqrt{2}) \log x + d$ would have hit zero between times $s - \gamma x^2$ and $s$.  The logarithmic term that appears in Theorem \ref{rtthm} is therefore closely related to the logarithmic correction in the celebrated result of Bramson, which says that for branching Brownian motion without drift or absorption, the median position of the right-most particle at time $t$ is within a constant of $$\sqrt{2} t - \frac{3}{2 \sqrt{2}} \log t.$$

\subsection{Heuristic derivation of $L(s)$ }\label{L(s)section}

There is a natural approximate relationship, discussed in \cite{bbs}, between the number of particles $N(s)$ and the optimum position of the right boundary $L(s)$ given by
\begin{equation}\label{LNrelation}
L(s) = \frac{1}{\sqrt{2}} (\log N(s) + 3 \log \log N(s)).
\end{equation}
Equation (\ref{psnew}) indicates that once the process is in equilibrium, the number of particles gets multiplied by $e^{-\pi^2 s/2 L(s)^2}$ after time $s$, which suggests the rough approximation $$N'(s) \approx - \frac{\pi^2}{2 L(s)^2} N(s).$$  Because we have written $L(s)$ as a function of $N(s)$, this approximation can be combined with the chain rule to give
\begin{equation}\label{Ldiffeq}
L'(s) \approx - \frac{1}{\sqrt{2} N(s)} \cdot \frac{\pi^2}{2 L(s)^2} N(s) = - \frac{\pi^2}{2 \sqrt{2} L(s)^2}.
\end{equation}
Because we begin with a particle at $x$, the position of the right-most particle will be near $x$ at small times, so we will take $L(0) = x$ (although this means that we will not be able to start killing particles at the right boundary immediately).  Solving the differential equation (\ref{Ldiffeq}) with $L(0) = x$ gives us $L(s) = c (t - s)^{1/3}$, where $t = \tau x^3$, as in (\ref{Ldef}).  Combining this with (\ref{LNrelation}) gives $$N(s) \approx \frac{e^{\sqrt{2} L(s)}}{(\log N(s))^3} \approx \frac{e^{\sqrt{2} L(s)}}{L(s)^3},$$ where $\approx$ means that the expressions on each side should be the same order of magnitude.  Because $x$ and $L(s)$ are of the same order of magnitude when $s \in [Bx^2, (1 - \delta) t]$, this approximation suggests the result of Theorem \ref{numthm}.

\subsection{Discussion and open problems}

Beyond the general motivation recalled at the beginning of this introduction, we now explain some of the contexts in which these results might be applied, and some related open problems that are raised by them.

\emph{Yaglom limit laws}. Let $x>0$ be fixed, and consider a branching Brownian motion with the critical drift of $-\sqrt{2}$ and absorption at zero started from one particle at $x$.  Almost surely, the process becomes extinct.  However, one can condition the process on survival up to a large time $t$: it is then interesting to consider, for example, the number of particles still alive at time $t$ or the configuration of particles at some time $s \in (0, t]$.  We believe that the results in Theorems \ref{config1} and \ref{config2} may be relevant to this question as well.  Similar questions for ordinary branching processes were considered by Yaglom \cite{yaglom}.  Note that the results of \cite{bbs2} give estimates, up to a multiplicative constant, for the probability of survival up to time $t$.

\emph{Fleming-Viot processes.} Critical branching Brownian motion with absorption shares several features with the Fleming-Viot process studied by Burdzy et al. \cite{burdzy1, burdzy2}, in the case where the underlying motion is $(X_t,t\ge0)$, a Brownian motion with drift $-\sqrt{2}$ and absorption at 0. This is a process with $N$ particles performing Brownian motion with drift $-\sqrt{2}$ and which are absorbed at 0. Furthermore, whenever a particle is absorbed at 0, a new particle is instantaneously born, at a location chosen uniformly among the set of the other $N-1$ other particles. For this Fleming-Viot process, the main question concerns the equilibrium empirical distribution of particles $\mu_N$ and in particular the limiting behaviour of $\mu_N$ as $N$ tends to infinity. Under fairly general conditions $\mu_N$ is conjectured to converge weakly to a particular quasi-stationary distribution of the underlying motion $X$. This has been verified in the case where the underlying motion $X$ is Brownian motion and absorption occurs at the boundary of a bounded domain $D$ (\cite{burdzy2, grigorescu1, grigorescu2}) or if the state space is finite (\cite{AFG}). In all these cases the quasi stationary distribution is unique. Recently, this question was also addressed in \cite{AFGJ} in the case where the underlying motion is a subcritical branching process on $\mathbb{Z}$. For this motion there is a continuum of quasi-stationary distributions, and the conclusion of \cite{AFGJ} is that the limiting behaviour of $\mu_N$ is given by the minimal one, in the sense of minimal expected absorption time. This \emph{selection principle} is also conjectured to hold in great generality. 

We point out that the function $g(x) = 2x e^{- \sqrt{2} x} $, which arises in Theorem \ref{config1} as the weak limit of the density appearing in \eqref{density} (and before that in \cite{bbs}) is a left eigenfunction of the generator of $X$,
$$
\frac12 \frac{d^2 }{ dx^2} - \sqrt{2} \frac{d}{dx},
$$ 
with Dirichlet boundary conditions on $(0, \infty)$. It is easily verified that this implies that $g$ is a quasi-stationary distribution of $X$. It is in fact the minimal quasi-stationary distribution of $X$ on $(0, \infty)$ in the sense of \cite{AFGJ}. See \cite{ServetMartinez} for precise calculations and relation to the corresponding Yaglom problem for $X$. Hence its appearance in Theorem \ref{config1} adds support to the above mentioned conjecture. In fact, our work raises the question of whether the function $e^{-\sqrt{2}x } \sin(\pi x / L)$ provides an even better approximation for $\mu_N$ when $L = (1/\sqrt{2}) ( \log N + 3 \log \log N)$. More formally, we ask whether the analogue of Theorem \ref{config2} also holds for $\mu_N$. 

\emph{Extreme configurations.} Theorem \ref{rtthm} determines the position of the right-most particle at time $s$ to within an additive constant.  A natural open problem is to get a convergence in distribution for the position of the rightmost particle. More generally, one can ask about the distribution of the particle configuration as seen from the rightmost particle, or from the median position of the rightmost particle. We point out that these questions also make sense in the nearly-critical case studied in \cite{bbs}, and that the proof of Theorem \ref{rtthm} can be adapted to that setting.  The analogous question for a branching Brownian motion without absorption at zero has been settled in \cite{abbs, abk}.

\section{Preliminary Estimates}\label{stripsec}

In this section, we obtain or recall some preliminary estimates concerning branching Brownian motion in which particles are killed not only at the origin but also when they travel sufficiently far to the right.  We will consider two cases.  One is when the Brownian particles are killed at some level $L > 0$.  The other is when particles are killed when they reach $L(s) = c (t-s)^{1/3}$ for some $s$.

As before, let $N(s)$ be the number of particles at time $s$, and denote the positions of the particles at time $s$ by $X_1(s) \geq X_2(s) \geq \dots \geq X_{N(s)}(s)$.  Define $Y(s)$ as in (\ref{Ydef}).  Let $({\cal F}_s, s \geq 0)$ denote the natural filtration associated with the branching Brownian motion.  Let $q_s(x,y)$ denote the density of the branching Brownian motion, meaning that if initially there is a single particle at $x$ and $A$ is a Borel subset of $(0, \infty)$, then the expected number of particles in $A$ at time $s$ is $$\int_A q_s(x,y) \: dy.$$

Here, and throughout the entire paper, $C$, $C'$, and $C''$ will denote positive constants whose value may change from line to line.  Numbered positive constants of the form $C_k$ will not change their values from line to line.

\subsection{A constant right boundary}\label{constbdrysec}

Let $L > 0$.  We consider here the case in which particles are killed upon reaching either $0$ or $L$.  This case was studied in detail in \cite{bbs}.  The following result is Lemma 5 of \cite{bbs}.

\begin{Lemma}\label{stripdensity}
For $s > 0$ and $x,y \in (0, L)$, let $$p_s(x,y) = \frac{2}{L} e^{-\pi^2 s/2L^2} e^{\sqrt{2} x} \sin \bigg( \frac{\pi x}{L} \bigg) e^{-\sqrt{2} y} \sin \bigg( \frac{\pi y}{L} \bigg),$$ and define $D_s(x,y)$ so that $$q_s(x,y) = p_s(x,y)(1 + D_s(x,y)).$$  Then for all $x,y \in (0,L)$, we have
\begin{equation}\label{Dineq}
|D_s(x,y)| \leq \sum_{n=2}^{\infty} n^2 e^{-\pi^2(n^2 - 1)s/2L^2}.
\end{equation}
\end{Lemma}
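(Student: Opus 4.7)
My plan is to compute $q_s(x,y)$ explicitly as an eigenfunction series and read off both the leading term $p_s(x,y)$ and a bound on the tail. The two tools are the many-to-one lemma (which reduces the expected occupation density of the branching system to the transition density of a single killed particle, weighted by the mean population size $e^s$) and the classical Dirichlet eigenfunction expansion for Brownian motion on $(0,L)$.

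First I would apply the many-to-one identity to write $q_s(x,y) = e^s \, \tilde p_s(x,y)$, where $\tilde p_s(x,y)$ is the transition density of Brownian motion with drift $-\sqrt{2}$ started at $x$, killed upon hitting $0$ or $L$, and evaluated at $y$ at time $s$. A Girsanov (or direct Gaussian) computation gives
\[
\tilde p_s(x,y) = e^{\sqrt{2}(x-y) - s}\, p^0_s(x,y),
\]
where $p^0_s(x,y)$ is the transition density of \emph{driftless} Brownian motion on $(0,L)$ with Dirichlet boundary conditions. The factors $e^{-s}$ cancel, so it remains only to expand $p^0_s$.

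Next I would invoke the standard spectral expansion for the Dirichlet Laplacian on $(0,L)$: the eigenfunctions are $\varphi_n(x)=\sqrt{2/L}\,\sin(n\pi x/L)$ with eigenvalues $\lambda_n = n^2\pi^2/2L^2$, yielding
\[
p^0_s(x,y) = \frac{2}{L}\sum_{n=1}^{\infty} e^{-n^2\pi^2 s/2L^2}\, \sin\!\bigl(n\pi x/L\bigr)\sin\!\bigl(n\pi y/L\bigr).
\]
Combining this with the Girsanov factor, the $n=1$ term is exactly $p_s(x,y)$ as defined in the lemma, and
\[
D_s(x,y) \;=\; \sum_{n=2}^{\infty} e^{-\pi^2(n^2-1)s/2L^2}\, \frac{\sin(n\pi x/L)}{\sin(\pi x/L)} \cdot \frac{\sin(n\pi y/L)}{\sin(\pi y/L)}.
\]

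To finish, I would use the elementary inequality $|\sin(n\theta)| \leq n\,|\sin\theta|$, valid for every integer $n\ge 1$ and every real $\theta$; it is immediate by induction using $\sin((n+1)\theta) = \sin(n\theta)\cos\theta + \cos(n\theta)\sin\theta$. Applying this with $\theta = \pi x/L$ and $\theta = \pi y/L$ bounds each ratio in absolute value by $n$, giving $|D_s(x,y)| \leq \sum_{n=2}^{\infty} n^2 e^{-\pi^2(n^2-1)s/2L^2}$, which is exactly \eqref{Dineq}. There is no real obstacle here: the only mildly delicate point is keeping the Girsanov factor straight and noting that the $e^s$ from branching cancels the $e^{-s}$ coming from $e^{-\mu^2 s/2}$ with $\mu = -\sqrt{2}$, which is precisely the critical drift that makes this cancellation happen.
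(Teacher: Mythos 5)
Your argument is correct, and it is essentially the standard derivation: many-to-one to reduce $q_s$ to a single-particle density, the Girsanov factor $e^{\sqrt{2}(x-y)-s}$ to strip the drift (whose $e^{-s}$ exactly cancels the $e^{s}$ branching factor at the critical drift), the Dirichlet eigenfunction expansion of the heat kernel on $(0,L)$, and the elementary bound $|\sin(n\theta)|\le n|\sin\theta|$ applied twice to control the tail. The paper itself does not reprove this result (it cites Lemma 5 of [bbs]), but that reference proceeds by exactly this spectral expansion, so your route matches the intended one. The only thing worth making explicit in a polished write-up is the precise statement of the many-to-one identity for the killed process (the killing event is part of the path set, so it passes through the identity without issue), but as a plan this is complete.
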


Lemma \ref{stripdensity} allows us to approximate $q_s(x,y)$ by $p_s(x,y)$ when $s$ is sufficiently large.  Lemma \ref{mainqslem} below collects some further results about the density $q_s(x,y)$.

\begin{Lemma} \label{mainqslem}
Fix a positive constant $b > 0$.  There exists a constant $C$ (depending on $b$) such that for all $s$ such that $s \geq b L^2$, we have
\begin{equation}\label{q1}
q_s(x,y) \leq C p_s(x,y), \, \, \forall x,y \in [0,L]
\end{equation}
and for all $s$ such that $s \leq b L^2$, we have
\begin{equation}\label{q2}
q_s(x,y) \leq \frac{C L^3}{s^{3/2}} p_s(x,y), \, \, \forall x,y \in [0,L].
\end{equation}
The following inequalities hold in general (for all $s>0$ and $x,y \in [0,L]$):
\begin{align}
q_s(x,y) &\leq \frac{C e^{\sqrt{2}(x - y)} e^{-(x - y)^2/2s}}{s^{1/2}} \label{q3} \\
\int_0^L q_s(x,y) \: dy &\leq e^s \label{q4} \\
\int_0^{\infty} q_s(x,y) \: ds &\leq \frac{2 e^{\sqrt{2}(x-y)} x (L - y)}{L} \label{q5} \\
\int_0^L e^{\sqrt{2} y} q_s(x,y) \: dy &\leq e^{\sqrt{2} x} \min \bigg\{1, \frac{L-x}{s^{1/2}} \bigg\} \label{q6}
\end{align}
\end{Lemma}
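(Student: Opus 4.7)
The plan is to reduce every inequality to a standard fact about one-dimensional Brownian motion. The common starting point is the identity
$$q_s(x,y) = e^{\sqrt{2}(x-y)}\, \hat{p}_s(x,y),$$
where $\hat{p}_s(x,y)$ is the transition density of standard Brownian motion on $(0,L)$ killed at both endpoints. This follows from the many-to-one lemma (which contributes the factor $e^s$ to account for branching) combined with a Girsanov change of measure removing the $-\sqrt{2}$ drift (which contributes $e^{-\sqrt{2}(y-x) - s}$).

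For (\ref{q1}) and (\ref{q2}) I would invoke Lemma \ref{stripdensity}: $q_s(x,y) = p_s(x,y)(1 + D_s(x,y))$ with $|D_s(x,y)| \leq \sum_{n \geq 2} n^2 e^{-\pi^2(n^2-1)s/2L^2}$. When $s \geq bL^2$, each exponent exceeds $\pi^2 b(n^2-1)/2$, so the series is bounded by a constant depending only on $b$, proving (\ref{q1}). When $s \leq bL^2$, I would compare the sum to the Gaussian integral $\int_0^\infty u^2 e^{-\pi^2 u^2 s/2L^2}\,du = \tfrac{\sqrt{\pi}}{4}(2L^2/\pi^2 s)^{3/2}$, which is of order $L^3/s^{3/2}$ and dominates the discrete sum up to a factor $e^{\pi^2 s/2L^2} \leq e^{\pi^2 b/2}$. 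Since $s \leq bL^2$ forces $L^3/s^{3/2} \geq b^{-3/2}$, the additive $1$ in $1+|D_s|$ is absorbed into the same order, yielding (\ref{q2}).

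For (\ref{q3}), the killed Brownian density is dominated by the free Gaussian $\hat{p}_s(x,y) \leq (2\pi s)^{-1/2} e^{-(x-y)^2/2s}$, and multiplying by $e^{\sqrt{2}(x-y)}$ gives the claim. For (\ref{q4}), $\int_0^L q_s(x,y)\,dy$ is the expected number of unabsorbed particles at time $s$; since branching occurs at rate $1$, this is at most $e^s$, the expected population size with no absorption at all. For (\ref{q5}), the Girsanov identity gives $\int_0^\infty q_s(x,y)\,ds = e^{\sqrt{2}(x-y)} G(x,y)$, where $G$ is the Green's function of standard Brownian motion on $(0,L)$ with Dirichlet boundary data. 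Solving $-\tfrac{1}{2} G_{xx} = \delta_y$ yields $G(x,y) = 2\min(x,y)(L - \max(x,y))/L$, and a case check gives $\min(x,y)(L - \max(x,y)) \leq x(L-y)$. For (\ref{q6}),
$$\int_0^L e^{\sqrt{2}y} q_s(x,y)\,dy = e^{\sqrt{2}x} \int_0^L \hat{p}_s(x,y)\,dy = e^{\sqrt{2}x}\, \P_x(T_0 \wedge T_L > s);$$
one bounds the survival probability by $1$ on the one hand and by $\P_x(T_L > s) \leq (L-x)\sqrt{2/(\pi s)}$ on the other, using the reflection-principle estimate $2\Phi(a)-1 \leq a\sqrt{2/\pi}$.

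The main technical obstacle is the bookkeeping in (\ref{q2}): confirming that the Gaussian-integral comparison really yields the $L^3/s^{3/2}$ scaling after extracting the $e^{\pi^2 s/2L^2}$ prefactor, and checking that the degradation of the bound near the lateral boundaries (where $\sin(\pi x/L)$ or $\sin(\pi y/L)$ is small) is still compatible with the multiplicative form of the estimate. All the other assertions are essentially textbook Brownian motion identities, so once the identity $q_s(x,y) = e^{\sqrt{2}(x-y)}\hat{p}_s(x,y)$ is set up, the remaining work is routine algebraic verification.
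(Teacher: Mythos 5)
Your proposal is correct and follows essentially the same route as the paper: the central identity $q_s = e^{\sqrt{2}(x-y)}\hat p_s$ (equation (28) of \cite{bbs}), the eigenfunction expansion behind Lemma~\ref{stripdensity} for \eqref{q1}--\eqref{q2}, the free Gaussian bound for \eqref{q3}, the Many-to-One bound for \eqref{q4}, the Green's function on the strip for \eqref{q5}, and the reflection principle for \eqref{q6}. The paper simply cites \cite{bbs} for \eqref{q2}, \eqref{q3}, \eqref{q5}, whereas you supply the arguments; your explicit Green's function computation and case check $\min(x,y)(L-\max(x,y))\le x(L-y)$ is exactly the content hidden behind the citation, and your $(L-x)\sqrt{2/\pi s}\le (L-x)/\sqrt{s}$ estimate for \eqref{q6} matches the paper's displayed computation.

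For \eqref{q2} the paper's cited argument bounds the tail of $\sum_{n\ge2}n^2 e^{-\pi^2(n^2-1)s/2L^2}$ by grouping indices into blocks of length roughly $L/\sqrt{s}$; your Gaussian-integral comparison (integral plus one term at the unimodal max, which is $O(L^2/s)$ and hence subdominant to $(L^2/s)^{3/2}$ when $s\le bL^2$) produces the same $L^3/s^{3/2}$ order, and your observation that $1\le b^{3/2}L^3/s^{3/2}$ absorbs the leading $1$ is the right bookkeeping. One point worth noting: your closing worry about ``degradation near the lateral boundaries'' is not actually an obstacle. Lemma~\ref{stripdensity} already gives the multiplicative form $q_s=p_s(1+D_s)$ with a bound on $|D_s|$ that is \emph{uniform} in $x,y$, precisely because $|\sin(n\theta)/\sin\theta|\le n$; so the ratio $q_s/p_s$ is controlled independently of how small $\sin(\pi x/L)$ or $\sin(\pi y/L)$ becomes, and no separate boundary analysis is needed.
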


\begin{proof}
Equation (\ref{q1}) holds because the right-hand side of (\ref{Dineq}) is bounded by a constant when $s/L^2 \geq b$.  The result (\ref{q2}) is established in the proof of Proposition 14 in \cite{bbs} (see the argument between equations (53) and (54) of \cite{bbs}) by breaking the sum on the right-hand side of (\ref{Dineq}) into blocks of size approximately $L/\sqrt{s}$.  Equation (\ref{q3}) is equation (55) of \cite{bbs} and is obtained by comparing $q_s(x,y)$ to the density of standard Brownian motion at time $s$.  Equation (\ref{q4}) follows from the fact that the expected number of particles at time $s$ is at most $e^s$ because branching occurs at rate $1$.  Equation (\ref{q5}) follows from (28) and (51) of \cite{bbs} and is proved using Green's function estimates for Brownian motion in a strip.

Finally, to prove (\ref{q6}), let $v_s(x,y)$ be the density of Brownian motion killed at $0$ and $L$, meaning that if $A$ is a Borel subset of $(0,L)$, then the probability that a Brownian motion started at $x$ is in $A$ at time $s$ and has not hit $0$ or $L$ before time $s$ is $\int_A v_s(x,y) \: dy$.  By equation (28) of \cite{bbs}, we have
\begin{equation}\label{111}
q_s(x,y) = e^{\sqrt{2}(x-y)} v_s(x,y).
\end{equation}
Let $(B(t), t \geq 0)$ be standard Brownian motion with $B(0) = x$.  Then, by the Reflection Principle,
\begin{align}\label{112}
\int_0^L v_s(x,y) \: dy &= \P(B(t) \in (0, L) \mbox{ for all }t \in [0,s]) \nonumber \\
&\leq \P \big( \max_{0 \leq t \leq s} B(t) \leq L \big) \nonumber \\
&= 2 \int_0^{L-x} \frac{1}{\sqrt{2 \pi s}} e^{-y^2/2s} \: dy \nonumber \\
&\leq \min \bigg\{1, \frac{L-x}{s^{1/2}} \bigg\},
\end{align}
and (\ref{q6}) follows from (\ref{111}) and (\ref{112}).
\end{proof}

Let $$Z(s) = \sum_{i=1}^{N(s)} e^{\sqrt{2} X_i(s)} \sin \bigg( \frac{\pi X_i(s)}{L} \bigg).$$  Lemma \ref{YZexplem} below is part of Lemma 7 of \cite{bbs}, while Lemma \ref{varZlem} below follows immediately from Lemma 9 of \cite{bbs}.

\begin{Lemma}\label{YZexplem}
For all initial configurations of particles at time zero,
we have
\begin{equation}\label{Zexp}
\E[Z(s)] = e^{-\pi^2 s/2L^2} Z(0)
\end{equation}
and
\begin{equation}\label{Yexp}
\E[Y(s)] = \frac{4}{\pi} e^{-\pi^2s/2L^2} Z(0)(1 + D(s)),
\end{equation}
where $|D(s)|$ is bounded above by the right-hand side of (\ref{Dineq}).
\end{Lemma}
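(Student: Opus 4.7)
The plan is to reduce both identities to deterministic integrals against the density $q_s(x,y)$ via the many-to-one lemma, and then handle (\ref{Zexp}) by an eigenfunction argument and (\ref{Yexp}) by the decomposition $q_s = p_s(1+D_s)$ supplied by Lemma \ref{stripdensity}. By linearity of expectation combined with the branching/Markov property applied at time $0$, it suffices to prove both identities when the initial configuration is a single particle at some $x \in (0,L)$; the general statements then follow by summing over the initial particles. In the single-particle case, many-to-one yields
$$\E_x[Z(s)] = \int_0^L e^{\sqrt{2} y}\sin(\pi y/L)\, q_s(x,y)\, dy, \qquad \E_x[Y(s)] = \int_0^L e^{\sqrt{2} y}\, q_s(x,y)\, dy.$$

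For (\ref{Zexp}) I would observe that $F(x,s) := \E_x[Z(s)]$ solves the Feynman--Kac backward equation $\partial_s F = \tfrac{1}{2}\partial_{xx} F - \sqrt{2}\,\partial_x F + F$ on $(0,L)$ with Dirichlet boundary conditions at $0$ and $L$ and initial datum $F(x,0) = e^{\sqrt{2} x}\sin(\pi x/L) = Z(0)$. A direct computation shows that this initial datum is an eigenfunction of the right-hand side operator with eigenvalue $-\pi^2/2L^2$, so separation of variables immediately gives $F(x,s) = e^{-\pi^2 s/2L^2}\,e^{\sqrt{2} x}\sin(\pi x/L)$. Equivalently, one can expand $q_s(x,y)$ in the complete set of sine modes $\{\sin(n\pi y/L)\}_{n\ge 1}$ (this expansion is what underlies Lemma \ref{stripdensity}) and invoke orthogonality to kill all modes $n\ge 2$, leaving only the principal term $p_s(x,y)$ weighted by $\int_0^L \sin^2(\pi y/L)\,dy = L/2$; this gives the same answer.

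For (\ref{Yexp}) the point is that when we integrate $e^{\sqrt{2} y}$ against $q_s(x,y)$ every sine mode contributes, so an error term is genuinely needed. Here I would use $q_s = p_s(1+D_s)$ from Lemma \ref{stripdensity} directly. The main term is
$$\int_0^L e^{\sqrt{2} y} p_s(x,y)\, dy = \frac{2}{L}\,e^{-\pi^2 s/2L^2}\,e^{\sqrt{2} x}\sin(\pi x/L)\int_0^L \sin(\pi y/L)\, dy = \frac{4}{\pi}\,e^{-\pi^2 s/2L^2}\,Z(0),$$
since $\int_0^L \sin(\pi y/L)\,dy = 2L/\pi$. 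Defining $D(s)$ so that the residual contribution equals $\tfrac{4}{\pi} e^{-\pi^2 s/2L^2} Z(0)\cdot D(s)$, I would pull the pointwise bound (\ref{Dineq}) on $|D_s(x,y)|$ out of the integral and divide by the main term; since this bound is independent of $x$ and $y$, I obtain $|D(s)| \le \sum_{n\ge 2} n^2 e^{-\pi^2(n^2-1)s/2L^2}$, matching the right-hand side of (\ref{Dineq}).

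The routine verifications are the eigenfunction identity $(\tfrac{1}{2}\partial_{xx} - \sqrt{2}\,\partial_x + 1)[e^{\sqrt{2} x}\sin(\pi x/L)] = -\tfrac{\pi^2}{2L^2}\,e^{\sqrt{2} x}\sin(\pi x/L)$ and the two elementary sine integrals. There is no real conceptual obstacle here: Lemma \ref{stripdensity} already packages the nontrivial analytic information about $q_s$, and the only subtle point is that (\ref{Zexp}) is exact while (\ref{Yexp}) is only approximate -- this reflects the fact that $\sin(\pi y/L)$ is orthogonal to all higher eigenfunctions of the adjoint problem, whereas the constant function $1$ is not.
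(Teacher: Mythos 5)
Your proof is correct, and it uses precisely the toolkit that the surrounding text makes available: the paper itself does not prove this lemma but cites Lemma 7 of \cite{bbs}, whose proof rests on the same two ingredients you invoke, namely the many-to-one first-moment formula (recorded here as the first display in Lemma \ref{fXvar}, attributed to Sawyer) and the spectral decomposition $q_s = p_s(1+D_s)$ of Lemma \ref{stripdensity}. Your two alternative derivations of \eqref{Zexp} (eigenfunction for the Feynman--Kac generator, versus orthogonality of the sine modes $\sin(n\pi y/L)$ against $\sin(\pi y/L)$) are two views of the same fact and both check out; the eigenvalue computation giving $-\pi^2/2L^2$ is correct. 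The treatment of \eqref{Yexp} is also right, and the key step of pulling the sup bound on $|D_s(x,y)|$ out of the integral is legitimate because $e^{\sqrt{2}y}p_s(x,y)\ge 0$ on $(0,L)$.

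One small point worth making explicit: when you pass from a single initial particle to a general initial configuration, the claim about $|D(s)|$ for \eqref{Yexp} requires that the resulting $D(s)$ be a weighted average of the single-particle error terms with nonnegative weights $e^{\sqrt{2}x_i}\sin(\pi x_i/L)$; since these weights are indeed nonnegative for $x_i\in(0,L)$, the uniform bound from \eqref{Dineq} transfers. You gesture at the summation but do not spell out this positivity; it is the only place where linearity alone is not quite enough, so it deserves a sentence.
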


\begin{Lemma}\label{varZlem}
Fix a constant $b > 0$.  Suppose initially there is a single particle at $x$.  Then there exists a positive constant $C$, depending on $b$ but not on $L$ or $x$, such that for all $s \geq bL^2$, $$\E[Z(s)^2] \leq \frac{C e^{\sqrt{2} x} e^{\sqrt{2} L} s}{L^4}.$$
\end{Lemma}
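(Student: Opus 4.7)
The plan is to apply the many-to-two formula to the additive functional $Z(s) = \sum_i F(X_i(s))$ with $F(y) = e^{\sqrt{2}y}\sin(\pi y/L)$. Writing $Z(s)^2$ as the sum over all ordered pairs of particles alive at time $s$ and decomposing each off-diagonal pair $(i,j)$ by its last common ancestor --- which branches into exactly two children at some time $u \in [0,s]$ and position $z \in (0,L)$, whose mean density is $q_u(x,z)$, and which contributes a combinatorial factor of $2$ from the two ways of assigning the two children to the $i$- and $j$-subtrees --- one obtains
\[
\E_x[Z(s)^2] = \int_0^L q_s(x,y) F(y)^2\,dy + 2\int_0^s \int_0^L q_u(x,z)\bigl(\E_z[Z(s-u)]\bigr)^2\,dz\,du.
\]
Lemma~\ref{YZexplem} identifies $\E_z[Z(s-u)] = e^{-\pi^2(s-u)/(2L^2)} F(z)$, so the off-diagonal integrand becomes $e^{-\pi^2(s-u)/L^2}\,q_u(x,z) F(z)^2$.

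Both integrals are then controlled using the $q$-kernel estimates in Lemma~\ref{mainqslem}, splitting the off-diagonal time integral at $u = bL^2$. The common building block is $\int_0^L e^{\sqrt{2}z}\sin^3(\pi z/L)\,dz$; the substitution $w = L-z$ converts it to $e^{\sqrt{2}L}\int_0^L e^{-\sqrt{2}w}\sin^3(\pi w/L)\,dw$, which is of order $e^{\sqrt{2}L}/L^3$ because the cubic vanishing $\sin(\pi w/L) \leq \pi w/L$ combined with the exponential decay localizes the integrand to a boundary layer of thickness $O(1)$ near $w = 0$. For the diagonal term and for $u \in [bL^2, s]$ I use $q_u \leq C p_u$ from (\ref{q1}); multiplying by the $L^{-1}$ prefactor of $p_u$ and performing the time integration $\int_{bL^2}^s e^{-\pi^2(s-u)/L^2 - \pi^2 u/(2L^2)}\,du \leq CL^2$ (via the algebraic identity $-\pi^2(s-u)/L^2 - \pi^2 u/(2L^2) = -\pi^2 s/L^2 + \pi^2 u/(2L^2)$) yields contributions of order $Ce^{\sqrt{2}x}e^{\sqrt{2}L}/L^2$.

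The delicate part is the short-time range $u \in [0, bL^2]$, where the kernel is not yet in equilibrium: the naive short-time bound (\ref{q2}), $q_u \leq CL^3 u^{-3/2} p_u$, has a non-integrable singularity at $u = 0$. I would handle this by combining (\ref{q2}) for intermediate $u$ with (\ref{q6}) (applied after the crude bound $F(z)^2 \leq e^{\sqrt{2}L} e^{\sqrt{2}z}$) for the smallest $u$, using throughout the uniform bound $e^{-\pi^2(s-u)/L^2} \leq e^{-\pi^2(s-bL^2)/L^2}$. The careful bookkeeping --- which is essentially the content of Lemma~9 of \cite{bbs} --- yields a short-time contribution again of order $Ce^{\sqrt{2}x}e^{\sqrt{2}L}/L^2$. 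Summing all pieces gives $\E_x[Z(s)^2] \leq Ce^{\sqrt{2}x}e^{\sqrt{2}L}/L^2$; the hypothesis $s \geq bL^2$ converts this to $\E_x[Z(s)^2] \leq C'e^{\sqrt{2}x}e^{\sqrt{2}L}\,s/L^4$ via $1/L^2 \leq s/(bL^4)$, which is the claimed bound. The principal obstacle is precisely the short-time off-diagonal estimate, where one must exploit simultaneously the non-equilibrium kernel structure and the boundary-layer localization of the sine-cube weight to recover the correct $L^{-4}$ polynomial order.
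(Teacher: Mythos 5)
Your many-to-two decomposition, identification of $\E_z[Z(s-u)] = e^{-\pi^2(s-u)/2L^2}F(z)$, diagonal bound, and long-time ($u\geq bL^2$) off-diagonal bound via (\ref{q1}) are all correct, as is the final conversion $1/L^2\leq s/(bL^4)$. However, the proposed treatment of the short-time range $u\in[0,bL^2]$ does not close, and the plan you sketch cannot be made to work: (\ref{q2}) is a pointwise kernel bound that completely loses the $O(\sqrt{u})$ localization of $q_u(x,\cdot)$, so inserting it into $\int q_u F^2$ and integrating gives $\int_{u_0}^{bL^2} C e^{\sqrt{2}x}e^{\sqrt{2}L}\,L^{-1}u^{-3/2}\,du\asymp e^{\sqrt{2}x}e^{\sqrt{2}L}/(Lu_0^{1/2})$, which only matches the target $e^{\sqrt{2}x}e^{\sqrt{2}L}/L^2$ when $u_0\gtrsim L^2$, i.e.\ when the intermediate window collapses. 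For the remaining very-short-time piece, (\ref{q6}) after the crude bound $F(z)^2\leq e^{\sqrt{2}L}e^{\sqrt{2}z}$ gives a contribution $\lesssim u_0\,e^{\sqrt{2}x}e^{\sqrt{2}L}$ (the min with $(L-x)/u^{1/2}$ cannot be exploited uniformly in $x$), and this is off by a factor $L^4$ for any $u_0$ of order larger than $L^{-2}$. So no choice of $u_0$ makes both pieces the right size, and the factor $e^{-\pi^2(s-u)/L^2}\leq e^{-\pi^2(s-bL^2)/L^2}$ is a bounded constant for $s\asymp bL^2$, hence gives no rescue.

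The missing ingredient is the Green's-function estimate (\ref{q5}), which you did not invoke. It handles the \emph{entire} off-diagonal term at once, with no split in $u$ at all: dropping $e^{-\pi^2(s-u)/L^2}\leq 1$ and extending the time integral to $[0,\infty)$,
\[
2\int_0^s \!\!\int_0^L q_u(x,z)\bigl(\E_z[Z(s-u)]\bigr)^2 dz\,du
\;\leq\; 2\int_0^L F(z)^2\!\int_0^\infty\! q_u(x,z)\,du\,dz
\;\leq\; \frac{4xe^{\sqrt{2}x}}{L}\int_0^L e^{\sqrt{2}z}\sin^2\!\Bigl(\tfrac{\pi z}{L}\Bigr)(L-z)\,dz,
\]
and the substitution $w=L-z$ together with $\sin(\pi w/L)\leq\pi w/L$ gives $\int_0^L e^{\sqrt{2}z}\sin^2(\pi z/L)(L-z)\,dz\leq (3\pi^2/2)\,e^{\sqrt{2}L}/L^2$, hence off-diagonal $\leq C\,xe^{\sqrt{2}x}e^{\sqrt{2}L}/L^3\leq Ce^{\sqrt{2}x}e^{\sqrt{2}L}/L^2\leq Ce^{\sqrt{2}x}e^{\sqrt{2}L}s/(bL^4)$. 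The crucial point is that (\ref{q5}) is a bound on the \emph{resolvent} $\int_0^\infty q_u\,du$; it retains the Brownian localization in $z$ that is washed out by (\ref{q2}), and it is exactly what makes the short-time regime tractable. So your short-time plan should be replaced by this one line, and in fact with (\ref{q5}) the split at $u=bL^2$ is unnecessary for the off-diagonal term altogether.
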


The next lemma is Lemma 8 in \cite{bbs}, where it is obtained as a straightforward application of results in \cite{sawyer}.  It is also similar to Lemma 3.1 of \cite{kesten}.

\begin{Lemma}\label{fXvar}
Suppose $f:(0, L) \rightarrow [0, \infty)$ is a bounded measurable function.  Suppose initially there is a single particle at $x$.  Then $$\E \bigg[ \sum_{i=1}^{N(s)} f(X_i(s)) \bigg] = \int_0^L f(y) q_s(x,y) \: dy$$ and
$$\E \bigg[ \bigg( \sum_{i=1}^{N(s)} f(X_i(s)) \bigg)^2 \bigg] = \int_0^L f(y)^2 q_s(x,y) \: dy + 2 \int_0^s \int_0^L q_u(x,z) \bigg( \int_0^L f(y) q_{s-u}(z,y) \: dy \bigg)^2 \: dz \: du.$$
\end{Lemma}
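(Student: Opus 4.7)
The plan is to establish the two formulas by the standard ``many-to-one'' and ``many-to-two'' identities for branching Brownian motion with absorption at $0$ and $L$.

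For the first formula, I would simply invoke linearity of expectation together with the defining property of $q_s(x, \cdot)$: since $q_s(x, y) \, dy$ is by definition the mean measure of $\sum_{i} \delta_{X_i(s)}$ on $(0, L)$, integrating the bounded test function $f$ against this density gives the identity at once.

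For the second formula, the first step is to expand the square into diagonal and off-diagonal pieces,
$$\bigg( \sum_{i=1}^{N(s)} f(X_i(s)) \bigg)^{\!2} = \sum_{i=1}^{N(s)} f(X_i(s))^2 + \sum_{i \neq j} f(X_i(s)) f(X_j(s)).$$
The diagonal sum is handled by applying the first-moment identity to $f^2$, which produces $\int_0^L f(y)^2 q_s(x,y) \, dy$. For the off-diagonal sum, the key step is a \emph{most-recent-common-ancestor} decomposition: every ordered pair of distinct particles alive at time $s$ has a unique common ancestor, which splits at some time $u \in [0, s]$ and some position $z \in (0, L)$, and after that split the two lines of descent evolve as independent (killed) branching Brownian motions started from a single particle at $z$ and run for time $s - u$. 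Since branching occurs at rate $1$ per particle, the intensity of such split events at $(u, z)$ is $q_u(x, z) \, dz \, du$. Conditional on a split at $(u, z)$, by independence of the two subtrees and the first-moment formula, the expected value of $f(X_i(s)) f(X_j(s))$ summed over ordered pairs of descendants (one from each child) equals $2 \bigl( \int_0^L f(y) q_{s-u}(z, y) \, dy \bigr)^{\!2}$, the factor $2$ accounting for the two possible orderings. Integrating against the intensity $q_u(x, z) \, dz \, du$ produces exactly the double integral in the statement.

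The main obstacle is justifying the pair-decomposition rigorously, since one must simultaneously handle the Poisson intensity of branching events and the conditional independence of the two subtrees in a way that is consistent with the absorption at $0$ and $L$ (so that both the intensity $q_u(x, z)$ and the subtree laws already encode the killing). The cleanest route is to invoke the Palm-type ``many-to-two'' formulas for general branching processes proved in \cite{sawyer}, as suggested in the paper. An alternative is to induct on the number of branching events in $[0, s]$, using the branching property at the first split to derive a renewal-type equation for $\E[(\sum_i f(X_i(s)))^2]$, and then verifying that the formula in the statement solves it.
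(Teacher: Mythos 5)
Your proposal is correct, and it takes essentially the same route as the paper: the paper gives no proof of this lemma but simply cites Lemma~8 of \cite{bbs} (itself a direct consequence of the many-to-two formulas in \cite{sawyer}) and compares it to Lemma~3.1 of \cite{kesten}, which is exactly the machinery you invoke. Your MRCA decomposition correctly explains why the double integral arises (branching intensity $q_u(x,z)\,dz\,du$, conditional independence of the two subtrees, and the factor $2$ from ordered pairs), so it would serve as a self-contained version of the argument the paper outsources to the references.
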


\subsection{A curved right boundary}\label{curvedbdry}

Fix any time $t > 0$.  As in (\ref{Ldef}), for $s \in [0, t]$, let $$L(s) = c (t-s)^{1/3},$$ where $c$ was defined in (\ref{cdef}).  Consider branching Brownian motion with drift $-\sqrt{2}$ in which particles are killed if they reach zero, or if they reach $L(s)$ at time $s$.  Note that all particles must be killed by time $t$ because $L(t) = 0$.  This right boundary was previously considered by Kesten \cite{kesten}.  We recall here some results that recently appeared in \cite{bbs2}, where they were proved using techniques developed by Harris and Roberts \cite{haro}.

Let $$Z(s) = \sum_{i=1}^{N(s)} e^{\sqrt{2} X_i(s)} \sin \bigg( \frac{\pi X_i(s)}{L(s)} \bigg),$$ a quantity of crucial importance in what follows.  The next result, which combines Proposition 10 and Corollary 11 of \cite{bbs2}, provides a precise estimate of $\E[Z(s)]$.

\begin{Lemma}\label{EZbound}
For $0 < r < s < t$, let
\begin{equation}\label{Gdef}
G_r(s) = \exp \bigg( - (3 \pi^2)^{1/3} \big((t-r)^{1/3} - (t-s)^{1/3} \big) \bigg) \bigg( \frac{t-s}{t-r} \bigg)^{1/6}.
\end{equation}
There exist positive constants $C_3$ and $C_4$ such that if $0 < s < t$, then
$$Z(0) G_0(s) \exp(- C_3 (t-s)^{-1/3})  \le \E[Z(s)] \le Z(0) G_0(s) \exp(C_4 (t-s)^{-1/3})$$ and, more generally, if $0 < r < s < t$, then $$Z(r) G_r(s) \exp(-C_3 (t - s)^{-1/3}) \leq \E[Z(s)|{\cal F}_r] \leq Z(r) G_r(s) \exp(C_4(t - s)^{-1/3}).$$
\end{Lemma}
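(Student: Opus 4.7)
The plan is to follow the approach used to prove Proposition~10 of~\cite{bbs2}, combining a many-to-one/Feynman--Kac reduction with an adiabatic eigenfunction analysis in the style of Harris--Roberts~\cite{haro}. By the branching Markov property applied at time $r$, the quantity $\E[Z(s)\mid\cF_r]$ decomposes as a sum, over the particles alive at time~$r$, of quantities of the form $\E^{x,r}[Z(s)]$ in which the process is restarted from a single particle at position $x=X_i(r)$ at time~$r$. It therefore suffices to prove two-sided multiplicative bounds on this single-particle expectation and then sum against $e^{\sqrt{2}X_i(r)}\sin(\pi X_i(r)/L(r))$. The many-to-one lemma followed by a Girsanov change of measure that removes both the drift $-\sqrt{2}$ and the factor $e^{\sqrt{2}X}$ reduces the expectation to a single driftless Brownian computation:
\begin{equation*}
\E^{x,r}[Z(s)] \;=\; e^{\sqrt{2}x}\,\Phi(x,r;s), \qquad \Phi(x,r;s) \;:=\; E^{x,r}\!\left[\sin\!\bigl(\pi B_s/L(s)\bigr)\,\mathbf{1}\{B_u\in(0,L(u))\ \forall u\in[r,s]\}\right],
\end{equation*}
where $(B_u)_{u\ge r}$ is a standard Brownian motion started at $x$ at time~$r$. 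The whole problem is thus reduced to two-sided bounds on $\Phi$.

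The key step is to show that
\begin{equation*}
\sin\!\bigl(\pi x/L(r)\bigr)\,G_r(s)\,\exp\!\bigl(-C_3(t-s)^{-1/3}\bigr) \;\le\; \Phi(x,r;s) \;\le\; \sin\!\bigl(\pi x/L(r)\bigr)\,G_r(s)\,\exp\!\bigl(C_4(t-s)^{-1/3}\bigr).
\end{equation*}
Heuristically this is an adiabatic/WKB approximation: when $L(u)$ varies slowly on the diffusive scale, at each instant the problem behaves as in the constant-boundary case of Lemma~\ref{stripdensity}, with leading Dirichlet eigenfunction $\sin(\pi y/L(u))$ and instantaneous decay rate $\pi^2/(2L(u)^2)$. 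A direct computation using $c^3=3\pi^2/(2\sqrt{2})$ gives
\begin{equation*}
\int_r^s \frac{\pi^2}{2L(u)^2}\,du \;=\; \frac{3\pi^2}{2c^{2}}\bigl[(t-r)^{1/3}-(t-s)^{1/3}\bigr] \;=\; (3\pi^2)^{1/3}\bigl[(t-r)^{1/3}-(t-s)^{1/3}\bigr],
\end{equation*}
which matches the exponent in $G_r(s)$; the polynomial prefactor $((t-s)/(t-r))^{1/6}$ emerges from the subleading WKB normalisation of the slowly rotating first eigenfunction. To turn the heuristic into a rigorous two-sided bound I would perform the space-time transformation $V_u=B_u/L(u)$ with time change $d\tau=du/L(u)^2$, which straightens the right boundary to~$1$ and turns $B$ into a diffusion on $(0,1)$ with drift $-L(u)L'(u)V$; this is the Harris--Roberts framework, and a constant-strip eigenfunction expansion as in Lemma~\ref{stripdensity} then produces the leading behaviour along with errors controlled by the second Dirichlet eigenvalue in the rescaled time.

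The main obstacle is that the error term must be uniform down to $s$ close to~$t$, where the boundary collapses. In the rescaled coordinates, the total elapsed fast time $\tau(r,s)=3c^{-2}[(t-r)^{1/3}-(t-s)^{1/3}]$ stays bounded as $s\to t$, the drift $-LL'\sim c^3/(3L)$ becomes singular, and the higher Dirichlet eigenmodes do not have enough time to die out. Proposition~10 and Corollary~11 of~\cite{bbs2} show that nevertheless all these effects can be absorbed into the multiplicative factor $\exp(\pm C(t-s)^{-1/3})$. Obtaining this sharp form, rather than a cruder $\exp(\pm C)$, requires careful tracking of the cancellation between the $\sin(\pi x/L(u))$ prefactor (which vanishes at the collapsing boundary) and the $1/L(u)^2$ blow-up in both the drift and the decay rate, and this is the step I would expect to demand the most delicate calculation.
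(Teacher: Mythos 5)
The paper does not actually prove Lemma~\ref{EZbound}: as stated just before it, the lemma ``combines Proposition 10 and Corollary 11 of \cite{bbs2}'' and is taken over verbatim from that companion paper, where it is proved by the Harris--Roberts method. Your sketch correctly reconstructs the skeleton of that argument: the branching Markov property reduces $\E[Z(s)\mid\cF_r]$ to the single-particle quantity $\E^{x,r}[Z(s)]$; the many-to-one lemma and Girsanov (removing both the $-\sqrt{2}$ drift and the $e^{\sqrt{2}x}$ weight) produce $\E^{x,r}[Z(s)]=e^{\sqrt{2}x}\Phi(x,r;s)$ with $\Phi$ a driftless Brownian expectation in the moving strip; the exponent $\int_r^s \pi^2/(2L(u)^2)\,du=(3\pi^2)^{1/3}[(t-r)^{1/3}-(t-s)^{1/3}]$ is computed correctly using $c^3=3\pi^2/(2\sqrt{2})$; the prefactor $((t-s)/(t-r))^{1/6}=(L(s)/L(r))^{1/2}$ does indeed come from the $L^2$-normalisation of the instantaneous first eigenfunction; and the space-time change $V_u=B_u/L(u)$, $d\tau=du/L(u)^2$ with drift $-LL'V$ is the correct straightening. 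Summing the one-particle bound against the particles at time $r$ then returns $Z(r)G_r(s)\exp(\pm C(t-s)^{-1/3})$.

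However, as a proof your proposal is incomplete at exactly the point that matters: you identify the difficulty (the adiabatic approximation must be controlled down to the boundary collapse, where the rescaled time $\tau(r,s)$ stays bounded and higher Dirichlet modes never get to decay) and then, rather than carrying out that estimate, you write that ``Proposition~10 and Corollary~11 of \cite{bbs2} show that nevertheless all these effects can be absorbed into the multiplicative factor.'' That is not a derivation of the error term; it is a citation of the very result being asked for. The missing content is the two-sided rigorous bound on $\Phi$ with the explicit $\exp(\pm C(t-s)^{-1/3})$ factor, including the treatment of the singular drift $-LL'=c^3/(3L)$ and the non-decaying higher modes near $s\approx t$. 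A minor additional confusion: your phrase ``this sharp form, rather than a cruder $\exp(\pm C)$'' has the comparison backwards. The factor $\exp(\pm C(t-s)^{-1/3})$ is \emph{weaker} than a uniform $\exp(\pm C)$ as $s\to t$; the point of the statement is not that it is sharper but that it quantifies how the multiplicative error degrades as the boundary collapses, which is precisely the regime one cannot avoid in the applications (Proposition~\ref{VarZProp} and the survival estimates of \cite{bbs2}).
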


The following result, which is the $r = 0$ case of Proposition 12 in \cite{bbs2}, establishes bounds on the density up to a constant factor.

\begin{Lemma}\label{densityprop}
For $x,y>0$ and $0 \le s \le t$, let $$\psi_s(x,y) = \frac1{L(s)} e^{-(3 \pi^2)^{1/3}(t^{1/3} - (t-s)^{1/3})} \bigg( \frac{t-s}{t} \bigg)^{1/6} e^{\sqrt{2} x} \sin \bigg( \frac{\pi x}{L(0)} \bigg) e^{-\sqrt{2} y} \sin \bigg( \frac{\pi y}{L(s)} \bigg).$$
Fix a positive constant $b$.  There exists a constant $A > 0$ and positive constants $C'$ and $C''$, with $C''$ depending on $b$, such that if $L(0)^2 \leq s \leq t - A$, then $$q_s(x,y) \geq C' \psi_s(x,y)$$ and if $bL(0)^2 \leq s \leq t - A$, then $$q_s(x,y) \leq C'' \psi_s(x,y).$$
\end{Lemma}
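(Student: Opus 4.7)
The lemma is the $r=0$ instance of Proposition 12 of \cite{bbs2}, so in the final paper I would simply cite that result. For the reader's benefit, let me sketch the underlying strategy. First I reduce from branching Brownian motion to a single Brownian motion: by the Many-to-One lemma and a Girsanov transform removing the $-\sqrt{2}$ drift,
$$q_s(x,y) \;=\; e^{\sqrt{2}(x-y)}\, v_s(x,y),$$
where $v_s(x,y)$ is the transition density of a driftless Brownian motion started at $x$ and killed at $0$ and at $L(u)$ at time $u$. Matching $q_s$ to $\psi_s$ is then equivalent to matching $v_s$ to the adiabatic ansatz $\frac{1}{L(s)}\,e^{-(3\pi^2)^{1/3}(t^{1/3}-(t-s)^{1/3})}\bigl((t-s)/t\bigr)^{1/6}\sin(\pi x/L(0))\sin(\pi y/L(s))$.

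The main tool will be a time-slicing argument. I decompose $[0,s]$ into short blocks on which $L(\cdot)$ is essentially constant and apply the constant-boundary estimates of Lemmas \ref{stripdensity} and \ref{mainqslem} on each block. Chaining the per-slice bounds via Chapman--Kolmogorov and multiplying the per-slice exponential decay factors produces
$$\int_0^s \frac{\pi^2}{2\,L(u)^2}\,du \;=\; \frac{3\pi^2}{2c^2}\bigl(t^{1/3}-(t-s)^{1/3}\bigr) \;=\; (3\pi^2)^{1/3}\bigl(t^{1/3}-(t-s)^{1/3}\bigr),$$
using $c^3 = 3\pi^2/(2\sqrt{2})$. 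The ground-state sine profiles at the two endpoints survive, while the per-block normalizations combine into $(L(0)L(s))^{-1/2}$, which equals $L(s)^{-1}\bigl((t-s)/t\bigr)^{1/6}$ since $L(s)/L(0) = ((t-s)/t)^{1/3}$. This is precisely the form of $\psi_s(x,y)$.

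For the upper bound, the hypothesis $s \geq bL(0)^2$ lets me apply Lemma \ref{mainqslem}(\ref{q1}) on an initial block of length $\Theta(L(0)^2)$ to obtain $q_{s_0}(x,y) \leq C p_{s_0}(x,y)$, then propagate over $[s_0,s]$ using the Markov property together with the upper estimate in Lemma \ref{EZbound}. The condition $s \leq t-A$ prevents the final block from degenerating, where Lemma \ref{mainqslem}(\ref{q2}) would otherwise contribute a blow-up of order $(L/\sqrt u)^3$. The lower bound proceeds similarly but additionally requires showing that not too much mass leaks into higher harmonics: I would establish this via a spine decomposition along a trajectory kept well inside $(0, L(u))$ together with a second-moment argument in the style of Harris--Roberts \cite{haro}, feeding in the matching lower bound from Lemma \ref{EZbound}.

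The main technical obstacle is controlling the accumulated adiabatic corrections. Each of the $\Theta(s/L(s)^2)$ time slices contributes a multiplicative error that, handled naively, could compound catastrophically. The argument in \cite{bbs2} handles this by partitioning $[0,s]$ into only a bounded number of blocks inside the bulk regime where $L(u) \asymp L(s)$, and by absorbing the initial and terminal transient slices into the constants $C'$, $C''$, and $A$.
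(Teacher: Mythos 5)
Your proposal matches the paper exactly: the paper offers no proof of its own for this lemma, but states it as the $r=0$ case of Proposition~12 in \cite{bbs2}, precisely as you do. Your supplementary sketch of the argument behind that cited result is sound — in particular the exponent computation $\int_0^s \frac{\pi^2}{2L(u)^2}\,du = \frac{3\pi^2}{2c^2}\bigl(t^{1/3}-(t-s)^{1/3}\bigr) = (3\pi^2)^{1/3}\bigl(t^{1/3}-(t-s)^{1/3}\bigr)$ and the identity $L(s)^{-1}\bigl((t-s)/t\bigr)^{1/6}=(L(0)L(s))^{-1/2}$ both check out — but none of it is needed since a citation suffices here.
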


We will also require estimates on the number of particles killed at the right boundary.  The result below is the $s = 0$ case of Lemma 15 in \cite{bbs2}.

\begin{Lemma} \label{L:sumRj}
Suppose there is initially a single particle at $x$, where $0 < x < L(0)$.  Let $R$ be the number of particles killed at $L(s)$ for some $s \in [0,t]$.  Then there are positive constants $C'$ and $C''$ such that $$C' h(x) \leq \E[R] \leq C''(h(x) + j(x)),$$ where $$h(x) = e^{\sqrt{2}x} \sin\left(\frac{\pi x}{ct^{1/3}}\right) t^{1/3} \exp(- (3\pi^2t)^{1/3})$$ and $$j(x) = x e^{\sqrt{2}x} t^{-1/3} \exp(- (3\pi^2t)^{1/3}).$$
\end{Lemma}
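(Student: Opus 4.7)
The plan is to express $\E[R]$ as a flux integral at the moving right boundary and then to evaluate it using the density estimates of Lemma \ref{densityprop}. First I would derive the flux representation. Since $q_s(x,\cdot)$ satisfies the adjoint parabolic PDE $\partial_s q = \tfrac{1}{2}\partial_y^2 q + \sqrt{2}\partial_y q + q$ on the moving strip $\{0 < y < L(s)\}$ with Dirichlet boundary conditions, differentiating $\int_0^{L(s)} q_s(x,y)\,dy = \E[N(s)]$ in $s$ and identifying the boundary-killing rates leads to
\begin{equation*}
\E[R] = \int_0^t \Bigl(-\tfrac{1}{2}\partial_y q_s(x,y)\big|_{y=L(s)^-}\Bigr)\,ds,
\end{equation*}
where the moving-boundary correction $L'(s) q_s(x,L(s))$ vanishes thanks to the Dirichlet condition. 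Equivalently, the same formula is obtained from a many-to-one argument applied to the first boundary-hitting time, combined with Girsanov to remove the drift.

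Next I would split the time integral at $bL(0)^2$ and at $t - A$, with $b$ and $A$ the constants from Lemma \ref{densityprop}. On the main interval $[bL(0)^2, t-A]$ we have $q_s$ comparable to $\psi_s$ up to constants, so it suffices to compute $-\tfrac{1}{2}\partial_y \psi_s(x,y)|_{y=L(s)}$. Since the $y$-dependence of $\psi_s$ is $e^{-\sqrt{2}y}\sin(\pi y/L(s))$, whose derivative at $y = L(s)$ equals $-(\pi/L(s)) e^{-\sqrt{2} L(s)}$, the integrand factors cleanly. Substituting $u = t-s$ and then $v = u^{1/3}$, the integral reduces to a constant multiple of
\begin{equation*}
\frac{e^{\sqrt{2}x}\sin(\pi x/L(0))\,e^{-(3\pi^2)^{1/3} t^{1/3}}}{t^{1/6}} \int_0^{t^{1/3}} v^{1/2} \exp\!\bigl(-(\sqrt{2}c - (3\pi^2)^{1/3})v\bigr)\,dv.
\end{equation*}
The crucial identity $\sqrt{2}c = (3\pi^2)^{1/3}$, which follows from $c = (3\pi^2/2\sqrt{2})^{1/3}$ and $(2\sqrt{2})^{1/3} = \sqrt{2}$, makes the exponent in $v$ vanish. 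The remaining integral equals $\tfrac{2}{3} t^{1/2}$, and reassembling the prefactors yields a constant multiple of $h(x)$. Because the lower bound in Lemma \ref{densityprop} holds already on $[L(0)^2, t-A]$, the same computation gives $\E[R] \geq C' h(x)$.

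For the upper bound I would then bound the two remaining regimes. On $[0, bL(0)^2]$, where Lemma \ref{densityprop} is not available, I would use the Gaussian-type bound \eqref{q3} on $q_s$ together with a direct Girsanov estimate to control $|\partial_y q_s|$ at $y = L(s)$; the linear factor $x$ in $j(x)$ comes from the typical short-time Brownian displacement scaling, and the integrated contribution is of order $j(x)$. The terminal sliver $[t-A, t]$ has length bounded by $A$ and contributes at most $O(h(x))$ by a direct short-time bound. The main obstacle is rigorously justifying the flux representation of the first paragraph in the branching setting with a moving boundary; once that is in place, the middle-regime calculation is elementary thanks to the cancellation $\sqrt{2}c = (3\pi^2)^{1/3}$, and the early- and terminal-regime estimates reduce to careful Gaussian bookkeeping, though matching the precise form of $j(x)$ does require some care.
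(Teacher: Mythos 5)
The paper does not actually prove this lemma: it is imported verbatim as the $s=0$ case of Lemma 15 of \cite{bbs2}, which in that reference is established via the Harris--Roberts spine decomposition. Your flux-integral approach is therefore a genuinely different route, and there is no in-paper proof to compare against. On the merits, the central computation is correct: the flux representation $\E[R]=\int_0^t\bigl(-\tfrac12\partial_y q_s(x,y)\big|_{y=L(s)^-}\bigr)\,ds$ is sound (the $L'(s)\,q_s$ correction does vanish because of the Dirichlet condition, and the $e^s$ from branching cancels against Girsanov exactly as in \eqref{111}); since both $q_s$ and $\psi_s$ vanish at $y=L(s)$, the two-sided density comparison of Lemma \ref{densityprop} upgrades to a two-sided comparison of the one-sided normal derivatives at the boundary; and your identification of the cancellation $\sqrt{2}c=(3\pi^2)^{1/3}$, the $v=(t-s)^{1/3}$ substitution, and the resulting $\tfrac23 t^{1/2}$ do produce $h(x)$ up to constants on the middle regime. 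The terminal sliver $[t-A,t]$ also poses no problem.

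The gap is the early regime $[0,bL(0)^2]$. Your plan there rests on \eqref{q3}, but \eqref{q3} is a pointwise bound on $q_s$ that does \emph{not} vanish as $y\uparrow L(s)$, so the derivative-comparison trick that works on the middle regime is unavailable: a non-vanishing upper envelope gives no control on $\partial_y q_s$ at the boundary. The ``direct Girsanov estimate'' you gesture at is where all the real work would have to happen, and as stated it is not clear it produces the precise prefactor $x/L(0)$ that makes $j(x)$ appear. A cleaner way to close this hole, consistent with the tools already in the paper, is to abandon the flux representation for this regime and use the supermartingale $M(s)=\sum_i X_i(s)e^{\sqrt{2}X_i(s)}$ from \eqref{Mdef} (a martingale without absorption, a supermartingale with absorption at $0$): for $s\le bL(0)^2$ one has $L(s)\ge L(0)-C$, each particle killed there contributes at least $(L(0)-C)e^{\sqrt{2}(L(0)-C)}$, and optional stopping gives
$$\E[\#\{\text{killed on }[0,bL(0)^2]\}]\;\le\;\frac{x\,e^{\sqrt{2}x}}{(L(0)-C)e^{\sqrt{2}(L(0)-C)}}\;\le\;C'\,\frac{x\,e^{\sqrt{2}x}}{L(0)\,e^{\sqrt{2}L(0)}}\;=\;\frac{C'}{c}\,j(x),$$
using $L(0)=ct^{1/3}$ and $\sqrt{2}L(0)=(3\pi^2 t)^{1/3}$. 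This produces exactly the $j(x)$ term, with the linear factor $x$ arising from the martingale normalization rather than a Gaussian displacement heuristic. With that substitution, your decomposition into three regimes and the middle-regime flux computation together give a complete and correct proof, by a route different from (and arguably more transparent than) the spine argument of \cite{bbs2}.
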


Finally, we will need the following bound on the second moment of $Z(s)$.

\begin{Prop}\label{VarZProp}
Fix $\kappa > 0$ and $\delta > 0$.  Then there exists a positive constant $C$, depending on $\kappa$ and $\delta$ but not on $t$, such that for all $t \geq 1$ and all $s$ satisfying $\kappa t^{2/3} \leq s \leq (1 - \delta) t$,
$$\textup{Var}(Z(s)) \leq C \E[Z(s)]^2 \bigg( \frac{e^{\sqrt{2} L(0)}}{L(0)Z(0)} + \frac{e^{\sqrt{2} L(0)} Y(0)}{L(0)^2 Z(0)^2} \bigg).$$
\end{Prop}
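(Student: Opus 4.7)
My plan is to bound $\textup{Var}(Z(s))$ via the many-to-two formula for the second moment. For a single initial particle at $x_0$,
\[
\E_{x_0}[Z(s)^2] = \int_0^{L(s)} f_s(y)^2 q_s(x_0,y) \, dy + 2\int_0^s \!\!\int_0^{L(u)} q_u(x_0,z) \phi_u(z)^2 \, dz \, du,
\]
where $f_s(y) = e^{\sqrt{2}y}\sin(\pi y/L(s))$ and $\phi_u(z) := \E_{z,u}[Z(s)]$. Since descendants of distinct initial particles are independent, the multi-particle variance decomposes as $\textup{Var}(Z(s)) = \sum_i \textup{Var}^{(1)}_{X_i(0)}(Z(s))$, so it suffices to prove a per-particle bound and sum. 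Lemma \ref{EZbound} immediately gives $\phi_u(z) \leq C f_u(z) G_u(s)$ with $f_u(z) = e^{\sqrt{2}z}\sin(\pi z/L(u))$ (the $\exp(C_4(t-s)^{-1/3})$ correction is bounded since $s \leq (1-\delta)t$). The problem thus reduces to estimating $\int q_u(x_0,z) f_u(z)^2 dz$ and integrating against $G_u(s)^2$, using the two identities $G_0(u)G_u(s) = G_0(s)$ and $G_u(s) e^{\sqrt{2}L(u)} = e^{\sqrt{2}L(s)}(L(s)/L(u))^{1/2}$.

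The diagonal term is handled by Lemma \ref{densityprop}: $q_s \leq C\psi_s$ yields $\int f_s^2 q_s \, dy \leq C G_0(s) f_0(x_0) e^{\sqrt{2}L(s)}/L(s)^4$, after evaluating $\int_0^{L(s)} e^{\sqrt{2}y}\sin^3(\pi y/L(s)) dy \leq C e^{\sqrt{2}L(s)}/L(s)^3$ via the substitution $w = L(s)-y$ and the finite integral $\int_0^\infty w^3 e^{-\sqrt{2}w} dw$. For the off-diagonal term I split at $u_0 = b L(0)^2$. For $u \in [u_0, s]$, Lemma \ref{densityprop} again gives $\int q_u f_u^2 dz \leq C G_0(u) f_0(x_0) e^{\sqrt{2}L(u)}/L(u)^4$, and the two identities collapse the resulting integral to $C f_0(x_0) G_0(s) e^{\sqrt{2}L(s)} L(s)^{1/2} \int_{u_0}^s L(u)^{-9/2} du \leq C f_0(x_0) G_0(s) e^{\sqrt{2}L(s)}/L(s)$, where I used $L(u)^{-9/2}\propto (t-u)^{-3/2}$ and integrated. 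Summing over initial particles (so $f_0(x_0)$ becomes $Z(0)$), then applying the lower bound $\E[Z(s)] \geq C Z(0) G_0(s)$ and the constraint $L(s) \geq \delta^{1/3}L(0)$, converts this bound to the first term $C\E[Z(s)]^2 e^{\sqrt{2}L(0)}/(L(0)Z(0))$ of the RHS.

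For $u \in [0, u_0]$, Lemma \ref{densityprop} is unavailable, and this is the hard regime. The approach I would take uses the elementary bound $\sin^2(\pi z/L(u)) \leq (\pi z/L(u))^2$ together with the observation that $\mathcal{L}_1[z e^{\sqrt{2}z}] = 0$, where $\mathcal{L}_1 = \tfrac{1}{2}\partial_z^2 - \sqrt{2}\partial_z + 1$ is the generator of killed BBM (a direct computation). Consequently $W(u) := \sum_i X_i(u) e^{\sqrt{2}X_i(u)}$ is a supermartingale, and $\E[W(u)] \leq W(0) \leq L(0) Y(0)$. This supplies a bound on $\int q_u f_u^2 \, dz$ in terms of $Y(0)$, and on this short $u$-range one has $G_u(s) \asymp G_0(s)$, so the contribution is estimated by $C L(0)^2 G_0(s)^2 e^{\sqrt{2}L(0)} Y(0) / L(0)$ style quantities that match the second term.

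The main obstacle lies in the small-$u$ regime: the naive bound on $\int q_u \phi_u^2 dz$ there is of order $\E[Z(s)]^2$ itself, because $q_u$ concentrates near $x_0$ and $\phi_u(x_0) \approx \phi_0(x_0) = \E[Z(s)]$, so a naive $u$-integration over $[0,u_0]$ would produce a spurious factor of $L(0)^2$ (a total overshoot of order $L(0)^4$ compared to the target). Extracting the cancellation with $\E[Z(s)]^2$ is essential: using the identity $\E[Z(s)] = \int q_u(x_0,z)\phi_u(z)\,dz$ (valid for every $u$), the variance contribution from the small-$u$ region equals $2\int_0^{u_0}\int q_u(\phi_u - \bar\phi_u)^2 dz\, du$ plus terms that cancel with $\E[Z(s)]^2$, where $\bar\phi_u := (\int q_u\phi_u)/(\int q_u)$. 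This is equivalent to the quadratic-variation representation of the martingale $M_u = \E[Z(s)\mid \mathcal{F}_u]$, which expresses $\textup{Var}(Z(s))$ directly as $\int\!\!\int q_u[(\partial_z\phi_u)^2 + \phi_u^2]\, dz\,du$ and thereby bypasses the subtraction. Bounding the spatial fluctuation $(\phi_u - \bar\phi_u)^2$ (or equivalently $(\partial_z \phi_u)^2$) using Lemma \ref{EZbound} and the $W$-supermartingale estimate is precisely what produces the $Y(0)/(L(0)^2 Z(0)^2)$ scale, yielding the second term of the bound.
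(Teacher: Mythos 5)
Your proposal takes a genuinely different route from the paper: you attack $\textup{Var}(Z(s))$ directly via the many-to-two formula over the whole time interval $[0,s]$, whereas the paper discretizes $[0,s]$ into $O(t^{1/3})$ increments of length $\asymp t^{2/3}\asymp L(0)^2$, applies the conditional-variance formula inductively, and on each increment invokes Lemma~\ref{varZlem} (the constant-boundary second-moment bound from \cite{bbs}), together with Lemma~\ref{EZbound} for the conditional mean. Your treatment of the diagonal term and of the off-diagonal integral over $u\in[bL(0)^2,s]$ via Lemma~\ref{densityprop} and the two $G$-identities is correct, and does reproduce the first term $C\E[Z(s)]^2 e^{\sqrt{2}L(0)}/(L(0)Z(0))$.

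The gap is in the small-$u$ regime $u\in[0,bL(0)^2]$, which you acknowledge is the hard part but do not actually resolve. The supermartingale bound $\E[W(u)]\leq W(0)$ controls $\int q_u(x_0,z)\,z e^{\sqrt{2}z}\,dz$, but what you need is $\int q_u(x_0,z)\,z^2 e^{2\sqrt{2}z}\,dz$, and the only way to pass from the latter to the former is via $ze^{\sqrt{2}z}\leq L e^{\sqrt{2}L}$, which gives $\int q_u f_u^2\,dz\leq C e^{\sqrt{2}L(0)}Y(0)$ uniformly in $u$. Integrating $G_u(s)^2$ times this over $u\in[0,bL(0)^2]$ yields $\asymp L(0)^2 G_0(s)^2 e^{\sqrt{2}L(0)}Y(0)$, which overshoots the target second term $\asymp G_0(s)^2 e^{\sqrt{2}L(0)}Y(0)/L(0)^2$ by a factor $L(0)^4$, exactly as you feared. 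Your proposed repair via the quadratic-variation representation $\textup{Var}(Z(s)) = \int_0^s\int q_u[(\partial_z\phi_u)^2+\phi_u^2]$ does not remove this obstruction: that formula still contains the same $\int q_u\phi_u^2$ term (the factor-of-$2$ in the many-to-two formula and the subtracted $\E[Z(s)]^2$ are absorbed into the $(\partial_z\phi_u)^2$ piece, not into a shrinking of $\int q_u\phi_u^2$), and you give no estimate of $(\phi_u-\bar\phi_u)^2$ or $(\partial_z\phi_u)^2$ that would close the $L(0)^4$ gap. What actually closes it in the paper is Lemma~\ref{varZlem}: applied over a single increment of length $s_1\asymp\kappa t^{2/3}\geq bL(0)^2$, it gives $\E[Z(s_1)^2]\leq Ce^{\sqrt{2}x}e^{\sqrt{2}L(0)}s_1/L(0)^4\asymp e^{\sqrt{2}x}e^{\sqrt{2}L(0)}/L(0)^2$, exactly $L(0)^4$ sharper than the crude $W$-supermartingale bound. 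That sharpening comes from the painstaking four-way split of the $u$-integral using the density estimates \eqref{q1}--\eqref{q6} (the same machinery as the proof of Lemma~\ref{varXf} in this paper), and it is the missing ingredient in your argument. To fix the proposal, you would either need to reproduce that split-integral estimate for $u\in[0,bL(0)^2]$, or switch to the paper's discretization-plus-induction strategy, which lets you cite Lemma~\ref{varZlem} as a black box.
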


\begin{proof}
The proof is similar to the proof of Lemma 12 in \cite{bbs}.  Choose times $0 = s_0 < s_1 < \dots < s_K = s$ such that $\kappa t^{2/3} \leq s_{i+1} - s_i \leq 2 \kappa t^{2/3}$ for $i = 0, 1, \dots, K-1$.  Note that $K \leq Ct^{1/3}$.

By Lemma \ref{EZbound}, for $i = 0, 1, \dots, K-1$,
\begin{equation}\label{EZsi}
\E[Z(s_{i+1})|{\cal F}_{s_i}] =\exp \bigg( - (3 \pi^2)^{1/3} \big((t-s_i)^{1/3} - (t-s_{i+1})^{1/3} \big) \bigg) \bigg( \frac{t-s_{i+1}}{t-s_i} \bigg)^{1/6}Z(s_i) D_i,
\end{equation}
where
\begin{equation}\label{Dibound}
\exp(-C_3 \delta^{-1/3} t^{-1/3}) \leq D_i \leq \exp(C_4 \delta^{-1/3} t^{-1/3}).
\end{equation}
Because the particles alive at time $s_{i+1}$ are a subset of the particles that would be alive at time $s_{i+1}$ if particles were killed at $L(s_i)$, rather than $L(s)$, for $s \in [s_i, s_{i+1}]$, and the right-hand side of (\ref{Dineq}) is bounded by a constant when $s \geq \kappa t^{2/3}$ and $L \leq ct^{1/3}$, it follows from (\ref{Yexp}) that
\begin{equation}\label{EYsi}
\E[Y(s_{i+1})|{\cal F}_{s_i}] \leq C Z(s_i)
\end{equation}
for $i = 0, 1, \dots, K-1$.  Let $$Z'(s_{i+1}) = \sum_{i=1}^{N(s_{i+1})} e^{\sqrt{2} X_i(s_{i+1})} \sin \bigg( \frac{\pi X_i(s_{i+1})}{L(s_i)} \bigg),$$ which is the same as $Z(s_{i+1})$ except $L(s_i)$ rather than $L(s_{i+1})$ appears in the denominator.  Because $\sin(\pi x/L(s_{i+1})) \leq C \sin (\pi x/L(s_i))$ for all $x \in [0, L(s_{i+1})]$, we have $Z(s_{i+1}) \leq CZ'(s_{i+1})$.  By Lemma \ref{varZlem}, if there is a single particle at $x$ at time $s_i$, then
$$\mbox{Var}(Z(s_{i+1})|{\cal F}_{s_i}) \le \E[Z(s_{i+1})^2|{\cal F}_{s_i}] \leq C\E[Z'(s_{i+1})^2|{\cal F}_{s_i}] \leq \frac{Ce^{\sqrt{2} x}e^{\sqrt{2}L(s_i)}(s_{i+1} - s_i)}{L(s_i)^4}.$$  Because particles move and branch independently, it follows by summing over the particles at time $s_i$ that
\begin{equation}\label{varZsi}
\mbox{Var}(Z(s_{i+1})|{\cal F}_{s_i}) \leq \frac{CY(s_i) e^{\sqrt{2} L(s_i)} (s_{i+1} - s_i)}{L(s_i)^4} \leq C t^{-2/3} Y(s_i)e^{\sqrt{2} L(s_i)}.
\end{equation}

Using the conditional variance formula, equations (\ref{EZsi}), (\ref{Dibound}), and (\ref{varZsi}), and the fact that $s<(1-\delta)t$,
\begin{align}
\mbox{Var}&(Z(s_{i+1})) = \E[\mbox{Var}(Z(s_{i+1})|{\cal F}_{s_i})] + \mbox{Var}(\E[Z(s_{i+1})|{\cal F}_{s_i}]) \nonumber \\
&\leq C t^{-2/3} e^{\sqrt{2} L(s_i)} \E[Y(s_i)] + e^{ - 2(3 \pi^2)^{1/3} \big((t-s_i)^{1/3} - (t-s_{i+1})^{1/3} \big) } \bigg( \frac{t-s_{i+1}}{t-s_i} \bigg)^{1/3} \mbox{Var}(D_i Z(s_i)). \nonumber \\
&\leq C t^{-2/3} e^{\sqrt{2} L(s_i)} \E[Y(s_i)] + e^{2C_4 \delta^{-1/3} t^{-1/3}} e^{ - 2(3 \pi^2)^{1/3} \big((t-s_i)^{1/3} - (t-s_{i+1})^{1/3} \big) }\mbox{Var}(Z(s_i)). \nonumber
\end{align}
Therefore, by induction,
\begin{align}
\mbox{Var}(Z(s)) &\leq C t^{-2/3} \sum_{i=0}^{K-1} e^{\sqrt{2} L(s_i)} \bigg( \prod_{j=i+1}^{K-1} e^{2C_4 \delta^{-1/3} t^{1/3}} e^{ - 2(3 \pi^2)^{1/3} \big((t-s_j)^{1/3} - (t-s_{j+1})^{1/3} \big) } \bigg) \E[Y(s_i)] \nonumber \\
&\leq C t^{-2/3} \sum_{i=0}^{K-1} e^{\sqrt{2} L(s_i)} e^{2K C_4 \delta^{-1/3} t^{-1/3}} e^{ - 2(3 \pi^2)^{1/3} \big((t-s_{i+1})^{1/3} - (t-s)^{1/3} \big) } \E[Y(s_i)]. \nonumber
\end{align}
By (\ref{EYsi}), for $i = 1, \dots, K-1$, we have $\E[Y(s_i)] = \E[\E[Y(s_i)|{\cal F}_{s_{i-1}}]] \leq C \E[Z(s_{i-1})]$.
Because $K \leq Ct^{1/3}$, we have $e^{2KC_4 \delta^{-1/3} t^{-1/3}} \leq C$.  Therefore,
\begin{align}\label{mainvareq}
\mbox{Var}(Z(s)) &\leq C t^{-2/3} \sum_{i=1}^{K-1} e^{\sqrt{2} L(s_i)} e^{ - 2(3 \pi^2)^{1/3} \big((t-s_{i+1})^{1/3} - (t-s)^{1/3} \big) }  \E[Z(s_{i-1})] \nonumber \\
&\qquad+ C t^{-2/3} e^{\sqrt{2} L(0)} e^{ - 2(3 \pi^2)^{1/3} \big((t-s_{1})^{1/3} - (t-s)^{1/3} \big) }  Y(0).
\end{align}
Denote the two terms on the right-hand side of (\ref{mainvareq}) by $T_1$ and $T_2$.

Because $[(t-s)/t]^{1/6}$ is bounded above and below by positive constants when $0 \leq s \leq (1 - \delta)t$, it follows from Lemma \ref{EZbound} that there are constants $C'$ and $C''$, depending on $\delta$, such that for $i = 0, 1, \dots, K$, $$C' Z(0) \exp \big( -(3 \pi^2)^{1/3} (t^{1/3} - (t - s_i)^{1/3}) \big) \leq \E[Z(s_i)] \leq C'' Z(0) \exp \big( -(3 \pi^2)^{1/3} (t^{1/3} - (t - s_i)^{1/3}) \big).$$
Therefore, using that $\sqrt{2} c = (3 \pi^2)^{1/3}$,
\begin{align}\label{prelimT1}
T_1 &\leq C t^{-2/3} \sum_{i=1}^{K-1} \exp \bigg(\sqrt{2} L(s_i) - 2(3 \pi^2)^{1/3} \big((t-s_{i+1})^{1/3} - (t-s)^{1/3} \big) \nonumber \\
&\qquad - (3 \pi^2)^{1/3}(t^{1/3} - (t - s_{i-1})^{1/3}) \bigg) Z(0) \nonumber \\
&= C t^{-2/3} \sum_{i=1}^{K-1} \exp \big( (3 \pi^2)^{1/3} (t - s_i)^{1/3} - 2(3 \pi^2)^{1/3}((t - s_{i+1})^{1/3} - (t - s)^{1/3}) \nonumber \\
&\qquad - (3 \pi^2)^{1/3}(t^{1/3} - (t - s_{i-1})^{1/3}) \big) Z(0) \nonumber \\
&= C t^{-2/3} \exp \big( 2(3 \pi^2)^{1/3} (t - s)^{1/3} - (3 \pi^2)^{1/3} t^{1/3} \big) Z(0) \nonumber \\
&\qquad\times \sum_{i=1}^{K-1} \exp \big( (3 \pi^2)^{1/3} ((t - s_i)^{1/3} - 2(t - s_{i+1})^{1/3} + (t - s_{i-1})^{1/3}) \big).
\end{align}
For $i = 0, 1, \dots, K-1$, we have $t-s_{i+1} \geq \delta t$, and so $(t - s_{i})^{1/3} - (t - s_{i+1})^{1/3} \leq C$.  Therefore, the sum on the right-hand side of (\ref{prelimT1}) is bounded by $C(K-1) \leq C t^{1/3}$.  Thus, using  $t>1$ and Lemma \ref{EZbound} again,
\begin{align}\label{T1}
T_1 &\leq C t^{-1/3} \exp \big( (3 \pi^2)^{1/3} t^{1/3} \big) \exp \big( 2(3 \pi^2)^{1/3}((t - s)^{1/3} - t^{1/3}) \big) \frac{Z(0)^2}{Z(0)} \nonumber \\
&\leq C t^{-1/3} \exp \big( (3 \pi^2)^{1/3} t^{1/3} \big) \frac{\E[Z(s)]^2}{Z(0)} \nonumber \\
&\leq \frac{C e^{\sqrt{2} L(0)} \E[Z(s)]^2}{L(0) Z(0)}.
\end{align}
Also, using that $t^{1/3} - (t - s_1)^{1/3} \leq C$,
\begin{align}\label{T2}
T_2 &= C t^{-2/3} e^{\sqrt{2} L(0)} \exp \big(- 2(3 \pi^2)^{1/3}((t - s_1)^{1/3} - (t - s)^{1/3}) \big) Y(0) \nonumber \\
&\leq C t^{-2/3} e^{\sqrt{2} L(0)} \exp \big( - 2(3 \pi^2)^{1/3}(t^{1/3} - (t - s)^{1/3}) \big) Y(0) \nonumber \\
&\leq \frac{C e^{\sqrt{2} L(0)} Y(0) \E[Z(s)]^2}{L(0)^2 Z(0)^2}.
\end{align}
The result now follows from (\ref{mainvareq}), (\ref{T1}), and (\ref{T2}).
\end{proof}

\section{Number and configuration of particles}\label{pfsec}

In this section, we return to the model presented in the introduction, in which there is initially a single particle at $x$ and we are concerned with the asymptotic behavior of the process as $x \rightarrow \infty$.

\subsection{The process before time $\kappa x^2$}

We first consider how the branching Brownian motion evolves during the initial period between time $0$ and time $\kappa x^2$, where $\kappa > 0$ is an arbitrary positive constant.  We will use the following result of Neveu \cite{nev87}.

\begin{Lemma}\label{nevlem}
Consider branching Brownian motion with drift $-\sqrt{2}$ and no absorption, started with a single particle at the origin.  For each $y \geq 0$, let $K(y)$ be the number of particles that reach $-y$ in a modified process in which particles are killed upon reaching $-y$.  Then there exists a random variable $W$, with $P(0 < W < \infty) = 1$ and $E[W] = \infty$, such that $$\lim_{y \rightarrow \infty} y e^{-\sqrt{2} y} K(y) = W \hspace{.1in}a.s.$$
\end{Lemma}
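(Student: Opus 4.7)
The plan is to exploit the branching structure of $(K(y))_{y \geq 0}$ in the parameter $y$.  By the strong Markov property of the branching Brownian motion, applied at the first hitting time of $-y_1$ for each surviving particle, we see that for $0 < y_1 < y_2$ and given $K(y_1)$, the number $K(y_2)$ is distributed as the sum of $K(y_1)$ independent copies of $K(y_2 - y_1)$.  Thus $(K(y))_{y \geq 0}$ is itself a continuous-time Markov branching process in $y$.  A many-to-one calculation gives $\E[K(y)] = \E[e^{T_{-y}}] = e^{\sqrt{2} y}$, where $T_{-y}$ denotes the first hitting time of $-y$ by a single Brownian motion with drift $-\sqrt{2}$ started at the origin (the factor $e^{T_{-y}}$ accounts for the rate-$1$ branching), using the Laplace transform $\E[e^{\beta T_{-y}}] = e^{y(\sqrt{2} - \sqrt{2 - 2\beta})}$ at $\beta = 1$.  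Consequently $M(y) := e^{-\sqrt{2} y} K(y)$ is a nonnegative martingale in $y$ with $\E[M(y)] = 1$, which by the martingale convergence theorem tends almost surely to a limit $M_\infty \in [0, \infty)$.  A parallel many-to-two second moment calculation shows that $\mathrm{Var}(M(y))$ grows like $y$, so $M(y)$ is not bounded in $L^2$; the standard dichotomy for critical branching processes then forces $M_\infty = 0$ almost surely.

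The principal step, and the main obstacle, is to establish the Seneta-Heyde-type normalization $W(y) := y M(y) = y e^{-\sqrt{2} y} K(y) \to W$ almost surely, with $0 < W < \infty$ a.s.  The natural route is to study the cumulant $\varphi_y(\lambda) := -\log \E[e^{-\lambda K(y)}]$, which by the branching property in $y$ satisfies an autonomous first-order ODE in $y$, driven by the infinitesimal branching mechanism $\Psi$ of $(K(y))$.  At the critical drift $\sqrt{2}$, this mechanism vanishes to second order at $\lambda = 0$ (the hallmark of criticality), and an explicit integration of the ODE yields, as $y \to \infty$, an asymptotic of the form $y\,\varphi_y(\lambda e^{-\sqrt{2} y}) \to \lambda/(1 + c_0 \lambda)$ for each fixed $\lambda > 0$ and some explicit constant $c_0 > 0$.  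Tauberian inversion of this Laplace asymptotics yields the convergence in distribution of $W(y)$ to a non-degenerate limit $W$.  The upgrade to almost-sure convergence uses the branching decomposition $K(y + a) = \sum_{i=1}^{K(y)} K^{(i)}(a)$ with $K^{(i)}$ i.i.d.\ copies of $K(a)$, together with a sandwich argument controlling the oscillations of $W(y)$ between points of a geometric grid in $y$, in the spirit of classical Seneta-Heyde arguments for critical branching processes.

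Strict positivity $\P(W > 0) = 1$ will follow from the fact that the initial particle almost surely reaches $-y$, so $K(y) \geq 1$ almost surely for every $y$, combined with the branching structure in $y$ and a tail zero-one law.  Finally, $\E[W] = \infty$ comes from a direct tail analysis: the Laplace asymptotics of the previous step imply that the Laplace transform of $W$ satisfies $\E[e^{-\lambda W}] = 1 - (\lambda/c_0) \log(1/\lambda) + o(\lambda \log(1/\lambda))$ as $\lambda \downarrow 0$, which by a Karamata-type Tauberian theorem translates into $\P(W > t) \sim C/t$ as $t \to \infty$, so $W$ has infinite mean.  This heavy tail is the signature of an additive (rather than survival-conditional) normalization of a critical branching process, and is consistent with the non-uniform integrability of the sequence $\{y M(y)\}$ implicit in $M_\infty = 0$.
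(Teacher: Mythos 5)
The paper does not prove this statement: it is cited as a result of Neveu \cite{nev87}, so there is no proof in the text to compare against. Judged on its own, your sketch has the right skeleton --- the branching structure of $(K(y))_{y\ge 0}$ in the parameter $y$, the martingale $M(y)=e^{-\sqrt 2 y}K(y)$, its degenerate limit, a Seneta--Heyde renormalisation via Laplace asymptotics, and a sandwich upgrade to almost-sure convergence --- but two of the intermediate claims are false and would need repair before the argument could be made rigorous.

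First, $\mathrm{Var}(M(y))$ does not ``grow like $y$''; it is infinite for every $y>0$. In the many-to-two formula, $\E[K(y)(K(y)-1)] = 2\int_0^\infty\!\int_{-y}^\infty e^s p_s(0,z)\,(\E^z[K(y)])^2\,dz\,ds$, where $p_s(0,\cdot)$ is the transition density of $-\sqrt 2$-drift Brownian motion killed at $-y$. A Girsanov $h$-transform gives $\int_0^\infty e^s p_s(0,z)\,ds = 2y\,e^{-\sqrt 2 z}$ for $z>0$, while $(\E^z[K(y)])^2 = e^{2\sqrt 2(z+y)}$; the product grows like $e^{\sqrt 2 z}$ and the $z$-integral diverges. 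So $M_\infty=0$ cannot be reached by an $L^2$ argument --- the correct route is the Kesten--Stigum theorem, the point being that the $L\log L$ integrability condition fails for $(K(y))$, and this is exactly the same heavy tail that forces the extra factor of $y$ into the normalisation. Second, $(K(y))_{y\ge 0}$ is not a \emph{critical} branching process: since $\E[K(y)]=e^{\sqrt 2 y}$, it is supercritical with Malthusian parameter $\sqrt 2$, so its branching mechanism has a simple, not double, zero at the origin. What is critical is the \emph{drift} $\mu=\sqrt 2$, sitting exactly where $\sqrt{\mu^2-2}$ vanishes and $\E[e^{T_{-y}}]$ is only just finite; this degeneracy is what gives the offspring law its logarithmic tail. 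The framework you want is therefore Seneta--Heyde theory for a \emph{supercritical} process with infinite $L\log L$, not a Yaglom-type analysis of a critical one, and conflating the two is where both the ``vanishes to second order'' and the ``variance $\sim y$'' mis-steps originate. Your remaining steps (ODE for the cumulant, Tauberian inversion, a.s.\ upgrade, and the tail $\P(W>t)\sim C/t$ giving $\E W=\infty$) are plausible in outline, but they carry essentially all the technical weight and are precisely the content of Neveu's argument.
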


For our process which begins with a single particle at $x$, let $K(y)$ be the number of particles that would reach $x - y$, if particles were killed upon reaching $x - y$.  Note that $K(y) < \infty$ almost surely because of Kesten's result \cite{kesten} that critical branching Brownian motion with absorption dies out.  If $y$ is sufficiently large, then $y e^{-\sqrt{2} y} K(y)$ will have approximately the same distribution as the random variable $W$ in Lemma \ref{nevlem}.  Our strategy for studying the branching Brownian motion between time $0$ and time $\kappa x^2$ will be to choose a sufficiently large constant $y$, wait for $K(y)$ particles to reach $x - y$, and then consider $K(y)$ independent branching Brownian motions started from $x - y$.

Let $\alpha \in \R$, and
let
\begin{equation}\label{Zadef}
Z_{\alpha} = \sum_{i=1}^{N(\kappa x^2)} e^{\sqrt{2} X_i(\kappa x^2)} \sin \bigg( \frac{\pi X_i(\kappa x^2)}{x + \alpha} \bigg) {\bf 1}_{\{X_i(\kappa x^2) \leq x + \alpha\}}.
\end{equation}
The following result describes the behavior of the configuration of particles at time $\kappa x^2$.

\begin{Lemma}\label{smalltime}
For all $\varepsilon > 0$, there exists a positive constant $C_5$, depending on $\kappa$ and $\varepsilon$ but not on $x$, such that for sufficiently large $x$,
\begin{equation}\label{Yalpha}
\P\big(Y(\kappa x^2) \leq C_5 x^{-1} e^{\sqrt{2} x} \big) \geq 1 - \varepsilon.
\end{equation}
Also, there exist positive constants $C_6$ and $C_7$, depending on $\kappa$ and $\varepsilon$ but not on $x$ or $\alpha$, such that for sufficiently large $x$,
\begin{equation}\label{Zalpha}
\P\big(C_6 x^{-1} e^{\sqrt{2} x} \leq Z_{\alpha} \leq C_7 x^{-1} e^{\sqrt{2} x}\big) \geq 1 - \varepsilon.
\end{equation}
Furthermore,
\begin{equation}\label{rightmost}
\lim_{x \rightarrow \infty} \P\big( X_1(\kappa x^2) \leq x + \alpha \big) = 1.
\end{equation}
\end{Lemma}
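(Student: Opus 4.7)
My proof plan is to use Neveu's lemma (Lemma~\ref{nevlem}) to reduce the process at time $\kappa x^2$ to a collection of roughly $e^{\sqrt{2}y}/y$ independent BBM subprocesses starting from $x-y$ for a large constant $y$, and then apply the constant-boundary strip estimates of Section~\ref{constbdrysec} with right boundary $L = x+\alpha$ to each subprocess.

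Fix $\varepsilon > 0$. By Lemma~\ref{nevlem}, I would choose $y$ large so that on an event of probability at least $1 - \varepsilon/4$, both $W \in [w_1, w_2]$ for some $0 < w_1 < w_2 < \infty$ and $K(y) \in [K_1, K_2] e^{\sqrt{2}y}/y$ (using that almost sure convergence implies convergence in probability). Because $y$ is fixed and $x-y > 0$ for large $x$, these $K(y)$ particles reach $x-y$ at times of order $O(1)$ and without interference from the absorption at $0$, and conditional on the resulting configuration they spawn $K(y)$ i.i.d.\ BBM subprocesses from $x-y$. Let $\tilde N$ denote the process obtained from $N$ by additionally killing at $L = x+\alpha$, and let $\tilde Y, \tilde Z$ be the analogues of $Y$ and $Z_\alpha$ in $\tilde N$.

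For each subprocess, Lemma~\ref{YZexplem} yields $\E[\tilde Z_{\text{sub}}(\kappa x^2)] = e^{-\pi^2 \kappa x^2/2L^2} e^{\sqrt{2}(x-y)} \sin(\pi(x-y)/L)$, and since $\sin(\pi(x-y)/L) \approx \pi(y+\alpha)/x$, summing over the $K(y)$ subprocesses gives a total conditional mean of order $e^{\sqrt{2}x}/x$. Applying Lemma~\ref{varZlem} (valid since $\kappa x^2 \geq b L^2$ for a suitable $b$) to each subprocess and summing i.i.d.\ variances produces $\textup{Var}(\tilde Z)/\E[\tilde Z]^2 = O(1/y)$. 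Choosing $y$ large enough and invoking Chebyshev then yields $\P(C_6 x^{-1} e^{\sqrt{2}x} \leq \tilde Z \leq C_7 x^{-1} e^{\sqrt{2}x}) \geq 1 - \varepsilon/4$. The bound \eqref{Yalpha} follows in the same way from $\E[\tilde Y] = (4/\pi) \E[\tilde Z](1+D)$ in Lemma~\ref{YZexplem} combined with Markov's inequality.

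It remains to show that $\tilde N = N$ at time $\kappa x^2$ and $X_1(\kappa x^2) \leq x+\alpha$ on an event of probability at least $1 - \varepsilon/4$. Both reduce to bounding the probability that any particle in $N$ ever reaches $x+\alpha$ during $[0, \kappa x^2]$. In BBM with drift $-\sqrt{2}$ started from $z \in (0, L)$ and absorbed at $0$ and $L$, the expected number of particles ever reaching $L$ is the unique solution of $\tfrac{1}{2} u'' - \sqrt{2} u' + u = 0$ with $u(0) = 0$, $u(L) = 1$, namely $u(z) = (z/L) e^{\sqrt{2}(z-L)}$. Applied with $z = x-y$ and $L = x+\alpha$ and summed over $K(y)$ subprocesses, this gives an expected total of order $e^{-\sqrt{2}\alpha}/y$, and Markov's inequality delivers \eqref{rightmost} along with the identity $\tilde Z = Z_\alpha$, $\tilde Y = Y$ on the good event. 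The principal obstacle is the variance reduction: the strip estimates applied to the single initial particle leave $\textup{Var}(\tilde Z)/\E[\tilde Z]^2$ of order $1$, so Chebyshev alone cannot produce a $(1-\varepsilon)$-probability statement; Neveu's lemma is precisely what allows one to average over $\asymp e^{\sqrt{2}y}/y$ i.i.d.\ subprocesses and drive the ratio down to $O(1/y)$.
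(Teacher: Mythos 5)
Your proposal is correct and follows essentially the same route as the paper's proof: use Neveu's lemma to reduce to $K(y)\asymp e^{\sqrt 2 y}/y$ nearly-independent subprocesses started near $x-y$, then apply the constant-right-boundary estimates of Section~\ref{constbdrysec} with $L = x+\alpha$ to compute first and second moments of $Z_\alpha$ and $Y$, with the essential variance-reduction coming from the factor $K(y)$ in the denominator of $\mathrm{Var}/(\mathrm{mean})^2$, which makes the Chebyshev bound $O(1/y)$ and hence small for $y$ large. You also correctly identified that the only obstruction to removing the extra killing at $x+\alpha$ is bounding the chance that any descendant of a particle at $x-y$ ever reaches $x+\alpha$.

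The one place you deviate from the paper is in bounding that hitting probability. You solve the ODE $\tfrac12 u'' - \sqrt 2 u' + u = 0$ with $u(0)=0$, $u(L)=1$ to get $u(z) = (z/L)e^{\sqrt 2(z-L)}$ for the expected number of particles ever reaching $L$, and then apply Markov. The paper instead observes that $M(s)=\sum_i X_i(s)e^{\sqrt 2 X_i(s)}$ is a martingale (for the process killed at $0$ only) and applies Optional Sampling at the first hitting time of $L$. Both routes yield the identical bound $\frac{(x-y)e^{\sqrt 2(x-y)}}{(x+\alpha)e^{\sqrt 2(x+\alpha)}}$, and both are standard; the ODE route is perhaps slightly more self-contained, while the martingale route reuses a tool ($M$) that the paper needs anyway. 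One detail you gloss over, which the paper handles explicitly, is that the $K(y)$ subprocesses are launched at distinct random times $u_1 < \dots < u_{K(y)}$ and are therefore run over durations $\kappa x^2 - u_i$ that are not identical; the paper conditions on the $\sigma$-field $\mathcal G$ generated by these times, fixes a constant $\rho$ with $\P(u_{K(y)}\le\rho)$ close to $1$, and uses that all durations lie in $[\kappa x^2 - \rho,\,\kappa x^2]$ so the exponential factors $e^{-\pi^2 s/2L^2}$ vary only by a ratio tending to $1$. Your sketch treats them as i.i.d., which is not literally true, but the gap is easy to close along the paper's lines.
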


\begin{proof} Choose $\eta > 0$ sufficiently small and $B > 0$ sufficiently large such that the random variable $W$ in Lemma \ref{nevlem} satisfies $\P(W \leq 2 \eta) < \varepsilon/8$ and $\P(W \geq B - \eta) < \varepsilon/8$.  By Lemma \ref{nevlem}, we can choose $y > 0$ large enough that, for some random variable $W$ having the same distribution as the random variable $W$ in Lemma \ref{nevlem}, $$\P(|y e^{-\sqrt{2} y} K(y) - W| \geq \eta) < \frac{\varepsilon}{8}.$$  These conditions imply that
\begin{equation}\label{delbound}
\P(ye^{-\sqrt{2} y} K(y) \leq \eta) < \frac{\varepsilon}{4}
\end{equation}
and
\begin{equation}\label{Bbound}
\P(ye^{-\sqrt{2} y} K(y) \geq B) < \frac{\varepsilon}{4}.
\end{equation}
We can also choose $y$ to be large enough that $y \geq 2|\alpha|$ and $B e^{-\sqrt{2} \alpha}/y < \varepsilon/8$.

For $1 \leq i \leq N(\kappa x^2)$ and $0 \leq s \leq \kappa x^2$, let $x_i(s)$ be the position of the particle at time $s$ that is the ancestor of the particle at the location $X_i(\kappa x^2)$ at time $\kappa x^2$.  Let $v_i = \inf\{s: x_i(s) = x - y\}$.   Let $0 < u_1 < \dots < u_{K(y)}$ denote the times at which particles would hit $x - y$, if particles were killed upon reaching $x - y$.  Note that $\{v_1, \dots, v_{N(\kappa x^2)}\} \subset \{u_1, \dots, u_{K(y)}\}$.
Let ${\cal G}$ denote the $\sigma$-field generated by the set of times $\{u_1, \dots, u_{K(y)}\}$.  We can choose a positive number $\rho > 0$, depending on $y$ but not on $x$, such that
\begin{equation}\label{rhoprob}
\P(u_{K(y)} \leq \rho) > 1 - \frac{\varepsilon}{8}.
\end{equation}
Throughout the proof, we will assume that $x$ is large enough that $x \geq y$, so that particles are not killed at the origin before reaching $x - y$, and that $\kappa x^2/2 \geq \rho$, so that with high probability all particles will have reached $x - y$ well before time $\kappa x^2$.

Let
\begin{equation}\label{Mdef}
M(s) = \sum_{i=1}^{N(s)} X_i(s) e^{\sqrt{2} X_i(s)}.
\end{equation}
It is well-known (see, for example, Lemma 2 of \cite{hh07}) that the process $(M(s), s \geq 0)$ is a martingale.
If there is initially a single particle at $x - y$, then by the Optional Sampling Theorem,
the probability that some particle eventually reaches $x + \alpha$ is at most $$\frac{(x-y) e^{\sqrt{2}(x-y)}}{(x + \alpha) e^{\sqrt{2} (x + \alpha)}}.$$  Therefore, conditional on ${\cal G}$, the probability that some descendant of a particle that reaches $x-y$ eventually reaches $x + \alpha$ is at most $$\frac{K(y) (x - y) e^{\sqrt{2}(x-y)}}{(x + \alpha) e^{\sqrt{2} (x + \alpha)}} \leq \frac{e^{-\sqrt{2} \alpha}}{y} \cdot y e^{-\sqrt{2} y} K(y).$$  Thus, the unconditional probability that some descendant of a particle that reaches $x - y$ eventually reaches $x + \alpha$ is at most $$\P(y e^{-\sqrt{2} y} K(y) > B) + \frac{B e^{-\sqrt{2} \alpha}}{y} < \frac{\varepsilon}{4} + \frac{\varepsilon}{8} = \frac{3 \varepsilon}{8}.$$  In particular, $\P(X_1(\kappa x^2) > x + \alpha) \leq \P(u_{K(y)} > \rho) + 3 \varepsilon/8 \leq \varepsilon/2$ for sufficiently large $x$, which by letting $\varepsilon \rightarrow 0$ implies (\ref{rightmost}).

Let $S(\alpha) = \{i: x_i(s) < x + \alpha \mbox{ for all }s \in [v_i, \kappa x^2]\}$.  Then let
$$Y'_{\alpha} = \sum_{i=1}^{N(\kappa x^2)} e^{\sqrt{2} X_i(\kappa x^2)} {\bf 1}_{\{i \in S(\alpha)\}}$$ and
$$Z'_{\alpha} = \sum_{i=1}^{N(\kappa x^2)} e^{\sqrt{2} X_i(\kappa x^2)} \sin \bigg( \frac{\pi X_i(\kappa x^2)}{x + \alpha} \bigg) {\bf 1}_{\{i \in S(\alpha)\}}.$$
The argument in the previous paragraph implies that
\begin{equation}\label{YZsame}
\P\big(Y_{\alpha}' = Y(\kappa x^2) \mbox{ and }Z_{\alpha}' = Z_{\alpha} \big) \geq 1 - \frac{\varepsilon}{2}.
\end{equation}
By the Strong Markov Property, the configuration of particles at time $\kappa x^2$ has the same distribution as the configuration that we would get by starting with $K(y)$ particles at $x-y$ and stopping their descendants at the times $\kappa x^2 - u_i$.  Furthermore, restricting to particles in $S(\alpha)$ is equivalent to killing particles when they reach $x + \alpha$.  Therefore, the tools of Section \ref{stripsec}, with $L = x + \alpha$, can be used to estimate the first and second moments of $Y_{\alpha}'$ and $Z_{\alpha}'$.

We first apply (\ref{Yexp}) with $s = \kappa x^2 - u_i$, which when $u_{K(y)} \leq \rho$ is at least $\kappa x^2/2$.  Because the right-hand side of (\ref{Dineq}) is bounded by a constant when $s$ is of the order $L^2$, it follows from (\ref{Yexp}) that there is a constant $C$, depending on $\kappa$, such that on the event $\{u_{K(y)} \leq \rho\}$,
$$\E[Y_{\alpha}'|{\cal G}] \leq C K(y) e^{\sqrt{2}(x - y)} \sin \bigg( \frac{\pi (x - y)}{x + \alpha} \bigg).$$  Using $\sim$ to denote that the ratio of the two sides tends to one as $x \rightarrow \infty$, we have
\begin{equation}\label{sinasym}
\sin \bigg( \frac{\pi (x - y)}{x + \alpha} \bigg) \sim \frac{\pi(y + \alpha)}{x + \alpha} \sim \frac{\pi (y + \alpha)}{x}.
\end{equation}
Because $y \geq 2|\alpha|$, it follows that there exists a constant $C_8$ such that on the event $\{u_{K(y)} \leq \rho\}$, for sufficiently large $x$, $$\E[Y_{\alpha}'|{\cal G}] \leq C_8 x^{-1} e^{\sqrt{2} x} \cdot y e^{-\sqrt{2} y} K(y).$$  Therefore, choosing $C_5 = 8 C_8 B/\varepsilon$ and using (\ref{Bbound}), (\ref{rhoprob}), and the conditional Markov's inequality,
\begin{align}\label{Yprime}
\P(Y_{\alpha}' \geq C_5 x^{-1} e^{\sqrt{2} x}) &\leq \P(u_{K(y)} > \rho) + \P \bigg( y e^{-\sqrt{2} y} K(y) \geq B \bigg) + \P \bigg(Y_{\alpha}' \geq \frac{8 \E[Y_{\alpha}'|{\cal G}]}{\varepsilon} \bigg) \nonumber \\
&\leq \frac{\varepsilon}{8} + \frac{\varepsilon}{4} + \frac{\varepsilon}{8} = \frac{\varepsilon}{2}.
\end{align}
The result (\ref{Yalpha}) now follows from (\ref{Yprime}) and (\ref{YZsame}).

By (\ref{Zexp}), on the event $\{u_{K(y)} \leq \rho\}$, we have
\begin{align}
&e^{-\pi^2 \kappa x^2/2(x + \alpha)^2} K(y) e^{\sqrt{2} (x - y)} \sin \bigg( \frac{\pi (x - y)}{x + \alpha} \bigg) \nonumber \\
&\qquad \leq \E[Z_{\alpha}|{\cal G}] \leq e^{-\pi^2 (\kappa x^2 - \rho)/2(x + \alpha)^2} K(y) e^{\sqrt{2} (x - y)} \sin \bigg( \frac{\pi (x - y)}{x + \alpha} \bigg). \nonumber
\end{align}
Because (\ref{sinasym}) holds and $e^{-\pi^2 \kappa x^2/2(x+\alpha)^2} \sim e^{-\pi^2 \kappa/2} \sim e^{-\pi^2 (\kappa x^2 - \rho)/2(x + \alpha)^2}$, there are constants $C_9$ and $C_{10}$, depending on $\kappa$, such that
\begin{equation}\label{EZG}
C_9 x^{-1} e^{\sqrt{2} x} \cdot y e^{-\sqrt{2} y} K(y) \leq \E[Z_{\alpha}'|{\cal G}] \leq C_{10} x^{-1} e^{\sqrt{2} x} \cdot y e^{-\sqrt{2} y} K(y)
\end{equation}
when $u_{K(y)} \leq \rho$ for sufficiently large $x$.  Furthermore, by applying Lemma \ref{varZlem} to the configuration with a single particle at $x-y$ at time zero and then summing over the particles, we get $$\mbox{Var}(Z_{\alpha}'|{\cal G}) \leq \frac{C K(y) e^{\sqrt{2} (x - y)} e^{\sqrt{2} (x + \alpha)} \kappa x^2}{2(x + \alpha)^4} \leq \frac{C e^{\sqrt{2} \alpha} \cdot ye^{-\sqrt{2} y} K(y)}{y} \big( x^{-1} e^{\sqrt{2} x} \big)^2$$
for sufficiently large $x$.
By the conditional Chebyshev's Inequality, on the event $\{u_{K(y)} \leq \rho\}$, $$\P \bigg( \big| Z_{\alpha}' - \E[Z_{\alpha}'|{\cal G}] \big| > \frac{1}{2} \E[Z_{\alpha}'|{\cal G}] \bigg| {\cal G} \bigg) \leq \frac{4\mbox{Var}(Z_{\alpha}'|{\cal G})}{(\E[Z_{\alpha}'|{\cal G}])^2} \leq \frac{Ce^{\sqrt{2} \alpha}}{y \cdot y e^{-\sqrt{2} y} K(y)}.$$  In view of (\ref{delbound}), it follows that for $y$ large enough that $Ce^{\sqrt{2} \alpha}/(\eta y) < \varepsilon/8$ and sufficiently large $x$, $$\P \bigg( \big| Z_{\alpha}' - \E[Z_{\alpha}'|{\cal G}] \big| > \frac{1}{2} \E[Z_{\alpha}'|{\cal G}] \bigg) \leq \P(u_{K(y)} > \rho) + \P(y e^{-\sqrt{2} y} K(y) \leq \eta) + \frac{Ce^{\sqrt{2} \alpha}}{\eta y} < \frac{\varepsilon}{2}.$$  Combining this result with (\ref{EZG}), we get that for sufficiently large $x$, the event
$$\frac{C_9}{2} \cdot x^{-1} e^{\sqrt{2} x} \cdot y e^{-\sqrt{2} y} K(y) \leq Z_{\alpha}' \leq \frac{3C_{10}}{2} \cdot x^{-1} e^{\sqrt{2} x} \cdot y e^{-\sqrt{2} y} K(y)$$
holds with probability at least $1 - \varepsilon/2$.  Thus, using (\ref{delbound}) and (\ref{Bbound}), for sufficiently large $x$ we have $$\frac{C_9 \eta}{2} \cdot x^{-1} e^{\sqrt{2} x} \leq Z_{\alpha}' \leq \frac{3BC_{10}}{2} \cdot x^{-1} e^{\sqrt{2} x}$$
with probability at least $1 - \varepsilon$.  The result (\ref{Zalpha}) now follows by setting $C_6 = C_9 \eta/2$ and $C_7 = 3BC_{10}/2$ and invoking (\ref{YZsame}).
\end{proof}

\subsection{A lower bound for $Z(s)$}\label{Zlow}

Let $t = \tau x^3 = 2 \sqrt{2} x^3/(3 \pi^2)$.
For $0 < s < t$, recall that $$L(s) = x \bigg(1 - \frac{3 \pi^2 s}{2 \sqrt{2} x^3} \bigg)^{1/3} = c (t - s)^{1/3}$$ as in (\ref{Ldef}), and let $$Z(s) = \sum_{i=1}^{N(s)} e^{\sqrt{2} X_i(s)} \sin \bigg( \frac{\pi X_i(s)}{L(s)} \bigg) {\bf 1}_{\{X_i(s) \leq L(s)\}}.$$  Our goal in this subsection is to find a lower bound for $Z(s)$.  Such a bound will be provided by Proposition \ref{Zlower} below.

To prove this result, we will consider the following new process, which will also be useful in later subsections.  Fix $\alpha \in \R$, and let $t_{\alpha} = \tau (x + \alpha)^3$, so that $ct_{\alpha}^{1/3} = x + \alpha$, where $c$ is defined in (\ref{cdef}).  For $0 \leq s \leq t_{\alpha}$, let $L_{\alpha}(s) = c(t_{\alpha} - s)^{1/3}$.  Note that $L_0(s) = L(s)$.  Now suppose that, in addition to being killed at the origin, particles to the right of $x + \alpha$ are killed at time $\kappa x^2$, and for $\kappa x^2 < s < t_{\alpha} + \kappa x^2$, particles are killed at time $s$ if they reach $L_{\alpha}(s - \kappa x^2)$.  Let $N_{\alpha}(s)$ be the number of particles alive at time $s$, and let $X_{1,\alpha}(s) \geq \dots \geq X_{N_{\alpha}(s), \alpha}(s)$ denote the positions of these particles at time $s$.  Let $$Z_{\alpha}(s) = \sum_{i=1}^{N_{\alpha}(s)} e^{\sqrt{2} X_{i,\alpha}(s)} \sin \bigg( \frac{\pi X_{i,\alpha}(s)}{L_{\alpha}(s - \kappa x^2)} \bigg).$$  Note that $Z_{\alpha}(\kappa x^2)$ is the same as $Z_{\alpha}$ defined in (\ref{Zadef}).  Also, let
\begin{equation}\label{Yalphadef}
Y_{\alpha}(s) = \sum_{i=1}^{N_{\alpha}(s)} e^{\sqrt{2} X_{i,\alpha}(s)}.
\end{equation}

\begin{Prop}\label{Zlower}
For all $\varepsilon > 0$, there exists a constant $C > 0$ depending on $\kappa$, $\delta$, and $\varepsilon$ such that for sufficiently large $x$,
$$\P\bigg(Z(s) \geq C x^{-1} \exp \big( (3 \pi^2)^{1/3}(t - s)^{1/3} \big) \bigg) > 1 - \varepsilon$$ for all $s \in [2 \kappa x^2, (1 - \delta)t]$.
\end{Prop}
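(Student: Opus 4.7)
The plan is to work in the auxiliary process defined just above the statement, in which particles are killed at $x+\alpha$ on $[0,\kappa x^2]$ and at $L_\alpha(s-\kappa x^2)$ for $s>\kappa x^2$, where $\alpha$ will be chosen as a fixed negative constant depending only on $\kappa,\delta,\varepsilon$. Since each auxiliary particle is also an original particle at the same location (the auxiliary dynamics only kill particles), a high-probability lower bound on $Z_\alpha(s)$ can be turned into one for $Z(s)$ by a termwise comparison of the sine weights. The role of $\alpha$ is twofold: the bounds on $Z_\alpha(\kappa x^2)$ and $Y_\alpha(\kappa x^2)$ from Lemma \ref{smalltime} must feed Proposition \ref{VarZProp} in such a way that the variance factor is small, and the boundary $L_\alpha$ must remain close to $L(s)$ for the final comparison.

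First I would apply Lemma \ref{smalltime} at time $\kappa x^2$: the event $\mathcal A=\{Z_\alpha(\kappa x^2)\ge C_6 x^{-1}e^{\sqrt 2 x}\}\cap\{Y_\alpha(\kappa x^2)\le C_5 x^{-1}e^{\sqrt 2 x}\}$ has probability at least $1-\varepsilon/3$, and conditionally on $\cF_{\kappa x^2}$ it supplies the initial data needed to apply Lemma \ref{EZbound} and Proposition \ref{VarZProp}. Applying Lemma \ref{EZbound} in the auxiliary process started from time $\kappa x^2$ (with horizon $t_\alpha=\tau(x+\alpha)^3$ in place of $t$) yields
\begin{equation*}
\E[Z_\alpha(s)\mid\cF_{\kappa x^2}]\ \ge\ Z_\alpha(\kappa x^2)\,G_{\kappa x^2}(s)\,\exp\bigl(-C_3(t_\alpha-s+\kappa x^2)^{-1/3}\bigr).
\end{equation*}
Unfolding the explicit form of $G$ and using that $L_\alpha(s-\kappa x^2)=L(s)+O(1)$ uniformly for $s\le(1-\delta)t$ (since $\kappa x^2$ is negligible compared to $t-s\asymp x^3$), on $\mathcal A$ this reduces to
\begin{equation*}
\E[Z_\alpha(s)\mid\cF_{\kappa x^2}]\ \ge\ c_1(\kappa,\delta,\alpha)\,x^{-1}\exp\bigl((3\pi^2)^{1/3}(t-s)^{1/3}\bigr),
\end{equation*}
which is already of the target order.

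Next I would apply Proposition \ref{VarZProp} to the same shifted process (with parameters $\kappa',\delta'$ chosen in terms of $\kappa,\delta$ so its hypotheses hold) to obtain $\textup{Var}(Z_\alpha(s)\mid\cF_{\kappa x^2})\le C\,\E[Z_\alpha(s)\mid\cF_{\kappa x^2}]^2\cdot\eta$, where $\eta$ is the parenthesized factor there with $L(0)$ replaced by $L_\alpha(0)=x+\alpha$ and $Z(0),Y(0)$ by $Z_\alpha(\kappa x^2),Y_\alpha(\kappa x^2)$. A direct calculation on $\mathcal A$ gives $\eta\le C'e^{\sqrt 2\alpha}$ for $x$ large, so choosing $\alpha$ a sufficiently large negative constant makes $\eta$ as small as desired. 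Chebyshev's inequality then gives, on $\mathcal A$,
\begin{equation*}
\P\bigl(Z_\alpha(s)\ge\tfrac12\E[Z_\alpha(s)\mid\cF_{\kappa x^2}]\mid\cF_{\kappa x^2}\bigr)\ge 1-\varepsilon/3,
\end{equation*}
hence an unconditional probability at least $1-2\varepsilon/3$ that $Z_\alpha(s)\ge(c_1/2)x^{-1}\exp((3\pi^2)^{1/3}(t-s)^{1/3})$. Finally, since every auxiliary particle is an original particle at the same position, $Z(s)$ is bounded below by the same sum over auxiliary particles with the sine denominator replaced by $L(s)$ and an indicator $\mathbf 1_{X\le L(s)}$ inserted; since $L_\alpha(s-\kappa x^2)$ differs from $L(s)$ by only $O(1)$, a termwise comparison produces a positive constant $c_2$ with $Z(s)\ge c_2 Z_\alpha(s)$, yielding the claimed lower bound.

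The main obstacle is the final comparison between $Z(s)$ and $Z_\alpha(s)$. The two sine functions $\sin(\pi\cdot/L(s))$ and $\sin(\pi\cdot/L_\alpha(s-\kappa x^2))$ vanish at slightly different points, so their pointwise ratio degenerates near the right endpoint, and one cannot simply bound one sine by a constant multiple of the other uniformly in $X$. The resolution is to split the auxiliary particles into a bulk region bounded away from both endpoints, where the ratio is sandwiched between positive constants, and a thin boundary layer whose contribution to $Z_\alpha(s)$ is controlled by a separate moment estimate and shown to be negligible compared to the bulk. The calibration of $\alpha$ is coupled to this comparison: $\alpha$ must be negative enough to force $\eta$ small, but it also enters the constant $c_1$ through the multiplicative factor $e^{\sqrt 2\alpha}$ after shifting, so the interaction of these two constraints governs how large $C$ in the conclusion ultimately has to be.
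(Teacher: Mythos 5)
Your proposal follows the same architecture as the paper's proof: work in the auxiliary process with a large negative constant $\alpha$, feed Lemma \ref{smalltime} into Lemma \ref{EZbound} and Proposition \ref{VarZProp}, choose $\alpha$ negative enough to make the variance factor small, apply Chebyshev's inequality, and finally compare $Z_\alpha(s)$ with $Z(s)$. Up to that last step your argument tracks the paper's.

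However, the ``main obstacle'' you identify in the comparison step is not actually there, and the bulk-plus-boundary-layer decomposition you propose to resolve it is unnecessary. You claim that the pointwise ratio of $\sin(\pi\cdot/L_\alpha(s-\kappa x^2))$ to $\sin(\pi\cdot/L(s))$ degenerates near the right endpoint. It does not, precisely because the choice of $\alpha$ as a large negative constant forces $L_\alpha(s-\kappa x^2) < L(s)$ for $x$ large (with $L_\alpha(s-\kappa x^2) \ge L(s)/2$). The auxiliary particles all lie in $[0, L_\alpha(s-\kappa x^2)]$, and the inequality needed is
\begin{equation*}
\sin\!\Big(\frac{\pi z}{L_\alpha(s-\kappa x^2)}\Big) \;\le\; C\,\sin\!\Big(\frac{\pi z}{L(s)}\Big), \qquad z\in[0,L_\alpha(s-\kappa x^2)].
\end{equation*}
It is the numerator sine (with the \emph{smaller} period) that vanishes at $z=L_\alpha(s-\kappa x^2)$, while the denominator is still bounded below there. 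Writing $L'=L_\alpha(s-\kappa x^2)$ and $L=L(s)$, for $z$ near $L'$ one has $\sin(\pi z/L')\le \pi(L'-z)/L'$ and $\sin(\pi z/L)\ge 2(L-z)/L \ge 2(L'-z)/L$, so the ratio is at most $\pi L/(2L')\le\pi$; an analogous check in the other regimes (e.g., $z\le L'/2$ and $L'/2<z\le L/2$) gives the same uniform bound. Thus the paper simply applies this pointwise inequality directly, together with the inclusion of auxiliary particles among original ones, to conclude $Z_\alpha(s)\le C\,Z(s)$. Your extra moment estimate for a boundary layer adds no content and would only complicate the dependence of constants on $\alpha$.

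One small inaccuracy elsewhere: in the auxiliary process, particles are only culled above $x+\alpha$ at the single time $\kappa x^2$, not throughout $[0,\kappa x^2]$; this does not affect the substance of your argument but is worth stating correctly since it is exactly what allows Lemma \ref{smalltime} to supply the initial data for Lemma \ref{EZbound} and Proposition \ref{VarZProp}.
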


\begin{proof}
We consider the process defined above.  Recall that $({\cal F}_u)_{u \geq 0}$ is the natural filtration associated with the branching Brownian motion.  By Lemma \ref{EZbound} and the Markov property, there exist positive constants $C'$ and $C''$, depending on $\kappa$ and $\delta$, such that for all $s \in [2 \kappa x^2, (1 - \delta/2)t_{\alpha}]$,
$$C' Z_{\alpha} G_0(s - \kappa x^2) \leq \E[Z_{\alpha}(s)|{\cal F}_{\kappa x^2}] \leq C'' Z_{\alpha} G_0(s - \kappa x^2).$$
Because $(t_{\alpha} - (s - \kappa x^2))^{1/3} - (t_{\alpha}-s)^{1/3}$ is bounded by a constant, it follows from (\ref{Gdef}) that
\begin{align}\label{EZalphas}
C' Z_{\alpha} \exp \big( -(3 \pi^2)^{1/3} \big(t_{\alpha}^{1/3} - (t_{\alpha}-s)^{1/3} \big) \big) &\leq \E[Z_{\alpha}(s)|{\cal F}_{\kappa x^2}] \nonumber \\
&\leq C'' Z_{\alpha} \exp \big( -(3 \pi^2)^{1/3} \big(t_{\alpha}^{1/3} - (t_{\alpha}-s)^{1/3} \big) \big).
\end{align}
Likewise, by Proposition \ref{VarZProp},
\begin{align}
\mbox{Var}(Z_{\alpha}(s)|{\cal F}_{\kappa x^2}) &\leq C \E[Z_{\alpha}(s)|{\cal F}_{\kappa x^2}]^2 \bigg( \frac{e^{\sqrt{2} L_{\alpha}(0)}}{L_{\alpha}(0) Z_{\alpha}} + \frac{e^{\sqrt{2} L_{\alpha}(0)} Y(\kappa x^2)}{L_{\alpha}(0)^2 Z_{\alpha}^2} \bigg) \nonumber \\
&= C \E[Z_{\alpha}(s)|{\cal F}_{\kappa x^2}]^2 \bigg( \frac{e^{\sqrt{2} x} e^{\sqrt{2} \alpha}}{(x + \alpha) Z_{\alpha}} + \frac{e^{\sqrt{2} x} e^{\sqrt{2} \alpha} Y(\kappa x^2)}{(x + \alpha)^2 Z_{\alpha}^2} \bigg). \nonumber
\end{align}
Let $A$ be the event that $Y(\kappa x^2) \leq C_5 x^{-1} e^{\sqrt{2} x}$ and $Z_{\alpha} \geq C_6 x^{-1} e^{\sqrt{2} x}$, where $C_5$ and $C_6$ are the constants from Lemma \ref{smalltime} applied with $\varepsilon/8$ in place of $\varepsilon$.  Lemma \ref{smalltime} then gives $\P(A) > 1 - \varepsilon/4$ for sufficiently large $x$.  On $A$, we have
$$\mbox{Var}(Z_{\alpha}(s)|{\cal F}_{\kappa x^2}) \leq C \E[Z_{\alpha}(s)|{\cal F}_{\kappa x^2}]^2 e^{\sqrt{2} \alpha} \bigg( \frac{x}{C_6(x + \alpha)} + \frac{C_5 x}{C_6^2 (x + \alpha)^2} \bigg).$$  Therefore, if $\alpha$ is chosen to be a large enough negative number that $C e^{\sqrt{2} \alpha}/C_6 < \varepsilon/8$, then $\mbox{Var}(Z_{\alpha}(s)|{\cal F}_{\kappa x^2}) \leq (\varepsilon/8) \E[Z_{\alpha}(s)|{\cal F}_{\kappa x^2}]^2$ on $A$ for sufficiently large $x$.  It follows from the conditional Chebyshev's Inequality that for sufficiently large $x$,
\begin{equation}\label{PZalpha}
\P\bigg(Z_{\alpha}(s) < \frac{1}{2} \E[Z_{\alpha}(s)|{\cal F}_{\kappa x^2}] \bigg) \leq \P(A^c) + \frac{4 \varepsilon}{8} < \frac{3\varepsilon}{4}.
\end{equation}
By (\ref{EZalphas}), on $A$ we have
$$\E[Z_{\alpha}(s)|{\cal F}_{\kappa x^2}] \geq C x^{-1} \exp \big( \sqrt{2} x - (3 \pi^2)^{1/3}  \big(t_{\alpha}^{1/3} - (t_{\alpha}-s)^{1/3} \big) \big).$$  Thus, using (\ref{PZalpha}) and the fact that $\P(A^c) < \varepsilon/4$, there is a positive constant $C$ such that for all $s \in [2 \kappa x^2, (1 - \delta/2) t_{\alpha}]$, $$\P \bigg( Z_{\alpha}(s) \geq C x^{-1}  \exp \big( \sqrt{2} x - (3 \pi^2)^{1/3}  \big(t_{\alpha}^{1/3} - (t_{\alpha}-s)^{1/3} \big) \big) \bigg) \geq 1 - \varepsilon$$
for sufficiently large $x$.  Note that $|t_{\alpha}^{1/3} - t^{1/3}|$ is bounded by a constant which depends on $\alpha$, and thus on $\varepsilon$.  Likewise,
\begin{equation}\label{talphabound}
\sup_{\kappa x^2 \leq s \leq (1 - \delta/2)t_{\alpha}} |(t_{\alpha} - s)^{1/3} - (t - s)^{1/3}|
\end{equation}
is bounded by a constant which depends on $\alpha$ and $\delta$.  Furthermore, we have $\sqrt{2} x = (3 \pi^2)^{1/3} t^{1/3}$.  Because $(1 - \delta/2)t_{\alpha} \geq (1 - \delta) t$ for sufficiently large $x$, we obtain the result of the proposition with $Z_{\alpha}(s)$ in place of $Z(s)$, provided that $\alpha$ is a sufficiently large negative number.

To complete the proof, recall that $L(s) = c(t - s)^{1/3}$ and $L_{\alpha}(s - \kappa x^2) = c(t_{\alpha} - s + \kappa x^2)^{1/3}$, where $t = \tau x^3$ and $t_{\alpha} = \tau (x + \alpha)^3$.
Therefore, there is a constant $\alpha_0 < 0$ such that if $\alpha < \alpha_0$, then $L_{\alpha}(s - \kappa x^2) < L(s)$ for sufficiently large $x$.  Also, $L(s)/2 < L_{\alpha}(s - \kappa x^2)$ for sufficiently large $x$.  Thus, if $\alpha < \alpha_0$, there exists a constant $C$ such that for sufficiently large $x$, $$\sin \bigg( \frac{\pi z}{L_{\alpha}(s - \kappa x^2)} \bigg) \leq C \sin \bigg( \frac{\pi z}{L(s)} \bigg)$$
for all $z \in [0, L_{\alpha}(s - \kappa x^2)]$.  Because killing particles at a right boundary can only reduce the number of particles in the system, it follows that if $\alpha < \alpha_0$, then $Z_{\alpha}(s) \leq C Z(s)$ for sufficiently large $x$.  The result follows.
\end{proof}

\subsection{Upper bounds for $Z(s)$ and $Y(s)$}

Recall that $t = \tau x^3$.  The next lemma shows that it is unlikely for any particle ever to get far to the right of $L(s)$ for $s \in [2 \kappa x^2, (1 - \delta)t]$.

\begin{Lemma}\label{killalpha}
Let $\varepsilon > 0$.  For all $\alpha > 0$, let $t_{\alpha} = \tau(x + \alpha)^3$, and let $L_{\alpha}(s) = c(t_{\alpha} - s)^{1/3}$ for $0 \leq s \leq t_{\alpha}$.  Then there exists a positive constant $C_{11}$, depending on $\kappa$, $\delta$, and $\varepsilon$ but not on $\alpha$ or $x$, such that for sufficiently large $x$, $$\P\big(X_1(s) \leq L_{\alpha}(s - \kappa x^2) \mbox{ for all }s \in [\kappa x^2, (1 - \delta)t] \big) \geq 1 - \varepsilon - C_{11} e^{-\sqrt{2} \alpha}.$$
\end{Lemma}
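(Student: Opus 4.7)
The strategy is to apply Lemma \ref{L:sumRj} to each particle alive at time $\kappa x^2$ and conclude via Markov's inequality, after using Lemma \ref{smalltime} to control the configuration at time $\kappa x^2$ uniformly in $\alpha$.

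First, set up the good event. By Lemma \ref{smalltime}, applied with $\varepsilon/3$ in place of $\varepsilon$, for sufficiently large $x$ the event
$$A = \{X_1(\kappa x^2) \leq x + \alpha\} \cap \{Y(\kappa x^2) \leq C_5 x^{-1} e^{\sqrt{2} x}\} \cap \{Z_\alpha \leq C_7 x^{-1} e^{\sqrt{2} x}\}$$
has $\P(A) \geq 1 - \varepsilon$; crucially, $C_5$ and $C_7$ do not depend on $\alpha$.

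Next, consider the auxiliary process in which, in addition to being killed at zero, particles after time $\kappa x^2$ are killed upon reaching $L_\alpha(s - \kappa x^2)$. After the time shift $s \mapsto s - \kappa x^2$, this is exactly the setting of Section \ref{curvedbdry} with $t$ replaced by $t_\alpha$. Let $R$ be the total number of descendants of the particles alive at time $\kappa x^2$ that are killed at this boundary for some $s \in [\kappa x^2, t_\alpha + \kappa x^2]$. On $A$, every $X_i(\kappa x^2) \in (0, L_\alpha(0))$, so the strong Markov property at time $\kappa x^2$ and Lemma \ref{L:sumRj} (applied in each descendant subtree) give
$$\E[R \mid \mathcal{F}_{\kappa x^2}] \leq C'' \sum_{i=1}^{N(\kappa x^2)} \bigl( h_\alpha(X_i(\kappa x^2)) + j_\alpha(X_i(\kappa x^2)) \bigr),$$
where $h_\alpha, j_\alpha$ are the functions $h, j$ of Lemma \ref{L:sumRj} with $t$ replaced by $t_\alpha$. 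Using $c t_\alpha^{1/3} = x + \alpha$ and $(3\pi^2 t_\alpha)^{1/3} = \sqrt{2}(x+\alpha)$, the $h_\alpha$-sum equals exactly $c^{-1}(x+\alpha) e^{-\sqrt{2}(x+\alpha)} Z_\alpha$, while the $j_\alpha$-sum is bounded by $c\, e^{-\sqrt{2}(x+\alpha)} Y(\kappa x^2)$ (using $X_i \leq x + \alpha$ on $A$). Substituting the bounds from $A$ and using $(x+\alpha)/x \leq 2$ for $x$ large, both sums are at most a constant (depending only on $\kappa, \delta, \varepsilon$) times $e^{-\sqrt{2}\alpha}$; thus $\E[R \mid \mathcal{F}_{\kappa x^2}] \leq C e^{-\sqrt{2}\alpha}$ on $A$.

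Finally, the conditional Markov inequality yields $\P(R \geq 1 \mid \mathcal{F}_{\kappa x^2}) \leq C e^{-\sqrt{2}\alpha}$ on $A$. Since $[\kappa x^2, (1-\delta)t] \subset [\kappa x^2, t_\alpha + \kappa x^2]$, the event that some particle crosses $L_\alpha(\cdot - \kappa x^2)$ during $[\kappa x^2, (1-\delta)t]$ is contained in $\{R \geq 1\}$, so combining gives the claimed bound with $C_{11} = C$. The delicate point, and really the only nontrivial one, is to ensure that the constants emerging from Lemma \ref{smalltime} (and hence $C_{11}$) are independent of $\alpha$; once this is secured, the rest is a direct first-moment calculation via Lemma \ref{L:sumRj}.
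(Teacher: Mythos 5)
Your proposal is correct and takes essentially the same approach as the paper's proof: condition on the good event from Lemma \ref{smalltime} (with $\varepsilon/3$ and including $X_1(\kappa x^2) \leq x+\alpha$), apply Lemma \ref{L:sumRj} with $t$ replaced by $t_\alpha$ particle-by-particle at time $\kappa x^2$, use $L_\alpha(0) = x+\alpha$ and $(3\pi^2 t_\alpha)^{1/3} = \sqrt{2}(x+\alpha)$ to express the $h$- and $j$-sums in terms of $Z_\alpha$ and $Y(\kappa x^2)$, and conclude by Markov's inequality. The paper phrases the final step as a probability bound directly rather than explicitly bounding $\P(R \geq 1)$, but the mathematical content is identical, and you correctly flagged the key point that the constants from Lemma \ref{smalltime} are uniform in $\alpha$.
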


\begin{proof}
Suppose there is a particle at the location $z \leq c t_{\alpha}^{1/3}=x+\alpha$ at time $\kappa x^2$.
By Lemma \ref{L:sumRj} with $t = t_{\alpha}$, the probability that a descendant of this particle reaches $L_{\alpha}(s - \kappa x^2)$ for some $s \in [\kappa x^2, (1 - \delta/2)t_{\alpha}]$ is at most
$$C e^{-(3 \pi^2 t_{\alpha})^{1/3}} \bigg( e^{\sqrt{2} z} \sin \bigg( \frac{\pi z}{L_{\alpha}(0)} \bigg) t_{\alpha}^{1/3} + z e^{\sqrt{2} z} t_{\alpha}^{-1/3} \bigg).$$  Therefore, using the bound $z t_{\alpha}^{-1/3} \leq c$ and applying the Markov property, we get that the conditional probability, given ${\cal F}_{\kappa x^2}$, on the event $X_1(\kappa x^2)<x+\alpha,$  that a particle reaches $L_{\alpha}(s - \kappa x^2)$ for $\kappa x^2 \leq s \leq (1 - \delta/2)t_{\alpha}$ is at most
\begin{equation}\label{hitbd}
Ce^{-\sqrt{2}(x + \alpha)}  \left( t_{\alpha}^{1/3} Z_{\alpha}(\kappa x^2)  + Y(\kappa x^2) \right).
\end{equation}
Let $A$ be the event that  $X_1(\kappa x^2)<x+\alpha,$ $Y(\kappa x^2) \leq C_5 x^{-1} e^{\sqrt{2} x}$ and $Z_{\alpha}(\kappa x^2) \leq C_7 x^{-1} e^{\sqrt{2} x}$, where $C_5$ and $C_7$ are the constants from Lemma \ref{smalltime} with $\varepsilon/3$ in place of $\varepsilon$.  On $A$, for sufficiently large $x$,
the expression in (\ref{hitbd}) is at most $$C t_{\alpha}^{1/3} x^{-1} e^{-\sqrt{2} \alpha} + C x^{-1} e^{-\sqrt{2} \alpha} \leq C_{11} e^{-\sqrt{2} \alpha}.$$
Because $\P(A) > 1 - \varepsilon$ for sufficiently large $x$ by Lemma \ref{smalltime} and the fact that $(1 - \delta/2)t_{\alpha} \geq (1 - \delta)t$ for sufficiently large $x$, the result follows.
\end{proof}

The next lemma shows that at any fixed time $s \in [2 \kappa x^2, (1 - \delta)t]$, it is unlikely that there is any particle near or to the right of $L(s)$.

\begin{Lemma}\label{withina}
Let $a > 0$ be a positive constant.  Let $\varepsilon > 0$.  Then for sufficiently large $x$, we have
$$\P(X_1(s) > L(s) - a) < \varepsilon$$ for all $s \in [2 \kappa x^2, (1 - \delta)t]$.
\end{Lemma}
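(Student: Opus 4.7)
The strategy is a first-moment argument on a suitably truncated process, using Lemma \ref{killalpha} to remove the effect of the truncation and Lemma \ref{smalltime} to control the configuration at time $\kappa x^2$. Fix $a>0$ and $\varepsilon>0$. First, apply Lemma \ref{killalpha} with $\varepsilon/4$ in place of $\varepsilon$ and choose $\alpha>0$ so large that $C_{11}e^{-\sqrt{2}\alpha}<\varepsilon/4$; then the event $A$ that $X_1(u)\le L_\alpha(u-\kappa x^2)$ for all $u\in[\kappa x^2,(1-\delta)t]$ has $\P(A)\ge 1-\varepsilon/2$. Next, apply Lemma \ref{smalltime} with $\varepsilon/4$ to get an $\mathcal{F}_{\kappa x^2}$-measurable event $B$ with $\P(B)\ge 1-\varepsilon/4$ on which $Z_\alpha \le C_7 x^{-1}e^{\sqrt{2}x}$.

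On $A$, the set of particles alive at time $s$ agrees with that of the modified process that additionally kills at $L_\alpha(u-\kappa x^2)$ for $u\ge \kappa x^2$. Viewed from time $\kappa x^2$, this modified process is a branching Brownian motion with curved right boundary exactly of the type analyzed in Section \ref{curvedbdry}, with $t$ replaced by $t_\alpha$ and initial configuration given by the particles alive at time $\kappa x^2$. By the Markov property, branching independence, and Lemma \ref{densityprop} (applied with, say, $b=\kappa/2$, which is legitimate since $s-\kappa x^2\ge \kappa x^2\ge b(x+\alpha)^2$ and $t_\alpha-(s-\kappa x^2)\ge \delta\tau x^3/2\ge A$ for large $x$), the conditional expected number of particles of the modified process in $[L(s)-a,L_\alpha(s-\kappa x^2)]$ at time $s$, given $\mathcal{F}_{\kappa x^2}$, is bounded above by
\[
C\, Z_\alpha\cdot \frac{1}{L_\alpha(s-\kappa x^2)}\,e^{-\sqrt{2}(x+\alpha-L_\alpha(s-\kappa x^2))}\cdot \int_{L(s)-a}^{L_\alpha(s-\kappa x^2)} e^{-\sqrt{2}y}\sin\!\left(\frac{\pi y}{L_\alpha(s-\kappa x^2)}\right)dy,
\]
where I have used that the sum $\sum_i e^{\sqrt{2}X_i(\kappa x^2)}\sin(\pi X_i(\kappa x^2)/L_\alpha(0))$ equals $Z_\alpha$ and that $(3\pi^2)^{1/3}=\sqrt{2}c$ gives the identity $(3\pi^2)^{1/3}(t_\alpha^{1/3}-(t_\alpha-s+\kappa x^2)^{1/3})=\sqrt{2}(x+\alpha-L_\alpha(s-\kappa x^2))$.

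The integral is handled by the substitution $u=L_\alpha(s-\kappa x^2)-y$. Since for $s\in[2\kappa x^2,(1-\delta)t]$ one has $L(s)\ge c(\delta\tau)^{1/3}x$, a short algebraic check shows $L_\alpha(s-\kappa x^2)-L(s)$ is bounded by a constant $M=M(\alpha,\kappa,\delta)$. Using $\sin(\pi y/L_\alpha(s-\kappa x^2))\le \pi u/L_\alpha(s-\kappa x^2)$ near the boundary, the integral is at most $C\,L_\alpha(s-\kappa x^2)^{-1}e^{-\sqrt{2}L_\alpha(s-\kappa x^2)}$, with the constant depending on $a,\alpha,\kappa,\delta$. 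Substituting back, on $B$ the conditional expected number is at most
\[
C\, \frac{Z_\alpha\, e^{-\sqrt{2}(x+\alpha)}}{L_\alpha(s-\kappa x^2)^2}\le \frac{C\,e^{-\sqrt{2}\alpha}}{x\,L(s)^2}\le \frac{C\,e^{-\sqrt{2}\alpha}}{x^{3}}\longrightarrow 0.
\]

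Finally decompose
$\P(X_1(s)>L(s)-a)\le \P(A^c)+\P(B^c)+\P(\{X_1(s)>L(s)-a\}\cap A\cap B)$. On $A\cap B$, the event $\{X_1(s)>L(s)-a\}$ forces the modified process to have a particle in $[L(s)-a,L_\alpha(s-\kappa x^2)]$ at time $s$, and the third term is bounded by the conditional Markov inequality applied to the estimate above, giving an $o(1)$ contribution uniformly in $s$. The total is less than $\varepsilon$ for large $x$. The main obstacle in carrying this out is the careful bookkeeping of the boundary: checking that $L_\alpha(s-\kappa x^2)$ differs from $L(s)$ only by a constant on the relevant range of $s$, so that the integral near the right boundary is truly $O(1)$, and that the hypotheses of Lemma \ref{densityprop} hold uniformly in $s\in[2\kappa x^2,(1-\delta)t]$.
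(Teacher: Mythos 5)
Your proposal is essentially the paper's own proof: you reduce to the modified process killed at $L_\alpha(s-\kappa x^2)$ via Lemma \ref{killalpha}, control $Z_\alpha$ at time $\kappa x^2$ via Lemma \ref{smalltime}, bound the conditional expected number of particles in $[L(s)-a,\,L_\alpha(s-\kappa x^2)]$ by $Cx^{-3}$ via Lemma \ref{densityprop} and the boundary integral estimate, and finish with conditional Markov. The only cosmetic differences are bookkeeping choices (your event $A$ absorbs the $\kappa x^2$-time truncation directly through $L_\alpha(0)=x+\alpha$, where the paper splits off (\ref{rightmost}) explicitly, and you split $\varepsilon$ slightly differently); the substance is identical.
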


\begin{proof}
We consider the process defined at the beginning of Section \ref{Zlow} in which at time $\kappa x^2$, particles to the right of $x + \alpha$ are killed, and for $\kappa x^2 < s < t_{\alpha} + \kappa x^2$, particles are killed at time $s$ if they reach $L_{\alpha}(s - \kappa x^2)$.  By (\ref{rightmost}), for sufficiently large $x$, the probability that some particle is killed at time $\kappa x^2$ is at most $\varepsilon/4$.  By applying Lemma \ref{killalpha} with $\varepsilon/4$ in place of $\varepsilon$ and choosing $\alpha > 0$ large enough that $C_{11} e^{-\sqrt{2} \alpha} < \varepsilon/4$, we get that the probability that a particle is killed between times $\kappa x^2$ and $(1 - \delta)t$ is at most $\varepsilon/2$.  Thus, with probability at least $1 - 3 \varepsilon/4$, no particle is killed until at least time $(1 - \delta)t$.

Suppose $s \in [2 \kappa x^2, (1 - \delta)t]$.  Let $K_{\alpha}(s)$ be the number of particles at time $s$ between $L(s) - a$ and $L_{\alpha}(s - \kappa x^2)$.
By Lemma \ref{densityprop} with $t_{\alpha}$ in place of $t$, we have
\begin{align}
\E[K_{\alpha}(s)|{\cal F}_{\kappa x^2}] &\leq C t_{\alpha}^{-1/3} e^{-(3 \pi^2)^{1/3}(t_{\alpha}^{1/3} - (t_{\alpha} - s + \kappa x^2)^{1/3})} Z_{\alpha} \nonumber \\
&\qquad \times \int_{L(s) - a}^{L_{\alpha}(s - \kappa x^2)} e^{-\sqrt{2} y} \sin \bigg( \frac{\pi y}{L_{\alpha}(s - \kappa x^2)} \bigg) \: dy. \nonumber
\end{align}
For sufficiently large $x$, the expression $L_{\alpha}(s - \kappa x^2) - (L(s) - a) = c(t_{\alpha} - s + \kappa x^2)^{1/3} - c(t - s)^{1/3} + a$ is bounded above by a constant depending on $\alpha$ and $a$, and thus
$$\int_{L(s) - a}^{L_{\alpha}(s - \kappa x^2)} e^{-\sqrt{2} y} \sin \bigg( \frac{\pi y}{L_{\alpha}(s - \kappa x^2)} \bigg) \: dy
\leq \frac{C e^{-\sqrt{2} L_{\alpha}(s - \kappa x^2)}}{L_{\alpha}(s - \kappa x^2)} \leq C t_{\alpha}^{-1/3} e^{-\sqrt{2} L_{\alpha}(s - \kappa x^2)}.$$
Therefore, on the event that $Z_{\alpha} \leq C_7 x^{-1} e^{\sqrt{2} x}$, where $C_7$ is the constant from Lemma \ref{smalltime} with $\varepsilon/8$ in place of $\varepsilon$, for sufficiently large $x$,
\begin{align}
&\E[K_{\alpha}(s)|{\cal F}_{\kappa x^2}] \nonumber \\
&\qquad \leq C t_{\alpha}^{-2/3} x^{-1} \exp \big( \sqrt{2} x -(3 \pi^2)^{1/3}\big(t_{\alpha}^{1/3} - (t_{\alpha} - s + \kappa x^2)^{1/3} \big) - \sqrt{2} L_{\alpha}(s - \kappa x^2) \big) \nonumber \\
&\qquad \leq C x^{-3} \exp \big( \sqrt{2} x - \sqrt{2}(x + \alpha) + (3 \pi^2)^{1/3} (t_{\alpha} - s + \kappa x^2)^{1/3} - (3 \pi^2)^{1/3}(t_{\alpha} - s + \kappa x^2)^{1/3} \big) \nonumber \\
&\qquad \leq C x^{-3} \nonumber
\end{align}
because the exponential is a constant which depends on $\alpha$.  Therefore, by the conditional Markov's Inequality and Lemma \ref{smalltime}, for sufficiently large $x$, $$\P(K_{\alpha}(s) > 0) \leq \P(Z_{\alpha} > C_7 x^{-1} e^{\sqrt{2} x}) + C x^{-3} < \frac{\varepsilon}{8} + \frac{\varepsilon}{8} = \frac{\varepsilon}{4}.$$
Because with probability at least $1 - 3 \varepsilon/4$, no particle is killed until at least time $(1 - \delta)t$, it follows that for sufficiently large $x$, we have $\P(X_1(s) > L(s) - a) < \varepsilon$ for all $s \in [2 \kappa x^2, (1 - \delta)t]$.
\end{proof}

\begin{Prop}\label{Zupper}
For all $\varepsilon > 0$, there exists a constant $C > 0$ depending on $\kappa$, $\delta$, and $\varepsilon$ such that for sufficiently large $x$,
$$\P\bigg(Z(s) \leq C x^{-1} \exp \big( (3 \pi^2)^{1/3}(t - s)^{1/3} \big) \bigg) > 1 - \varepsilon$$ for all $s \in [2 \kappa x^2, (1 - \delta)t]$.
\end{Prop}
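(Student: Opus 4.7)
The plan is to mirror the proof of Proposition \ref{Zlower}, but now taking $\alpha>0$ large (so that the auxiliary curve $L_\alpha(s-\kappa x^2)$ lies slightly to the \emph{right} of $L(s)$ rather than to the left) and replacing the second-moment analysis by the one-sided first-moment bound of Lemma \ref{EZbound} combined with a conditional Markov inequality. Fix $\varepsilon>0$ and choose $\alpha>0$ large enough that $C_{11}e^{-\sqrt 2\alpha}<\varepsilon/4$, where $C_{11}$ is the constant of Lemma \ref{killalpha}. Consider the modified process introduced in Section \ref{Zlow} with this positive value of $\alpha$. By Lemma \ref{killalpha} (applied with $\varepsilon/4$ in place of $\varepsilon$), the event $E$ that no particle is killed at the right boundary $L_\alpha(\cdot-\kappa x^2)$ during $[\kappa x^2,(1-\delta)t]$ satisfies $\P(E)\ge 1-\varepsilon/2$ for large $x$. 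On $E$ the original branching Brownian motion and the $\alpha$-process have identical particles up to time $(1-\delta)t$, and all such particles lie below $L_\alpha(s-\kappa x^2)$.

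Next, observe that the ratio $L_\alpha(s-\kappa x^2)/L(s)=((t_\alpha-s+\kappa x^2)/(t-s))^{1/3}$ is bounded above by a constant (in fact tends to $1$) uniformly in $s\in[2\kappa x^2,(1-\delta)t]$ as $x\to\infty$, since the difference $t_\alpha-t+\kappa x^2=O(x^2)$ is negligible compared to $t-s\ge\delta t$ of order $x^3$. An elementary estimate then gives $\sin(\pi z/L(s))\le C\sin(\pi z/L_\alpha(s-\kappa x^2))$ for every $z\in[0,L(s)]$, and hence $Z(s)\le CZ_\alpha(s)$ on $E$. It therefore suffices to bound $Z_\alpha(s)$ from above.

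For this, apply Lemma \ref{EZbound} to the $\alpha$-process (shifted so that time $\kappa x^2$ plays the role of $r=0$ and $t_\alpha$ that of $t$): there is a constant $C$ depending on $\delta$ with
\[
\E[Z_\alpha(s)\mid\mathcal F_{\kappa x^2}]\le CZ_\alpha\exp\bigl(-(3\pi^2)^{1/3}\bigl(t_\alpha^{1/3}-(t_\alpha-s+\kappa x^2)^{1/3}\bigr)\bigr).
\]
Let $A=\{Z_\alpha\le C_7x^{-1}e^{\sqrt 2x}\}$, where $C_7$ comes from Lemma \ref{smalltime} with $\varepsilon/8$ in place of $\varepsilon$, so that $\P(A)\ge 1-\varepsilon/8$. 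Using $\sqrt 2x=(3\pi^2)^{1/3}t^{1/3}$, the bound $|t_\alpha^{1/3}-t^{1/3}|=\tau^{1/3}\alpha$, and the fact that the quantity in (\ref{talphabound}) is bounded by a constant depending on $\alpha,\delta$, we obtain on $A$
\[
\E[Z_\alpha(s)\mid\mathcal F_{\kappa x^2}]\le C'x^{-1}\exp\bigl((3\pi^2)^{1/3}(t-s)^{1/3}\bigr),
\]
with $C'$ depending on $\kappa,\delta,\varepsilon$. The conditional Markov inequality then gives $Z_\alpha(s)\le(4/\varepsilon)C'x^{-1}\exp((3\pi^2)^{1/3}(t-s)^{1/3})$ with conditional probability at least $1-\varepsilon/4$ on $A$. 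Combining with $\P(E^c)\le\varepsilon/2$, $\P(A^c)\le\varepsilon/8$, and $Z(s)\le CZ_\alpha(s)$ on $E$, we conclude.

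The main (and only genuine) technical points are the sine comparison of the second paragraph and the conversion of the exponent from the $t_\alpha$-frame to the $t$-frame in the third; both are routine continuity estimates for $u\mapsto(t-u)^{1/3}$ on the range $u\le(1-\delta)t$, handled exactly as in the closing paragraph of the proof of Proposition \ref{Zlower}. Because we need only a one-sided bound on $Z(s)$, the delicate second-moment estimate of Proposition \ref{VarZProp} is avoided entirely; this is what makes the upper bound noticeably easier than the lower bound.
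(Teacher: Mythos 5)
Your proof is correct and follows essentially the same route as the paper's: kill at the outer curve $L_\alpha(\cdot-\kappa x^2)$ with $\alpha>0$ large, couple via Lemma \ref{killalpha}, compare the two sine weights, bound $\E[Z_\alpha(s)\mid\mathcal F_{\kappa x^2}]$ by Lemma \ref{EZbound} together with the Lemma \ref{smalltime} control on $Z_\alpha$, and finish with conditional Markov. The one small (and legitimate) simplification over the paper is that you never invoke Lemma \ref{withina}: the indicator $\mathbf{1}_{\{X_i(s)\le L(s)\}}$ already built into the definition of $Z(s)$ makes the comparison $Z(s)\le CZ_\alpha(s)$ hold termwise on $E$ without needing to rule out particles above $L(s)$, whereas the paper adds that extra step.
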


\begin{proof}
We again work with the process defined at the beginning of Section \ref{Zlow}.  By (\ref{EZalphas}) and the conditional Markov's Inequality, there is a constant $C$ depending on $\kappa$, $\delta$, and $\varepsilon$ such that for all $s \in [2 \kappa x^2, (1 - \delta)t]$, $$\P\bigg(Z_{\alpha}(s) \leq C Z_{\alpha} \exp \big( -(3 \pi^2)^{1/3} (t_{\alpha}^{1/3} - (t_{\alpha}-s)^{1/3}) \big) \bigg) > 1 - \frac{\varepsilon}{4}.$$  Therefore, by (\ref{Zalpha}), for all $s \in [2 \kappa x^2, (1 - \delta)t]$, $$\P\bigg(Z_{\alpha}(s) \leq C x^{-1} \exp \big( \sqrt{2} x -(3 \pi^2)^{1/3} (t_{\alpha}^{1/3} - (t_{\alpha}-s)^{1/3}) \big) \bigg) > 1 - \frac{\varepsilon}{2}$$ for sufficiently large $x$.  Because $|t_{\alpha}^{1/3} - t^{1/3}|$ and the expression in (\ref{talphabound}) are bounded by constants depending on $\alpha$ and $\sqrt{2} x = (3 \pi^2t)^{1/3}$, it follows that
\begin{equation}\label{Zalphabd}
\P\bigg(Z_{\alpha}(s) \leq C x^{-1} \exp \big( (3 \pi^2)^{1/3}(t-s)^{1/3} \big) \bigg) > 1 - \frac{\varepsilon}{2}
\end{equation}
for sufficiently large $x$.

From Lemma \ref{killalpha} with $\varepsilon/8$ in place of $\varepsilon$, we see that with probability at least $1 - \varepsilon/8 - C_{11}e^{-\sqrt{2} \alpha}$, no particles are killed between times $\kappa x^2$ and $(1 - \delta)t$.  Therefore, if $\alpha$ is chosen large enough that $C_{11}e^{-\sqrt{2} \alpha} < \varepsilon/8$, then with probability at least $1 - \varepsilon/4$, we have $N_{\alpha}(s) = N(s)$ and $X_i(s) = X_{i,\alpha}(s)$ for $i = 1, \dots, N(s)$.  Furthermore, provided $\alpha$ is also large enough that $L_{\alpha}(s - \kappa x^2) \geq L(s)$, for sufficiently large $x$ it holds that for $0 \leq x \leq L(s)$, we have
$$\sin \bigg( \frac{\pi x}{L_{\alpha}(s - \kappa x^2)} \bigg) \geq C \sin \bigg( \frac{\pi x}{L(s)} \bigg)$$ for some positive constant $C$.  By Lemma \ref{withina}, for sufficiently large $x$ the probability that $X_1(s) > L(s)$ is less than $\varepsilon/4$.  It follows that for sufficiently large $x$, we have $Z_{\alpha}(s) \geq C Z(s)$ with probability at least $1 - \varepsilon/2$.  Combining this observation with (\ref{Zalphabd}) yields the result.
\end{proof}

\begin{Prop}\label{Yupper}
For all $\varepsilon > 0$, there exists a constant $C > 0$ depending on $\kappa$, $\delta$, and $\varepsilon$ such that for sufficiently large $x$,
$$\P\bigg(Y(s) \leq C x^{-1} \exp \big( (3 \pi^2)^{1/3}(t - s)^{1/3} \big) \bigg) > 1 - \varepsilon$$ for all $s \in [2 \kappa x^2, (1 - \delta)t]$.
\end{Prop}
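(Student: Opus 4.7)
The proof will closely parallel that of Proposition \ref{Zupper}. We again work with the $\alpha$-process defined at the start of Section \ref{Zlow}, in which particles are killed at $0$, at $x+\alpha$ on $[0,\kappa x^2]$, and at $L_\alpha(s-\kappa x^2)$ on $[\kappa x^2, t_\alpha + \kappa x^2]$. By Lemma \ref{killalpha}, for $\alpha$ chosen sufficiently large (depending on $\varepsilon$), the event that no particle is killed at the right boundary between times $\kappa x^2$ and $(1-\delta)t$ has probability at least $1 - \varepsilon/2$; on that event, $Y(s) = Y_\alpha(s)$ for every $s \in [\kappa x^2, (1-\delta)t]$. Thus it suffices to prove the bound for $Y_\alpha(s)$.

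The key step is to bound $\E[Y_\alpha(s)\mid{\cal F}_r]$ at a well-chosen intermediate time $r$. Set $r = \max\{\kappa x^2, s - Bx^2\}$, where $B$ is a large positive constant. Because the curved boundary $L_\alpha(\cdot - \kappa x^2)$ is decreasing, replacing it by the constant boundary $\ell := L_\alpha(r-\kappa x^2)$ on the interval $[r,s]$ can only add particles, so (\ref{Yexp}) applied with $L = \ell$ yields
$$\E[Y_\alpha(s)\mid{\cal F}_r] \leq \frac{4}{\pi}\bigl(1 + |D(s-r)|\bigr)\, e^{-\pi^2(s-r)/(2\ell^2)}\, Z_\alpha(r).$$
Taking $B$ large (and using $\ell \leq x + \alpha$) makes $(s-r)/\ell^2$ bounded below by a positive constant, so by (\ref{Dineq}) we have $|D(s-r)| \leq C$.

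We then bound $Z_\alpha(r)$. If $r = \kappa x^2$, Lemma \ref{smalltime} gives $Z_\alpha \leq C_7 x^{-1} e^{\sqrt{2} x}$ with probability at least $1 - \varepsilon/4$. If instead $r = s - Bx^2 > \kappa x^2$, then the intermediate bound (\ref{Zalphabd}) obtained in the proof of Proposition \ref{Zupper} gives $Z_\alpha(r) \leq C x^{-1}\exp((3\pi^2)^{1/3}(t-r)^{1/3})$ with probability at least $1 - \varepsilon/4$; the two bounds agree up to a constant because $\sqrt{2}x = (3\pi^2)^{1/3} t^{1/3}$ and $t^{1/3} - (t - \kappa x^2)^{1/3}$ is bounded.

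The main obstacle is showing that the two exponential factors combine correctly to produce the claimed bound, i.e.\ that
$$(3\pi^2)^{1/3}\bigl((t-r)^{1/3} - (t-s)^{1/3}\bigr) \leq \frac{\pi^2(s-r)}{2\ell^2} + C,$$
uniformly for $s \in [2\kappa x^2, (1-\delta)t]$. The leading-order matching relies on the algebraic identity $\pi^2/(2c^2) = (3\pi^2)^{1/3}/3$, which follows from $c^2 = (3\pi^2)^{2/3}/2$; this guarantees that the decay rate $\pi^2/(2\ell^2)$ of the comparison process coincides with $-\frac{d}{ds}[(3\pi^2)^{1/3}(t-s)^{1/3}]$ whenever $\ell \approx c(t-s)^{1/3}$. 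A Taylor expansion (in $Bx^2/(t-s)$ when $r = s-Bx^2$, and directly in $s/t$ when $r = \kappa x^2$, using that $(s-\kappa x^2)/(x+\alpha)^2$ and $s/t$ are of comparable scale) shows the remainder, as well as any discrepancy between $\ell$ and $c(t-r)^{1/3}$, contributes only $O(1)$ to the exponent. With this inequality in hand, the conditional Markov inequality applied at threshold $(4/\varepsilon)\E[Y_\alpha(s)\mid {\cal F}_r]$ places $Y_\alpha(s)$ below $C x^{-1}\exp((3\pi^2)^{1/3}(t-s)^{1/3})$ on an event of probability at least $1 - \varepsilon/4$; intersecting with the good events from Lemma \ref{killalpha} and the bound on $Z_\alpha(r)$ gives the proposition.
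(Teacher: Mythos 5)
Your proposal is a correct route to Proposition~\ref{Yupper}, but it is genuinely different from the paper's and considerably more roundabout. The paper conditions once, at time $\kappa x^2$, and appeals directly to the curved-boundary density estimate, Lemma~\ref{densityprop} (with $t_\alpha$ in place of $t$): integrating $e^{\sqrt{2}y}\psi_{s-\kappa x^2}(\cdot,y)$ over $y$, the factor $e^{-\sqrt{2}y}$ cancels and the $\sin$ integrates to $O(L_\alpha(s-\kappa x^2))$, giving immediately $\E[Y_\alpha(s)\mid\cF_{\kappa x^2}] \leq C\,e^{-(3\pi^2)^{1/3}(t_\alpha^{1/3}-(t_\alpha-s+\kappa x^2)^{1/3})}Z_\alpha$, after which one applies conditional Markov and Lemma~\ref{smalltime}, exactly in parallel to the $Z$ bound. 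You instead avoid Lemma~\ref{densityprop} entirely, introduce an intermediate time $r=\max\{\kappa x^2,\,s-Bx^2\}$, dominate the curved boundary by the constant boundary $\ell=L_\alpha(r-\kappa x^2)$ on $[r,s]$, and invoke the flat-strip formula (\ref{Yexp}). The two decay rates $\pi^2/(2\ell^2)$ and $\frac{d}{ds}\big[(3\pi^2)^{1/3}(t-s)^{1/3}\big]$ then have to be reconciled, and your identity $\pi^2/(2c^2)=(3\pi^2)^{1/3}/3$ together with a Taylor expansion does make the discrepancy $O(1)$ over a window of length $O(x^2)$; you are right that taking $r=\kappa x^2$ uniformly for all $s$ would fail, since the flat-boundary exponent is linear in $s$ while the curved one is concave, and the gap would grow without bound. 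So the intermediate time is essential to your argument, whereas the paper's choice of tool removes the need for it.

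Two small points should be tightened. First, when $s\in[(B+\kappa)x^2,(B+2\kappa)x^2)$ your $r=s-Bx^2$ lies in $[\kappa x^2,2\kappa x^2)$, which is outside the range over which (\ref{Zalphabd}) is stated; the derivation of (\ref{Zalphabd}) does extend there (or one may just keep $r=\kappa x^2$ until $s\ge(B+2\kappa)x^2$), but as written you are citing a bound at a time where it is not literally asserted. Second, the parenthetical claim that ``$(s-\kappa x^2)/(x+\alpha)^2$ and $s/t$ are of comparable scale'' is misleading: the first is $O(1)$ while the second is $O(1/x)$. What actually makes the Taylor expansion work in that regime is that the dimensionless increment $(s-\kappa x^2)/(t-\kappa x^2)=O(1/x)$ is small (and is multiplied by $(t-\kappa x^2)^{1/3}\sim x$), so the first-order term matches $\pi^2(s-\kappa x^2)/(2x^2)$ and the remainder is $O(1/x)$. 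With those clarifications your argument is sound; it just pays a higher price in bookkeeping than the paper does, since the one-shot use of Lemma~\ref{densityprop} absorbs the exponent-matching computation into a lemma already proved.
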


\begin{proof}
We again work with the process defined at the beginning of Section \ref{Zlow}.  Recall the definition of $Y_{\alpha}(s)$ from (\ref{Yalphadef}).  By Lemma \ref{killalpha}, we can choose $\alpha > 0$ sufficiently large that with probability at least $1 - \varepsilon/2$, we have $X_1(s) \leq c(t_{\alpha} - s + \kappa x^2)^{1/3}$ for all $s \in [\kappa x^2, (1 - \delta)t_{\alpha}]$.  Therefore, for all $s \in [2 \kappa x^2, (1 - \delta)t]$, we have $\P(Y_{\alpha}(s) = Y(s)) > 1 - \varepsilon/2$.

By Lemma \ref{densityprop} with $t_{\alpha}$ in place of $t$, for all $s \in [2 \kappa x^2, (1 - \delta)t]$,
\begin{align}
\E[Y_{\alpha}(s)|{\cal F}_{\kappa x^2}] &\leq \frac{C}{L_{\alpha}(s - \kappa x^2)} e^{-(3 \pi^2)^{1/3}(t_{\alpha}^{1/3} - (t_{\alpha} - s + \kappa x^2)^{1/3})} Z_{\alpha} \int_0^{L_{\alpha}(s - \kappa x^2)} \sin \bigg( \frac{\pi y}{L_{\alpha}(s - \kappa x^2)} \bigg) \: dy \nonumber \\
&\leq C e^{-(3 \pi^2)^{1/3}(t_{\alpha}^{1/3} - (t_{\alpha} - s + \kappa x^2)^{1/3})} Z_{\alpha}. \nonumber
\end{align}
By combining this result with the conditional Markov's inequality and (\ref{Zalpha}), we get that there is a constant $C$ such that for sufficiently large $x$, $$\P \bigg( Y_{\alpha}(s) \leq C x^{-1} e^{\sqrt{2} x} e^{-(3 \pi^2)^{1/3}(t_{\alpha}^{1/3} - (t_{\alpha} - s + \kappa x^2)^{1/3})} \bigg) > 1 - \frac{\varepsilon}{2}$$
for all $s \in [2 \kappa x^2, (1 - \delta)t]$.  Because $|(t_{\alpha} - s - \kappa x^2)^{1/3} - (t - s)^{1/3}|$ is bounded by a constant depending on $\alpha$ and $(3 \pi^2)^{1/3} t_{\alpha}^{1/3} = \sqrt{2} (x - \alpha)$, there is a constant $C$ depending on $\alpha$ such that
$$\P\bigg(Y_{\alpha}(s) \leq C x^{-1} \exp \big( (3 \pi^2)^{1/3}(t - s)^{1/3} \big) \bigg) > 1 - \frac{\varepsilon}{2}$$ for all $s \in [2 \kappa x^2, (1 - \delta)t]$.  The result follows because $\P(Y_{\alpha}(s) = Y(s)) > 1 - \varepsilon/2$.
\end{proof}

\subsection{Moments of functions of branching Brownian motion}\label{momfunsec}

Suppose $\kappa > 0$ and $\delta > 0$.  Let $\varepsilon > 0$.  Choose a constant $B > 0$ sufficiently large that if $s = BL^2$, the right-hand side of (\ref{Dineq}) is at most $\varepsilon$.  Now fix a time $s$ such that $$(B + 3 \kappa)x^2 \leq s \leq (1 - \delta)t.$$
Let $f: [0, \infty) \rightarrow \R$ and $\phi: [0,1] \rightarrow \R$ be bounded continuous functions.  Let $\|f\| = \sup_{x \geq 0} |f(x)|$ and $\|\phi\| = \sup_{0 \leq x \leq 1} |\phi(x)|$.  We are interested here in the quantities
\begin{equation}\label{sumfX}
\sum_{i=1}^{N(s)} f(X_i(s)).
\end{equation}
and
\begin{equation}\label{sumphiX}
\sum_{i=1}^{N(s)} e^{\sqrt{2} X_i(s)} \phi\bigg( \frac{X_i(s)}{L(s)} \bigg) {\bf 1}_{\{X_i(s) < L(s)\}}.
\end{equation}

Let $r = s - Bx^2$.  Let $A$ be the event that $X_1(u) \leq L(s)$ for all $u \in [r, s]$.
By Proposition \ref{Yupper}, there is a positive constant $C$ such that
\begin{equation}\label{811}
\P \bigg( Y(r) \leq C x^{-1} \exp((3 \pi^2)^{1/3}(t - r)^{1/3}) \bigg) > 1 - \varepsilon
\end{equation}
for sufficiently large $x$.  Because $L(r) - L(s)$ is bounded above by a constant, Lemma \ref{withina} implies that
\begin{equation}\label{812}
\P(X_1(r) \leq L(s)) > 1 - \varepsilon
\end{equation}
for sufficiently large $x$.  Because $M(r)$, as defined in (\ref{Mdef}), is bounded by $X_1(r)Y(r)$, we have
$$M(r) \leq C L(s) x^{-1} \exp((3 \pi^2)^{1/3}(t - r)^{1/3})$$ when the events in (\ref{811}) and (\ref{812}) both occur.
By the Optional Sampling Theorem, the probability, conditional on ${\cal F}_r$, that some particle reaches $L(s)$ between times $r$ and $s$ is at most $M(r)/(L(s) e^{\sqrt{2} L(s)})$.  Therefore,
\begin{equation}\label{821}
\P(A^c) \leq 2 \varepsilon + C x^{-1} \exp \big( (3 \pi^2)^{1/3}(t - r)^{1/3} - \sqrt{2} L(s) \big).
\end{equation}
Because $\sqrt{2} L(s) = (3 \pi^2)^{1/3} (t - s)^{1/3}$, the exponential on the right-hand side of (\ref{821}) is bounded by a constant.  Therefore, the second term on the right-hand side of (\ref{821}) tends to zero as $x \rightarrow \infty$, and thus $\P(A^c) < 3 \varepsilon$ for sufficiently large $x$.

Let $S$ be the set of all $i \in \{1, \dots, N(s)\}$ such that for all $u \in [r, s]$, the particle at time $u$ that is the ancestor of the particle at $X_i(s)$ at time $s$ is positioned to the left of $L(s)$.  We will work with the quantities $$X(f) = \sum_{i=1}^{N(s)} f(X_i(s)) {\bf 1}_{\{i \in S\}}$$ and $$X'(\phi) = \sum_{i=1}^{N(s)} e^{\sqrt{2} X_i(s)} \phi \bigg( \frac{X_i(s)}{L(s)} \bigg) {\bf 1}_{\{i \in S\}}.$$  Note that $X(f)$ and $X'(\phi)$ equal the sums in (\ref{sumfX}) and (\ref{sumphiX}) respectively on the event $A$, so we have the following result.

\begin{Lemma}\label{fphi}
Suppose $\varepsilon$, $B$, $r$, and $s$ are as defined above.  Then for sufficiently large $x$, with probability greater than $1 - 3 \varepsilon$, the quantity $X(f)$ equals the sum in (\ref{sumfX}) and $X'(\phi)$ equals the sum in (\ref{sumphiX}) for all bounded continuous functions $f: [0, \infty) \rightarrow \R$ and $\phi: [0,1] \rightarrow \R$.
\end{Lemma}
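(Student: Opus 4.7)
The plan is to observe that the event $A$ defined in the paragraph just before the statement, namely that no particle reaches $L(s)$ during the interval $[r,s]$, is defined without reference to $f$ or $\phi$, and that on $A$ every ancestor on $[r,s]$ of every particle at time $s$ stays to the left of $L(s)$, so $S=\{1,\ldots,N(s)\}$. Consequently, on $A$, the indicator ${\bf 1}_{\{i \in S\}}$ equals $1$ for every $i$, so $X(f)$ agrees with the sum in (\ref{sumfX}) and $X'(\phi)$ agrees with the sum in (\ref{sumphiX}) simultaneously for all bounded continuous $f$ and $\phi$ (noting that any particle exactly at $L(s)$ would contribute $0$ to (\ref{sumphiX}) anyway). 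It therefore suffices to show that $\P(A^c) < 3\varepsilon$ for sufficiently large $x$, which is the estimate (\ref{821}) derived just above the statement.

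I would carry this out in three steps. First, Proposition \ref{Yupper}, applied at time $r$, gives $Y(r) \leq C x^{-1}\exp((3\pi^2)^{1/3}(t-r)^{1/3})$ with probability greater than $1-\varepsilon$. Second, since $s-r = Bx^2$ and $t-s \geq \delta t \sim \delta x^3$, the mean value theorem yields $L(r) - L(s) = c((t-r)^{1/3} - (t-s)^{1/3}) = O(Bx^2/(t-s)^{2/3}) = O(1)$, so Lemma \ref{withina} (with any $a$ exceeding this bound) gives $X_1(r) \leq L(s)$ with probability greater than $1-\varepsilon$. On the intersection of these two events, the additive martingale $M$ from (\ref{Mdef}) satisfies
$$M(r) \leq X_1(r) Y(r) \leq C L(s) x^{-1} \exp\bigl((3\pi^2)^{1/3}(t-r)^{1/3}\bigr).$$

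Third, I would apply the Optional Sampling Theorem to $M$ between times $r$ and the first time (if any) after $r$ at which some particle hits $L(s)$. Since on that stopping event the contribution to $M$ from the hitting particle alone is at least $L(s) e^{\sqrt{2} L(s)}$, Markov's inequality conditional on $\mathcal{F}_r$ gives the bound $M(r)/(L(s) e^{\sqrt{2} L(s)})$ for the conditional probability that a particle reaches $L(s)$ during $[r,s]$. Using $\sqrt{2} L(s) = (3\pi^2)^{1/3}(t-s)^{1/3}$, the ratio becomes $C x^{-1} \exp\bigl((3\pi^2)^{1/3}((t-r)^{1/3} - (t-s)^{1/3})\bigr)$, and by the bound from the second step the exponential is a constant, so this contribution is $O(x^{-1})$ and vanishes as $x \to \infty$. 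Assembling the three pieces yields (\ref{821}) and hence $\P(A^c) < 3\varepsilon$.

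There is no genuine obstacle: this lemma is essentially a clean repackaging of the derivation in the two paragraphs preceding it. The only point worth flagging is the uniformity in $f$ and $\phi$, which is immediate because the event $A$ is defined purely in terms of particle trajectories and does not depend on the test functions at all.
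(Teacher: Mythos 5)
Your argument reproduces the paper's own derivation step for step: the event $A = \{X_1(u) \leq L(s)\ \forall u \in [r,s]\}$ depends only on trajectories, forces $S = \{1,\dots,N(s)\}$, and is controlled via exactly the three ingredients the paper uses in the paragraphs preceding the statement, namely Proposition \ref{Yupper} for $Y(r)$, Lemma \ref{withina} for $X_1(r) \leq L(s)$, and Optional Sampling applied to the martingale $M$ of \eqref{Mdef} together with $\sqrt{2}L(s) = (3\pi^2)^{1/3}(t-s)^{1/3}$ to make the exponential bounded. This is correct and is essentially identical to the paper's proof, including the final observation that $A$ is independent of the test functions so the equality holds simultaneously for all $f$ and $\phi$.
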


Because $X(f)$ and $X'(\phi)$ are the sums that would be obtained if particles were killed at $L(s)$ between times $r$ and $s$, we can compute conditional moments of $X(f)$ and $X'(\phi)$ by applying Lemma \ref{fXvar} with $Bx^2$ in place of $s$ and $L(s)$ in place of $L$.  For the rest of this subsection, we define $q_u(x,y)$ as in Lemma \ref{stripdensity} with $L(s)$ in place of $L$.

Define
\begin{equation}\label{hatZdef}
{\hat Z} = \sum_{i=1}^{N(r)} e^{\sqrt{2} X_i(r)} \sin \bigg( \frac{\pi X_i(r)}{L(s)} \bigg) {\bf 1}_{\{X_i(r) \leq L(s)\}}.
\end{equation}
Note that ${\hat Z}$ is defined in the same way as $Z(r)$, except that $L(s)$ is used instead of $L(r)$ in the denominator of the sine function and in the indicator.  Lemma \ref{withina} implies that with probability tending to one as $x \rightarrow \infty$, we have $X_1(r) \leq L(r) - 2(L(r) - L(s))$.  Therefore, there are positive constants $C'$ and $C''$ such that for sufficiently large $x$,
\begin{equation}\label{ZZhat}
\P\big(C' Z(r) \leq {\hat Z} \leq C'' Z(r) \big) > 1 - \varepsilon.
\end{equation}

\begin{Lemma}\label{meanXf}
For sufficiently large $x$, we have $$\bigg| \E[X(f)|{\cal F}_r] - {\hat Z} \frac{\pi}{L(s)^2} e^{-\pi^2 B x^2/2L(s)^2} \int_0^{\infty} f(y) g(y) \: dy \bigg| < \frac{2 \pi \|f\| \varepsilon}{L(s)^2} e^{-\pi^2 B x^2/2L(s)^2} {\hat Z},$$ where $g(y) = 2y e^{-\sqrt{2} y}$ as in Theorem \ref{config1}.
\end{Lemma}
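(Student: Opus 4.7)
The plan is to apply Lemma \ref{fXvar} directly to $X(f)$. Since $X(f)$ is the sum $\sum_{i} f(X_i(s)) \mathbf{1}_{\{i \in S\}}$, and the event $\{i \in S\}$ means the ancestor of particle $i$ stayed below $L(s)$ throughout $[r,s]$, conditional on $\mathcal{F}_r$ the quantity $X(f)$ is exactly what we get by starting from the particles at time $r$ (those with $X_i(r) \leq L(s)$) and running BBM with killing at $0$ and at the constant barrier $L(s)$ for time $Bx^2$. Applying Lemma \ref{fXvar} with $Bx^2$ in place of $s$ and $L(s)$ in place of $L$ therefore gives
\begin{equation*}
\E[X(f)|\mathcal{F}_r] = \sum_{i=1}^{N(r)} \mathbf{1}_{\{X_i(r) \leq L(s)\}} \int_0^{L(s)} f(y) \, q_{Bx^2}(X_i(r), y) \, dy.
\end{equation*}

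Next I would invoke Lemma \ref{stripdensity} to write $q_{Bx^2}(x,y) = p_{Bx^2}(x,y)(1 + D_{Bx^2}(x,y))$. By the choice of $B$ and the fact that $L(s) \leq L(0) = x$, we have $Bx^2/L(s)^2 \geq B$, so $|D_{Bx^2}(x,y)| \leq \varepsilon$ uniformly. Plugging in the explicit form of $p_{Bx^2}$ factors the sum into
\begin{equation*}
\hat{Z} \cdot \frac{2}{L(s)} e^{-\pi^2 B x^2/2L(s)^2} \int_0^{L(s)} f(y) e^{-\sqrt{2} y} \sin\!\bigg(\frac{\pi y}{L(s)}\bigg) dy,
\end{equation*}
by the very definition of $\hat{Z}$ in \eqref{hatZdef}, plus an error controlled by the same expression with $|f|$ in place of $f$ and an extra factor of $\varepsilon$.

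The remaining step is to identify the integral with $\frac{\pi}{2L(s)} \int_0^\infty f(y) g(y) dy$. Writing $\sin(\pi y/L(s)) = \pi y/L(s) + R(y)$ with $|R(y)| \leq (\pi y/L(s))^3/6$, and extending the integral from $[0,L(s)]$ to $[0,\infty)$ (the tail $\int_{L(s)}^\infty y e^{-\sqrt{2} y} dy$ decays exponentially), one obtains
\begin{equation*}
\int_0^{L(s)} f(y) e^{-\sqrt{2} y} \sin\!\bigg(\frac{\pi y}{L(s)}\bigg) dy = \frac{\pi}{2 L(s)} \int_0^\infty f(y) g(y) \, dy + O\!\bigg(\frac{\|f\|}{L(s)^3}\bigg),
\end{equation*}
since $g(y) = 2 y e^{-\sqrt{2} y}$. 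Multiplying by $\frac{2}{L(s)} e^{-\pi^2 B x^2/2L(s)^2} \hat{Z}$ yields the stated main term exactly, with the approximation error of order $\hat{Z} \|f\| L(s)^{-4} e^{-\pi^2 B x^2/2L(s)^2}$.

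For the error from the $D_{Bx^2}$ term, a crude bound using $|\sin(\pi y/L(s))| \leq \pi y/L(s)$ and $\int_0^\infty y e^{-\sqrt{2} y} dy = 1/2$ gives
\begin{equation*}
\sum_i \mathbf{1}_{\{X_i(r) \leq L(s)\}} \int_0^{L(s)} p_{Bx^2}(X_i(r),y) \, dy \leq \hat{Z} \cdot \frac{\pi}{L(s)^2} e^{-\pi^2 B x^2/2L(s)^2},
\end{equation*}
so this contribution is at most $\varepsilon \|f\| \pi \hat{Z}/L(s)^2 \cdot e^{-\pi^2 B x^2/2L(s)^2}$. Since $L(s) \to \infty$ as $x \to \infty$ (because $s \leq (1-\delta) t$), the $L(s)^{-3}$ approximation error is eventually dominated by an arbitrarily small multiple of the $L(s)^{-2}$ main error, and summing the two bounds yields the final constant $2 \pi$ in the statement for all sufficiently large $x$. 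The main obstacle is really just bookkeeping: ensuring that the $\pi$ versus $2\pi$ constant, and the decay rate of the Taylor remainder, combine correctly with the parameter $\varepsilon$ fixed at the beginning of Section \ref{momfunsec}; no further probabilistic input is required beyond what has already been assembled.
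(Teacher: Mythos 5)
Your proposal is correct and follows essentially the same route as the paper: apply Lemma \ref{fXvar} conditionally on $\mathcal{F}_r$, substitute Lemma \ref{stripdensity}, replace $\sin(\pi y/L(s))$ by its linear approximation and extend the integral to $[0,\infty)$, and then bound the two error contributions. The only cosmetic differences are that you make the Taylor remainder $|R(y)| \leq (\pi y/L(s))^3/6$ and the tail estimate explicit (the paper simply packages them into a quantity $\gamma(x) \to 0$), and that you bound the $D$-error directly via $|D_{Bx^2}(\cdot,\cdot)| \leq \varepsilon$ rather than factoring out a multiplicative $(1+D)$.
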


\begin{proof}
Because the right-hand side of (\ref{Dineq}) is at most $\varepsilon$ when $s = Bx^2$, it follows from Lemma \ref{fXvar} and Lemma \ref{stripdensity} that
\begin{align}
\E[X(f)|{\cal F}_r] &= \sum_{i=1}^{N(r)} \int_0^{L(s)} f(y) q_{Bx^2}(X_i(r), y) \: dy \nonumber \\
&= \frac{2 (1 + D)}{L(s)} e^{-\pi^2 B x^2/2L(s)^2} {\hat Z} \int_0^{L(s)} f(y) e^{-\sqrt{2} y} \sin \bigg( \frac{\pi y}{L(s)} \bigg) \: dy, \nonumber
\end{align}
where $|D| < \varepsilon$.  Note that
$$\lim_{x \rightarrow \infty} L(s) \int_0^{L(s)} f(y) e^{-\sqrt{2} y} \bigg| \frac{\pi y}{L(s)} - \sin \bigg( \frac{\pi y}{L(s)} \bigg) \bigg| \: dy = 0$$ and
$$\lim_{x \rightarrow \infty} \int_{L(s)}^{\infty} f(y) e^{-\sqrt{2} y} \cdot \pi y \: dy = 0.$$ It follows that
$$L(s) \int_0^{L(s)} f(y) e^{-\sqrt{2} y} \sin \bigg( \frac{\pi y}{L(s)} \bigg) \: dy = \int_0^{\infty} f(y) e^{-\sqrt{2} y} \cdot \pi y \: dy + \gamma(x),$$ where $\gamma(x) \rightarrow 0$ as $x \rightarrow \infty$.  Therefore,
\begin{equation}\label{Xfapprox}
\E[X(f)|{\cal F}_r] = {\hat Z} \frac{\pi(1 + D)}{L(s)^2} e^{-\pi^2 B x^2/2L(s)^2} \bigg( \int_0^{\infty} f(y) g(y) \: dy + \frac{2 \gamma(x)}{\pi} \bigg).
\end{equation}
To obtain the result from (\ref{Xfapprox}), first note that the error term involving $\gamma(x)$ is bounded by $2 (1 + \varepsilon) L(s)^{-2} e^{-\pi^2 B x^2/2L(s)^2}{\hat Z} \gamma(x)$, and then bound the remaining error term involving $D$ by $\pi \varepsilon L(s)^{-2} e^{-\pi^2 B x^2/2L(s)^2} \|f\| {\hat Z}$.
\end{proof}

\begin{Lemma}\label{varXf}
There is a constant $C$ such that for sufficiently large $x$,
$$\textup{Var}(X(f)|{\cal F}_r) \leq \frac{C Y(r) e^{\sqrt{2} L(s)}}{x^{11/2}}.$$
\end{Lemma}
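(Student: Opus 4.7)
The plan is to apply Lemma~\ref{fXvar} to each particle alive at time $r$ and use the independence of descendants in the process with killing at $L(s)$ on $[r,s]$. This yields
$$\textup{Var}(X(f)|{\cal F}_r) \leq \sum_{i:\, X_i(r) \leq L(s)} \E[X_i(f)^2 | {\cal F}_r] = \sum_i \bigl(T_i^{(1)} + T_i^{(2)}\bigr),$$
where $X_i(f)$ is the contribution to $X(f)$ from the descendants of the $i$-th particle at time $r$, and $T_i^{(1)}, T_i^{(2)}$ are the two terms of Lemma~\ref{fXvar} applied with initial time $r$, elapsed time $Bx^2$, initial position $X_i(r)$, and boundary $L = L(s)$. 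Throughout, $q_u(\cdot, \cdot)$ denotes the density of this killed branching Brownian motion.

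For the diagonal term, $B$ has been chosen so that $Bx^2/L(s)^2 \geq b$ for some constant $b > 0$ in the relevant range of $s$, so (\ref{q1}) applies to $q_{Bx^2}$. Combined with the easy estimate $\int_0^{L(s)} e^{-\sqrt{2}y} \sin(\pi y/L(s))\,dy = O(1/L(s))$ and the bound $|f|^2 \leq \|f\|^2$, this gives
$$T_i^{(1)} \leq \frac{C\|f\|^2 e^{\sqrt{2}X_i(r)}\sin(\pi X_i(r)/L(s))}{L(s)^2},$$
so that $\sum_i T_i^{(1)} \leq C\|f\|^2 \hat Z/L(s)^2 \leq C\|f\|^2 Y(r)/x^2$ using $\hat Z \leq Y(r)$ and $L(s) \geq C_\delta x$. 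Since $L(s) \geq C_\delta x$ implies $e^{\sqrt{2}L(s)} \geq x^{7/2}$ for large $x$, this diagonal contribution is absorbed into the target bound.

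For the branching term, substitute $m = Bx^2 - u$ and bound $I(z, m) := \int_0^{L(s)} f(y) q_m(z, y)\,dy \leq \|f\| \int_0^{L(s)} q_m(z, y)\,dy$. On $q_u(X_i(r), z)$ apply (\ref{q1}), valid since $u = Bx^2 - m$ remains of order $L(s)^2$ or larger throughout the dominant part of the range. The key scalar computation is
$$\int_0^{L(s)} e^{\sqrt{2}z} \sin^3\bigg(\frac{\pi z}{L(s)}\bigg)\,dz \leq \frac{Ce^{\sqrt{2}L(s)}}{L(s)^3},$$
concentrated near $z = L(s)$. In the main regime $m \geq bL(s)^2$, (\ref{q1}) also applies to $q_m$, giving $\int q_m(z,y)\,dy \leq Ce^{\sqrt{2}z}\sin(\pi z/L(s)) e^{-\pi^2 m/2L(s)^2}/L(s)^2$. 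Substituting, integrating out $z$ via the display above, and integrating the remaining exponential in $u$ (producing $CL(s)^2 e^{-\pi^2 Bx^2/2L(s)^2}$) leads to
$$T_i^{(2)} \leq \frac{C\|f\|^2 e^{\sqrt{2}X_i(r)}\sin(\pi X_i(r)/L(s)) \cdot e^{\sqrt{2}L(s)}}{L(s)^6}.$$
Summing with $\hat Z \leq Y(r)$ and using $L(s)^6 \geq Cx^6 \geq Cx^{11/2}$ yields the claim. The main obstacle is the sub-regime where the approximation $q_m \leq Cp_m$ breaks down: for $m \in [1, bL(s)^2]$ one uses (\ref{q2}), and for $m \in [0, 1]$ the crude bound $\int q_m(z,y)\,dy \leq e^m \leq e$ from (\ref{q4}). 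In both cases the $u$-dependent factor $e^{-\pi^2 u/2L(s)^2}$ supplied by the (\ref{q1}) bound on $q_u$ provides enough decay that the resulting contributions are dominated by $C\|f\|^2 Y(r)/L(s)^2$, and are therefore absorbed into the same bound as the diagonal.
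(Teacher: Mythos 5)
The overall plan matches the paper's (decompose the branching integral in the elapsed time $m = Bx^2 - u$, use the pointwise density bounds of Lemma~\ref{mainqslem} in the different regimes, and sum over particles at time $r$). However, your decomposition is too coarse and two of the pieces do not close.

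\textbf{Small $u$ is not handled.} Your ``main regime'' is $m \geq bL(s)^2$, i.e.\ $u \leq Bx^2 - bL(s)^2$. In that range $u$ can be close to $0$, so (\ref{q1}) does \emph{not} apply to $q_u(X_i(r),z)$, yet your calculation uses the bound $q_u \leq Cp_u$ (and in particular the factor $e^{-\pi^2 u/2L(s)^2}$) throughout. The paper sidesteps the need for a pointwise bound on $q_u$ in this regime by first integrating $q_u$ in $u$ via the Green's function estimate (\ref{q5}), which is available for all $u$; your argument needs the analogue of that step, and as written it is missing.

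\textbf{The intermediate regime $m \in [1, bL(s)^2]$ gives a bound that is too large.} With (\ref{q2}) one has $q_m(z,y) \leq CL(s)^3 m^{-3/2} p_m(z,y)$, hence $\int_0^{L(s)} q_m(z,y)\,dy \leq CL(s)m^{-3/2}e^{\sqrt 2 z}\sin(\pi z/L(s))$, and after squaring, multiplying by the (\ref{q1}) bound for $q_u$, and integrating in $z$ via $\int_0^{L(s)} e^{\sqrt 2 z}\sin^3(\pi z/L(s))\,dz \leq Ce^{\sqrt 2 L(s)}/L(s)^3$, the contribution for a particle at $w$ is
$\int_1^{bL(s)^2} \frac{C}{L(s)^2 m^3}\,dm \cdot e^{\sqrt 2 L(s)} e^{\sqrt 2 w}\sin(\pi w/L(s)) \asymp \frac{Ce^{\sqrt 2 w} e^{\sqrt 2 L(s)}}{L(s)^2}\sin\Big(\frac{\pi w}{L(s)}\Big)$.
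This carries a factor $e^{\sqrt 2 L(s)}$, so it is \emph{not} ``dominated by $C\|f\|^2 Y(r)/L(s)^2$'' and cannot be absorbed the way the diagonal was; in fact it exceeds the target $CY(r)e^{\sqrt 2 L(s)}/x^{11/2}$ by a factor of order $L(s)^{7/2}$. The $u$-decay $e^{-\pi^2 u/2L(s)^2}$ is only of size $e^{-\pi^2 B/2}$ here (a constant), so it cannot rescue this. The paper gets the extra $L(s)^{-7/2}$ by splitting again at $m = L(s)^{7/4}$: for $m \in [L(s)^{7/4}, CL(s)^2]$ the integral $\int m^{-3}\,dm$ is $O(L(s)^{-7/2})$ (piece (\ref{vt2})), and for $m \in [1,L(s)^{7/4}]$ it switches from (\ref{q2}) to the Gaussian bound (\ref{q3}), further splitting in $z$ at $2L(s)/3$ (pieces (\ref{vt3}) and (\ref{vt4})): for $z \leq 2L(s)/3$ the $e^{\sqrt 2 z}$ factor is only $e^{2\sqrt 2 L(s)/3}$, and for $z > 2L(s)/3$ the Gaussian $e^{-(z-y)^2/2m}$ provides super-polynomial decay. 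Without this finer dissection the intermediate regime does not close.

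Your treatment of the diagonal term and of the $m \in [0,1]$ sliver (via (\ref{q4})) agrees with the paper's and is correct.
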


\begin{proof}
By summing over the contributions of the particles at time $r$ and applying Lemma \ref{fXvar}, we get
\begin{align}\label{condvarX}
\mbox{Var}(X(f)|{\cal F}_r) &\leq \sum_{i=1}^{N(r)} \int_0^{L(s)} f(y)^2 \: q_{Bx^2}(X_i(r),y) \: dy \nonumber \\
&\hspace{.3in}+ 2 \sum_{i=1}^{N(r)} \int_0^{Bx^2} \int_0^{L(s)} q_u(X_i(r), z) \bigg( \int_0^{L(s)} f(y) q_{Bx^2-u}(z,y) \: dy \bigg)^2 \: dz \: du.
\end{align}
The first term on the right-hand side of (\ref{condvarX}) is bounded by $\|f\|^2 \E[X(1)|{\cal F}_r]$, where $X(1)$ denotes the value of $X(f)$ when $f(x) = 1$ for all $x$.  Consequently, by Lemma \ref{meanXf}, this term is bounded above by $C {\hat Z} x^{-2} \leq C Y(r) x^{-2} \leq C Y(r) e^{\sqrt{2} L(s)}/x^{11/2}$.

It remains to bound the second term.  The strategy is very similar to that used in the proof of Proposition 14 in \cite{bbs} and involves splitting the outer integral into four pieces.  Suppose $0 < w < L(s)$.  Using Lemma \ref{stripdensity} and equations (\ref{q1}) and (\ref{q5}),
\begin{align}\label{vt1}
&\int_0^{Bx^2/2} \int_0^{L(s)} q_u(w,z) \bigg( \int_0^{L(s)} f(y) q_{Bx^2-u}(z,y) \: dy \bigg)^2 \: dz \: du \nonumber \\
&\qquad \leq \int_0^{Bx^2/2} \int_0^{L(s)} q_u(w,z) \bigg( \int_0^{L(s)} \frac{C}{L(s)} e^{\sqrt{2} z} \sin \bigg( \frac{\pi z}{L(s)} \bigg) e^{-\sqrt{2} y} \sin \bigg( \frac{\pi y}{L(s)} \bigg) \: dy \bigg)^2 \: dz \: du \nonumber \\
&\qquad \leq \frac{C}{L(s)^2} \int_0^{Bx^2/2} \int_0^{L(s)} q_u(w,z) e^{2 \sqrt{2} z} \sin \bigg( \frac{\pi z}{L(s)} \bigg)^2 \bigg( \int_0^{L(s)} e^{-\sqrt{2} y} \sin \bigg( \frac{\pi y}{L(s)} \bigg) \: dy \bigg)^2 \: dz \: du \nonumber \\
&\qquad \leq \frac{C}{L(s)^4} \int_0^{L(s)} e^{2 \sqrt{2} z} \sin \bigg( \frac{\pi z}{L(s)} \bigg)^2 \bigg( \int_0^{Bx^2/2} q_u(w,z) \: du \bigg) \: dz. \nonumber \\
&\qquad \leq \frac{C e^{\sqrt{2} w}}{L(s)^4} \int_0^{L(s)} e^{\sqrt{2} z} \sin \bigg( \frac{\pi z}{L(s)} \bigg)^2 \frac{w(L(s) - z)}{L(s)} \: dz \nonumber \\
&\qquad \leq \frac{C e^{\sqrt{2} w} e^{\sqrt{2} L(s)}}{L(s)^6}.
\end{align}
Using Lemma \ref{stripdensity} and (\ref{q2}),
\begin{align}\label{vt2}
&\int_{Bx^2/2}^{Bx^2 - L(s)^{7/4}} \int_0^{L(s)} q_u(w,z) \bigg( \int_0^{L(s)} f(y) q_{Bx^2-u}(z,y) \: dy \bigg)^2 \: dz \: du \nonumber \\
&\qquad \leq \int_{Bx^2/2}^{Bx^2 - L(s)^{7/4}} \int_0^{L(s)} \frac{C}{L(s)} e^{\sqrt{2} w} \sin \bigg( \frac{\pi w}{L(s)} \bigg) e^{-\sqrt{2} z} \sin \bigg(\frac{\pi z}{L(s)} \bigg) \nonumber \\
&\qquad \qquad \times \bigg( \int_0^{L(s)} \frac{C}{L(s)} e^{\sqrt{2} z} \sin \bigg( \frac{\pi z}{L(s)} \bigg) e^{-\sqrt{2} y} \sin \bigg( \frac{\pi y}{L(s)} \bigg) \cdot \frac{CL(s)^3}{(Bx^2 - u)^{3/2}} \: dy \bigg)^2  \: dz \: du \nonumber \\
&\qquad \leq CL(s)^3 e^{\sqrt{2} w} \sin \bigg( \frac{\pi w}{L(s)} \bigg) \bigg( \int_{Bx^2/2}^{Bx^2 - L(s)^{7/4}} \frac{1}{(Bx^2 - u)^3} \: du \bigg) \nonumber \\
&\qquad \qquad \times \bigg( \int_0^{L(s)} e^{\sqrt{2} z} \sin \bigg( \frac{\pi z}{L(s)} \bigg)^3 \: dz \bigg) \bigg( \int_0^{L(s)} e^{-\sqrt{2} y} \sin \bigg( \frac{\pi y}{L(s)} \bigg) \: dy \bigg)^2 \nonumber \\
&\qquad \leq C L(s)^3 e^{\sqrt{2} w} \sin \bigg( \frac{\pi w}{L(s)} \bigg) \cdot \frac{1}{L(s)^{7/2}} \cdot \frac{e^{\sqrt{2} L(s)}}{L(s)^3} \cdot \frac{1}{L(s)^2} \nonumber \\
&\qquad = \frac{C e^{\sqrt{2}w} e^{\sqrt{2} L(s)}}{L(s)^{11/2}} \sin \bigg( \frac{\pi w}{L(s)} \bigg).
\end{align}
Using (\ref{q3}), we get
\begin{align}\label{vt3}
&\int_{Bx^2 - L(s)^{7/4}}^{Bx^2 - 1} \int_0^{2L(s)/3} q_u(w,z) \bigg( \int_0^{L(s)} f(y) q_{Bx^2-u}(z,y) \: dy \bigg)^2 \: dz \: du \nonumber \\
&\qquad \leq \int_{Bx^2 - L(s)^{7/4}}^{Bx^2 - 1} \int_0^{2L(s)/3} \frac{C}{L(s)} e^{\sqrt{2} w} \sin \bigg( \frac{\pi w}{L(s)} \bigg) \nonumber \\
&\qquad \qquad \times e^{-\sqrt{2} z} \sin \bigg( \frac{\pi z}{L(s)} \bigg) \bigg( \int_0^{L(s)} \frac{C e^{\sqrt{2}(z-y)}}{(Bx^2 - u)^{1/2}} \: dy \bigg)^2 \: dz \: du \nonumber \\
&\qquad \leq \frac{C}{L(s)} e^{\sqrt{2} w} \sin \bigg( \frac{\pi w}{L(s)} \bigg) \bigg( \int_{Bx^2 - L(s)^{7/4}}^{Bx^2 - 1} \frac{1}{Bx^2 - u} \: du \bigg) \bigg( \int_0^{2L(s)/3} e^{\sqrt{2} z} \sin \bigg( \frac{\pi z}{L(s)} \bigg) \: dz \bigg) \nonumber \\
&\qquad \leq \frac{C e^{\sqrt{2} w} e^{2 \sqrt{2} L(s) / 3} \log L(s)}{L(s)} \sin \bigg( \frac{\pi w}{L(s)} \bigg)
\end{align}
and
\begin{align}\label{vt4}
&\int_{Bx^2 - L(s)^{7/4}}^{Bx^2 - 1} \int_{2L(s)/3}^{L(s)} q_u(w,z) \bigg( \int_0^{L(s)} f(y) q_{Bx^2-u}(z,y) \: dy \bigg)^2 \: dz \: du \nonumber \\
&\qquad \leq \int_{Bx^2 - L(s)^{7/4}}^{Bx^2 - 1} \int_{2L(s)/3}^{L(s)} \frac{C}{L(s)} e^{\sqrt{2} w} \sin \bigg( \frac{\pi w}{L(s)} \bigg) \nonumber \\
&\qquad \qquad \times e^{-\sqrt{2} z} \sin \bigg( \frac{\pi z}{L(s)} \bigg) \bigg( \int_0^{L(s)} \frac{C e^{\sqrt{2}(z-y)} e^{-(z-y)^2/2(Bx^2 - u)}}{(Bx^2 - u)^{1/2}} \: dy \bigg)^2 \: dz \: du \nonumber \\
&\qquad \leq \frac{C}{L(s)} e^{\sqrt{2} w} \sin \bigg( \frac{\pi w}{L(s)} \bigg) \bigg( \int_{Bx^2 - L(s)^{7/4}}^{Bx^2 - 1} \frac{1}{Bx^2 - u} \: du \bigg) \nonumber \\
&\qquad \qquad \times \int_{2L(s)/3}^{L(s)} e^{\sqrt{2} z} \sin \bigg( \frac{\pi z}{L(s)} \bigg) \bigg( \int_0^{L(s)} e^{-\sqrt{2} y} e^{-(z-y)^2/2L(s)^{7/4}} \: dy \bigg)^2 \: dz \nonumber \\
&\qquad \leq \frac{C \log L(s)}{L(s)} e^{\sqrt{2} w} \sin \bigg( \frac{\pi w}{L(s)} \bigg) \nonumber \\
&\qquad \qquad \times \int_{2L(s)/3}^{L(s)} e^{\sqrt{2} z} \bigg( \int_0^{L(s)/3} e^{-\sqrt{2} y} e^{-(L(s)/3)^2/2L(s)^{7/4}} \: dy + \int_{L(s)/3}^{L(s)} e^{-\sqrt{2} y} \: dy \bigg)^2 \: dz \nonumber \\
&\qquad \leq \frac{C \log L(s)}{L(s)} e^{\sqrt{2} w} \sin \bigg( \frac{\pi w}{L(s)} \bigg) \bigg( \int_{2L(s)/3}^{L(s)} e^{\sqrt{2} z} \: dz \bigg) \bigg( e^{-L(s)^{1/4}/18} + e^{-\sqrt{2}L(s)/3} \bigg)^2 \nonumber \\
&\qquad \leq \frac{C \log L(s)}{L(s)} e^{\sqrt{2} w} e^{\sqrt{2} L(s)} e^{-L(s)^{1/4}/9} \sin \bigg( \frac{\pi w}{L(s)} \bigg).
\end{align}
Finally, using (\ref{q4}),
\begin{align}\label{vt5}
&\int_{Bx^2 - 1}^{Bx^2} \int_0^{L(s)} q_u(w,z) \bigg( \int_0^{L(s)} f(y) q_{Bx^2-u}(z,y) \: dy \bigg)^2 \: dz \: du \nonumber \\
&\qquad \leq \int_{Bx^2 - 1}^{Bx^2} \int_0^{L(s)} \frac{C}{L(s)} e^{\sqrt{2} w} \sin \bigg( \frac{\pi w}{L(s)} \bigg) e^{-\sqrt{2} z} \sin \bigg( \frac{\pi z}{L(s)} \bigg) \big( \|f\| e \big)^2 \: dz \: du \nonumber \\
&\qquad \leq \frac{C e^{\sqrt{2} w}}{L(s)^2} \sin \bigg( \frac{\pi w}{L(s)} \bigg).
\end{align}
The expressions in (\ref{vt1}), (\ref{vt2}), (\ref{vt3}), (\ref{vt4}), and (\ref{vt5}) are all bounded by $C e^{\sqrt{2} w} e^{\sqrt{2} L(s)}/L(s)^{11/2}$.  Because $L(s)$ and $x$ are the same to within a constant factor, we get after summing over the positions of the particles at time $r$ that the second term on the right-hand side of (\ref{condvarX}) is bounded by $C Y(r) e^{\sqrt{2} L(s)}/x^{11/2}$.  The result follows.
\end{proof}

\begin{Lemma}\label{meanXphi}
For sufficiently large $x$, we have
$$\bigg| \E[X'(\phi)|{\cal F}_r] - \frac{4{\hat Z}}{\pi} e^{-\pi^2 B x^2/2L(s)^2} \int_0^1 \phi(y) h(y) \: dy \bigg| < \frac{4 \|\phi\| \varepsilon}{\pi} e^{-\pi^2 B x^2/2L(s)^2} {\hat Z},$$
where $h(y) = \frac{\pi}{2} \sin(\pi y)$ as in Theorem \ref{config2}.
\end{Lemma}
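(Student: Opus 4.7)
The plan is to mirror the proof of Lemma \ref{meanXf} essentially verbatim, but with the key observation that the weighting $e^{\sqrt{2} X_i(s)}$ inside $X'(\phi)$ will cancel the exponential decay $e^{-\sqrt{2} y}$ appearing in the approximate density $p_{Bx^2}$, which removes the need for the two limiting manipulations (the $\sin \approx$ linear near $0$ and the truncation of the integral beyond $L(s)$) that were required in Lemma \ref{meanXf}.

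First I would apply Lemma \ref{fXvar} with $Bx^2$ in place of $s$ and $L(s)$ in place of $L$, summing over the particles present at time $r$, to write
\begin{equation*}
\E[X'(\phi)|\mathcal{F}_r] = \sum_{i=1}^{N(r)} \int_0^{L(s)} e^{\sqrt{2} y} \phi\!\left(\frac{y}{L(s)}\right) q_{Bx^2}(X_i(r), y)\, dy.
\end{equation*}
Since $B$ was chosen so that the right-hand side of \eqref{Dineq} is at most $\varepsilon$ when $s = BL^2$, and since $Bx^2 \geq BL(s)^2$, Lemma \ref{stripdensity} gives $q_{Bx^2}(X_i(r), y) = p_{Bx^2}(X_i(r), y)(1 + D)$ with $|D| < \varepsilon$ uniformly in $i$ and $y$. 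Plugging in the explicit form of $p_{Bx^2}$, the factor $e^{\sqrt{2} y}$ from $X'(\phi)$ cancels against the $e^{-\sqrt{2} y}$ in $p_{Bx^2}$, and the sum over $i$ assembles into $\hat Z$ as defined in \eqref{hatZdef}.

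What remains is
\begin{equation*}
\E[X'(\phi)|\mathcal{F}_r] = \frac{2(1+D)}{L(s)}\, e^{-\pi^2 Bx^2/2L(s)^2}\, \hat Z \int_0^{L(s)} \phi\!\left(\frac{y}{L(s)}\right) \sin\!\left(\frac{\pi y}{L(s)}\right) dy.
\end{equation*}
Substituting $u = y/L(s)$ turns the integral into $L(s)\int_0^1 \phi(u)\sin(\pi u)\, du = 2 L(s)/\pi \cdot \int_0^1 \phi(y) h(y)\, dy$, so the $L(s)$ factors cancel and we obtain
\begin{equation*}
\E[X'(\phi)|\mathcal{F}_r] = \frac{4(1+D)}{\pi}\, e^{-\pi^2 Bx^2/2L(s)^2}\, \hat Z \int_0^1 \phi(y) h(y)\, dy.
\end{equation*}
Finally, since $h$ is a probability density on $[0,1]$, we have $|\int_0^1 \phi(y) h(y)\, dy| \leq \|\phi\|$, so the $D$-error term is bounded by $(4\|\phi\|\varepsilon/\pi)\,e^{-\pi^2 Bx^2/2L(s)^2}\hat Z$, which is exactly the bound claimed.

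There is no real obstacle here: unlike in Lemma \ref{meanXf}, we do not need to replace $\sin(\pi y/L(s))$ by $\pi y/L(s)$ near the origin nor to extend the integral to infinity, because here $\phi$ is already evaluated at the rescaled argument $y/L(s) \in [0,1]$, so the substitution $u = y/L(s)$ produces a fixed integral on $[0,1]$ with no residual $\gamma(x)$ correction. The only estimate to be careful about is ensuring $Bx^2$ is large enough relative to $L(s)^2$ to invoke Lemma \ref{stripdensity} with the prescribed error $\varepsilon$, which follows from $L(s) \leq L(0) = x$ and the choice of $B$ made at the start of Section \ref{momfunsec}.
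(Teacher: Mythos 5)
Your proof is correct and follows the paper's own argument step for step: apply Lemma \ref{fXvar} with $Bx^2$ and $L(s)$, invoke Lemma \ref{stripdensity} so that $q_{Bx^2} = p_{Bx^2}(1+D)$ with $|D|<\varepsilon$, cancel $e^{\sqrt{2}y}$ against $e^{-\sqrt{2}y}$, assemble $\hat Z$, substitute $u=y/L(s)$, and bound the $D$ error using that $h$ is a probability density. Your added remark about why the $\gamma(x)$-type corrections from Lemma \ref{meanXf} are unnecessary here (the rescaling $u=y/L(s)$ produces a fixed integral on $[0,1]$) is accurate but is just explanatory commentary on the same computation, not a different route.
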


\begin{proof}
Because the right-hand side of (\ref{Dineq}) is at most $\varepsilon$ when $s = Bx^2$, it follows from Lemma \ref{fXvar} and Lemma \ref{stripdensity} that
\begin{align}
\E[X'(\phi)|{\cal F}_r] &= \sum_{i=1}^{N(r)} \int_0^{L(s)} e^{\sqrt{2} y} \phi \bigg( \frac{y}{L(s)} \bigg) q_{Bx^2}(X_i(r), y) \: dy \nonumber \\
&= \frac{2(1 + D)}{L(s)} e^{-\pi^2 Bx^2/2L(s)^2} {\hat Z} \int_0^{L(s)} \phi \bigg( \frac{y}{L(s)} \bigg) \sin \bigg( \frac{\pi y}{L(s)} \bigg) \: dy \nonumber \\
&= \frac{4(1 + D)}{\pi} e^{-\pi^2 Bx^2/2L(s)^2} {\hat Z} \int_0^1 \phi(y) h(y) \: dy \nonumber \
\end{align}
where $|D| < \varepsilon$.  Because $h$ is a probability density, the error term involving $D$ is bounded by $(4\|\phi\| \varepsilon/\pi) e^{-\pi^2 B x^2/2L(s)^2} {\hat Z}$, as claimed.
\end{proof}

\begin{Lemma}\label{varXphi}
There is a constant $C$ such that for sufficiently large $x$,
$$\textup{Var}(X'(\phi)|{\cal F}_r) \leq \frac{C Y(r) e^{\sqrt{2} L(s)} \log x}{x^2}.$$
\end{Lemma}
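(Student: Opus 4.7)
My plan is to follow the same template as the proof of Lemma \ref{varXf}, but applied to $\tilde{f}(y) = e^{\sqrt{2} y} \phi(y/L(s))$. The main point is that $\phi$ is bounded but $\tilde{f}$ grows exponentially, and this exponential growth will, in each sub-calculation, be exactly cancelled by the $e^{-\sqrt{2} y}$ factor that appears in the asymptotic density $p_{Bx^2-u}(z,y)$ from Lemma \ref{stripdensity} or in the Gaussian bound (\ref{q3}); the integrals involved then produce factors of $L(s)$ (from $\int_0^{L(s)} \phi(y/L(s)) \sin(\pi y/L(s))\,dy$) rather than $1/L(s)$ (which is what $\int f(y) e^{-\sqrt{2} y} \sin(\pi y/L(s))\,dy$ produced in Lemma \ref{varXf}).

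Concretely, I would apply Lemma \ref{fXvar} with $L(s)$ in place of $L$ and $\tilde{f}$ in place of $f$ to each particle alive at time $r$, and sum. The first-moment term $\sum_{i}\int_0^{L(s)} \tilde{f}(y)^2 q_{Bx^2}(X_i(r),y)\,dy$ is handled by bounding $\tilde f(y)^2 \le \|\phi\|^2 e^{2\sqrt{2} y}$, applying (\ref{q1}) (valid since $Bx^2 \geq bL(s)^2$), and computing $\int_0^{L(s)} e^{\sqrt{2} y} \sin(\pi y/L(s))\,dy \asymp e^{\sqrt{2} L(s)}/L(s)$ by Laplace's method near $y=L(s)$. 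This yields a bound of order $\hat Z\, e^{\sqrt{2} L(s)}/L(s)^2 \le C Y(r) e^{\sqrt{2} L(s)}/x^2$, which is well within the target.

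For the double-integral term, I would split the outer $du$ integral into the same four pieces used in Lemma \ref{varXf}: $[0, Bx^2/2]$, $[Bx^2/2, Bx^2 - L(s)^{7/4}]$, $[Bx^2 - L(s)^{7/4}, Bx^2 - 1]$ (subdivided into $z \in [0, 2L(s)/3]$ and $z \in [2L(s)/3, L(s)]$), and $[Bx^2 - 1, Bx^2]$. In each piece one bounds the inner integral $\int_0^{L(s)} \tilde f(y) q_{Bx^2-u}(z,y)\,dy$ using Lemma \ref{stripdensity} together with (\ref{q1})--(\ref{q4}) as appropriate, giving a prefactor of the form $e^{\sqrt{2} z}$ times something controlled, and then bounds $q_u(w,z)$ by (\ref{q1}) or (\ref{q2}). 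Carrying out the $z$-integrals, the exponentials $e^{2\sqrt{2} z}$ (coming from the square of the $e^{\sqrt{2} z}$ prefactor) combine with the $e^{-\sqrt{2} z}$ from Lemma \ref{stripdensity} for $q_u(w,z)$ to leave $\int_0^{L(s)} e^{\sqrt{2} z} \sin(\pi z/L(s))\cdot(\text{sin-power})\,dz \asymp e^{\sqrt{2} L(s)}/L(s)^k$ for a small power $k$, which is the dominant $e^{\sqrt{2} L(s)}$ in the final bound.

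The $\log x$ factor appears in exactly one place, namely the region $u \in [Bx^2 - L(s)^{7/4}, Bx^2 - 1]$ where the inner transition density is estimated by (\ref{q3}); the outer $du$ integration then produces $\int (Bx^2-u)^{-1}\,du \asymp \log L(s)^{7/4} \asymp \log x$, just as it did in (\ref{vt3})--(\ref{vt4}). Every other piece gives the cleaner bound of order $e^{\sqrt{2} w} e^{\sqrt{2} L(s)}/L(s)^2$ up to constants. Summing all five pieces, bounding $\sin(\pi w/L(s)) \le 1$, and then summing over the particles at time $r$ converts the $e^{\sqrt{2} w}$ into $Y(r)$ and yields the stated bound $C Y(r) e^{\sqrt{2} L(s)} \log x / x^2$. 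The main obstacle I anticipate is purely bookkeeping: one must verify in each of the four (really five) sub-regions that the polynomial prefactors accumulated from the inner and outer integrations are all dominated by $e^{\sqrt{2} L(s)}/L(s)^2 \asymp e^{\sqrt{2} L(s)}/x^2$, and in particular check that no piece other than the $(\ref{q3})$-piece generates an additional logarithm.
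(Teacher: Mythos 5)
Your overall framework---apply Lemma \ref{fXvar} with $\tilde f(y)=e^{\sqrt{2}y}\phi(y/L(s))$ and $L=L(s)$, sum over particles at time $r$---matches the paper, and your treatment of the first-moment term is correct. The problem is the double-integral term: the five-region decomposition borrowed from Lemma \ref{varXf} does \emph{not} yield the claimed bound, and your account of where the logarithm comes from is not right.

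Take the region $u\in[Bx^2/2,\,Bx^2-L(s)^{7/4}]$, where you would apply (\ref{q2}) to $q_{Bx^2-u}(z,y)$. In Lemma \ref{varXf} the inner integral contributes $\int_0^{L(s)} f(y)e^{-\sqrt{2}y}\sin(\pi y/L(s))\,dy\asymp 1/L(s)$; with $\tilde f$ the exponentials cancel and you are left with $\int_0^{L(s)}\phi(y/L(s))\sin(\pi y/L(s))\,dy\asymp L(s)$, an extra factor of $L(s)^2$, hence $L(s)^4$ after squaring. Carrying this through the $z$- and $u$-integrations, this region alone contributes order $e^{\sqrt{2}w}e^{\sqrt{2}L(s)}/L(s)^{3/2}$, which overshoots the target $e^{\sqrt{2}w}e^{\sqrt{2}L(s)}\log x/L(s)^2$ by a factor $\asymp L(s)^{1/2}/\log x$. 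Moving the cutoff from $L(s)^{7/4}$ to $L(s)^2$ repairs this region but ruins the near-endpoint one: for $\tilde f$ the bound (\ref{q3}) gives $\int_0^{L(s)}e^{\sqrt{2}y}\,q_{Bx^2-u}(z,y)\,dy\le C\|\phi\|e^{\sqrt{2}z}$, since the Gaussian integral over $y$ is $\asymp (Bx^2-u)^{1/2}$ and cancels the $(Bx^2-u)^{-1/2}$ prefactor, so there is no $(Bx^2-u)^{-1}$ in the squared inner integral and hence no $\int(Bx^2-u)^{-1}\,du\asymp\log$. Your identification of the source of $\log x$ therefore fails along with the estimate.

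The paper's proof uses only two pieces and, crucially, a different key lemma. On $[0,Bx^2/2]$ it bounds the inner $q$ via (\ref{q1}) and the $u$-integrated outer $q$ via (\ref{q5}), exactly as in (\ref{vt1}). On $[Bx^2/2,Bx^2]$ it bounds $q_u(w,z)$ via (\ref{q1}) and bounds the \emph{integrated} inner quantity directly by (\ref{q6}): $\int_0^{L(s)}e^{\sqrt{2}y}q_{Bx^2-u}(z,y)\,dy\le e^{\sqrt{2}z}\min\{1,(L(s)-z)/(Bx^2-u)^{1/2}\}$. Squaring, substituting $v=Bx^2-u$, and integrating gives $\int_0^{Bx^2/2}\min\{1,(L(s)-z)^2/v\}\,dv\le C\bigl(1+(L(s)-z)^2\log x\bigr)$; this is the true source of the $\log x$. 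The remaining $z$-integral $\int_0^{L(s)}e^{\sqrt{2}z}\sin(\pi z/L(s))\bigl(1+(L(s)-z)^2\log x\bigr)\,dz\asymp e^{\sqrt{2}L(s)}\log x/L(s)$ then closes the bound. The idea you miss is that when the test function carries the weight $e^{\sqrt{2}y}$, the pointwise bounds (\ref{q2}) and (\ref{q3}) are no longer sharp enough on $[Bx^2/2,Bx^2]$ and must be replaced by the integrated estimate (\ref{q6}).
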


\begin{proof}
By summing over the contributions of the particles at time $r$ and applying Lemma \ref{fXvar}, we get
\begin{align}\label{condvarphi}
\mbox{Var}(X'(\phi)|{\cal F}_r) &\leq \sum_{i=1}^{N(r)} \int_0^{L(s)} e^{2 \sqrt{2} y} \phi\bigg( \frac{y}{L(s)} \bigg)^2 q_{Bx^2}(X_i(r), y) \: dy \nonumber \\
&\qquad + 2 \sum_{i=1}^{N(r)} \int_0^{Bx^2} q_u(X_i(r), z) \bigg( \int_0^{L(s)} e^{\sqrt{2} y} \phi\bigg( \frac{y}{L(s)} \bigg) q_{Bx^2 - u}(z,y) \: dy \bigg)^2 \: dz \: du
\end{align}

To bound the first term on the right-hand side of (\ref{condvarphi}), note that if $0 < w < L(s)$, then, by Lemma \ref{stripdensity} and (\ref{q1}),
\begin{align}\label{pt1}
\int_0^{L(s)} e^{2 \sqrt{2} y} \phi\bigg( \frac{y}{L(s)} \bigg)^2 q_{Bx^2}(w, y) \: dy
&\leq \frac{C e^{\sqrt{2} w}}{L(s)} \sin \bigg( \frac{\pi w}{L(s)} \bigg) \int_0^{L(s)} e^{\sqrt{2} y} \sin \bigg( \frac{\pi y}{L(s)} \bigg) \: dw \nonumber \\
&\leq \frac{C e^{\sqrt{2} w} e^{\sqrt{2} L(s)}}{L(s)^2} \sin \bigg( \frac{\pi w}{L(s)} \bigg).
\end{align}
We bound the second term on the right-hand side of (\ref{condvarphi}) by breaking the outer integral into two pieces.  Using (\ref{q5}), if $0 < w < L(s)$, then
\begin{align}\label{pt2}
&\int_0^{Bx^2/2} \int_0^{L(s)} q_u(w,z) \bigg( \int_0^{L(s)} e^{\sqrt{2} y} \phi\bigg( \frac{y}{L(s)} \bigg) q_{Bx^2-u}(z,y) \: dy \bigg)^2 \: dz \: du \nonumber \\
&\qquad \leq \int_0^{Bx^2/2} \int_0^{L(s)} q_u(w,z) \bigg( \int_0^{L(s)} \frac{C}{L(s)} e^{\sqrt{2} z} \sin \bigg( \frac{\pi z}{L(s)} \bigg) \sin \bigg( \frac{\pi y}{L(s)} \bigg) \: dy \bigg)^2 \: dz \: du \nonumber \\
&\qquad \leq C \int_0^{Bx^2/2} \int_0^{L(s)} q_u(w,z) e^{2 \sqrt{2} z} \sin \bigg( \frac{\pi z}{L(s)} \bigg)^2 \: dz \: du \nonumber \\
&\qquad \leq C \int_0^{L(s)} e^{\sqrt{2} w} e^{\sqrt{2} z} \sin \bigg( \frac{\pi z}{L(s)} \bigg)^2 \frac{w(L(s) - z)}{L(s)} \: dz \nonumber \\
&\qquad \leq \frac{C e^{\sqrt{2} w} e^{\sqrt{2} L(s)}}{L(s)^2}.
\end{align}

Furthermore, by using (\ref{q6}) in the third line, making the substitution $v = Bx^2 - u$ in the fourth line, and breaking the inner integral into the piece from $0$ to $1$ and the piece from $1$ to $Bx^2/2$ in the fifth line, we get
\begin{align}\label{pt3}
&\int_{Bx^2/2}^{Bx^2} \int_0^{L(s)} q_u(w,z) \bigg( \int_0^{L(s)} e^{\sqrt{2} y} \phi\bigg( \frac{y}{L(s)} \bigg) q_{Bx^2-u}(z,y) \: dy \bigg)^2 \: dz \: du \nonumber \\
&\qquad \leq \int_{Bx^2/2}^{Bx^2} \int_0^{L(s)} \frac{C}{L(s)} e^{\sqrt{2} w} \sin \bigg( \frac{\pi w}{L(s)} \bigg) e^{-\sqrt{2} z} \sin \bigg( \frac{\pi z}{L(s)} \bigg) \bigg( \int_0^{L(s)} e^{\sqrt{2} y} q_{Bx^2 - u}(z,y) \: dy \bigg)^2 \: dz \: du \nonumber \\
&\qquad \leq \frac{C e^{\sqrt{2} w}}{L(s)} \sin \bigg( \frac{\pi w}{L(s)} \bigg) \int_{Bx^2/2}^{Bx^2} \int_0^{L(s)} e^{\sqrt{2} z} \sin \bigg( \frac{\pi z}{L(s)} \bigg) \min \bigg\{1, \frac{(L(s) - z)^2}{Bx^2 - u} \bigg\} \: dz \: du \nonumber \\
&\qquad \leq \frac{C e^{\sqrt{2} w}}{L(s)} \sin \bigg( \frac{\pi w}{L(s)} \bigg) \int_0^{L(s)} e^{\sqrt{2} z} \sin \bigg( \frac{\pi z}{L(s)} \bigg) \bigg( \int_0^{Bx^2/2} \min \bigg\{1, \frac{(L(s) - z)^2}{v} \bigg\} \: dv \bigg) \: dz \nonumber \\
&\qquad \leq \frac{C e^{\sqrt{2} w}}{L(s)} \sin \bigg( \frac{\pi w}{L(s)} \bigg) \int_0^{L(s)} e^{\sqrt{2} z} \sin \bigg( \frac{\pi z}{L(s)} \bigg) \big(1 + (L(s) - z)^2 \log x \big) \: dz \nonumber \\
&\qquad \leq \frac{C e^{\sqrt{2} w} e^{\sqrt{2} L(s)} \log x}{L(s)^2} \sin \bigg( \frac{\pi w}{L(s)} \bigg).
\end{align}
The expressions in (\ref{pt1}), (\ref{pt2}), and (\ref{pt3}) are all bounded by $(C e^{\sqrt{2} w} e^{\sqrt{2} L(s)} \log x)/L(s)^2$.  By summing over the positions of the particles at time $r$, we get that the right-hand side of (\ref{condvarphi}) is bounded by $(CY(r) e^{\sqrt{2} L(s)} \log x)/x^2$, which implies the result.
\end{proof}

\subsection{Proofs of Theorems \ref{numthm}, \ref{config1}, and \ref{config2}}

In this subsection, we use the results of Section \ref{momfunsec} to prove Theorems \ref{numthm}, \ref{config1}, and \ref{config2}.

\begin{proof}[Proof of Theorem \ref{numthm}]
Let $\kappa = 1$.  Choose $B$ as at the beginning of Section \ref{momfunsec}.  Choose $s \in [(B + 3 \kappa)x^2, (1 - \delta)t]$, and let $r = s - Bx^2$ as in Section \ref{momfunsec}.  Throughout the proof, the constants $C$, $C'$, and $C''$ will be allowed to depend on $B$, $\delta$ and $\varepsilon$. Recall that $X(1)$ denotes the value of $X(f)$ when $f(x) = 1$ for all $x$.  By Lemma \ref{fphi},
\begin{equation}\label{XN}
\P(X(1) = N(s)) > 1 - 3 \varepsilon
\end{equation}
for sufficiently large $x$.  By Lemma \ref{meanXf},
\begin{equation}\label{955}
(1 - 2 \varepsilon) {\hat Z} \frac{\pi}{L(s)^2} e^{-\pi^2 B x^2/2L(s)^2} \leq \E[X(1)|{\cal F}_r] \leq (1 + 2 \varepsilon) {\hat Z} \frac{\pi}{L(s)^2} e^{-\pi^2 B x^2/2L(s)^2}.
\end{equation}
Using Lemma \ref{varXf} and the conditional Chebyshev's Inequality,
\begin{equation}\label{cheb}
\P \bigg(\big|X(1) - \E[X(1)|{\cal F}_r] \big| > \frac{1}{2} \E[X(1)|{\cal F}_r] \bigg| {\cal F}_r \bigg) \leq \frac{C Y(r) e^{\sqrt{2} L(s)}}{x^{11/2} \E[X(1)|{\cal F}_r]^2} \leq \frac{C Y(r) e^{\sqrt{2} L(s)}}{x^{3/2} {\hat Z}^2}.
\end{equation}
By (\ref{ZZhat}), Proposition \ref{Zlower}, Proposition \ref{Zupper}, and Proposition \ref{Yupper}, there are constants $C$, $C'$ and $C''$ such that with probability at least $1 - 4 \varepsilon$, we have
\begin{equation}\label{newZ}
C' x^{-1} \exp \big((3 \pi^2)^{1/3}(t - r)^{1/3} \big) \leq {\hat Z} \leq C'' x^{-1} \exp \big((3 \pi^2)^{1/3}(t - r)^{1/3} \big)
\end{equation}
and
\begin{equation}\label{newY}
Y(r) \leq C x^{-1} \exp \big((3 \pi^2)^{1/3}(t - r)^{1/3} \big).
\end{equation}
Thus, on an event of probability at least $1 - 4 \varepsilon$, the quantity on the right-hand side of (\ref{cheb}) is bounded above by $$C x^{-1/2} \exp \big(\sqrt{2} L(s) - (3 \pi^2)^{1/3}(t - r)^{1/3} \big) = C x^{-1/2} \exp \big( (3 \pi^2)^{1/3}(t - s)^{1/3} - (3 \pi^2)^{1/3}(t - r)^{1/3} \big),$$ which tends to zero as $x \rightarrow \infty$ because the exponential term is bounded by a constant.  By (\ref{955}), on this same event of probability $1 - 4 \varepsilon$, there are constants $C'$ and $C''$ such that $$C' x^{-3} \exp \big((3 \pi^2)^{1/3}(t - s)^{1/3} \big) \leq \frac{1}{2} \E[X(1)|{\cal F}_r] \leq \frac{3}{2} \E[X(1)|{\cal F}_r] \leq C'' x^{-3} \exp \big((3 \pi^2)^{1/3}(t - s)^{1/3} \big).$$  Combining these results with (\ref{XN}), we get $$\P \bigg( C' x^{-3} \exp \big((3 \pi^2)^{1/3}(t - s)^{1/3} \big) \leq N(s) \leq C'' x^{-3} \exp \big((3 \pi^2)^{1/3}(t - s)^{1/3} \big) \bigg) > 1 - 7 \varepsilon$$ for sufficiently large $x$.  Because the constants $C'$ and $C''$ do not depend on $s$ and
$$(3 \pi^2)^{1/3} (t - s)^{1/3} = \sqrt{2} \bigg(1 - \frac{s}{\tau x^3} \bigg)^{1/3} x,$$ the result follows.
\end{proof}

We will now prove slightly more general versions of Theorems \ref{config1} and \ref{config2}.  The generalizations will be useful later for the proof of Theorem \ref{rtthm}.  Instead of setting $s = u x^3$ and letting $x \rightarrow \infty$, we consider a sequence $(x_n)_{n=1}^{\infty}$ tending to infinity, and then a sequence of times $(s_n)_{n=1}^{\infty}$ such that $s_n \sim u x_n^3$.  We define $X_i(s)$, $N(s)$, $X(f)$, and $X'(\phi)$ as before, but with the initial particle located at $x_n$ rather than at $x$.  Note that $X_i(s)$, $N(s)$, $X(f)$, and $X'(\phi)$ depend on $n$, even though this dependence is not recorded in the notation.  The following proposition implies Theorem \ref{config1}.  Here $\sim$ means that the ratio of the two sides tends to one as $n \rightarrow \infty$.

\begin{Prop}
Suppose $0 < u < \tau$.  Consider a sequence of times $(s_n)_{n=1}^{\infty}$ such that $s_n \sim u x_n^3$.  Let $$\chi_n(u) = \frac{1}{N(s_n)} \sum_{i=1}^{N(s_n)} \delta_{X_i(s_n)}.$$  Define $\mu$ as in Theorem \ref{config1}.  Then $\chi_n(u) \Rightarrow \mu$ as $n \rightarrow \infty$.
\end{Prop}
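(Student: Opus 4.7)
The plan is to reduce the weak convergence $\chi_n(u)\Rightarrow \mu$ to the statement that for each bounded continuous $f:[0,\infty)\to\R$,
$$\frac{1}{N(s_n)}\sum_{i=1}^{N(s_n)}f(X_i(s_n))\;\rightarrow_p\; \int_0^\infty f(y)g(y)\,dy,$$
and then to pass to a countable convergence-determining family. Because each $\chi_n(u)$ is a probability measure and $\mu$ is a deterministic probability measure, the case $f\equiv 1$ (which recovers total mass $\int g=1$) handles the total-mass concern, and weak convergence in probability follows by applying the displayed convergence to every $f$ in a countable uniformly dense subfamily of $C_b((0,\infty))$.

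Fix such an $f$ and $\varepsilon>0$. Following the proof of Theorem \ref{numthm}, take $\kappa=1$, choose $B$ as at the start of Section \ref{momfunsec}, and set $r_n=s_n-Bx_n^2$. Since $s_n\sim u x_n^3$ with $0<u<\tau$, for $n$ large $s_n$ lies in the admissible range $[(B+3\kappa)x_n^2,(1-\delta)t_n]$ for some $\delta>0$, where $t_n=\tau x_n^3$. By Lemma \ref{fphi}, with probability at least $1-3\varepsilon$ one has $X(f)=\sum_i f(X_i(s_n))$ and $X(1)=N(s_n)$, so on this event $\int f\,d\chi_n(u)=X(f)/X(1)$.

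Applying Lemma \ref{meanXf} to $f$ and to the constant function $1$ yields, with $E_n:=\hat Z\,\pi L(s_n)^{-2}\exp(-\pi^2 Bx_n^2/2L(s_n)^2)$,
$$\bigl|\E[X(f)\mid{\cal F}_{r_n}]-E_n{\textstyle\int_0^\infty fg}\bigr|\le 2\varepsilon\|f\|E_n,\qquad \bigl|\E[X(1)\mid{\cal F}_{r_n}]-E_n\bigr|\le 2\varepsilon E_n.$$
Lemma \ref{varXf} combined with the conditional Chebyshev inequality then gives, exactly as in the derivation of (\ref{cheb}),
$$\P\bigg(\bigl|X(f)-\E[X(f)\mid{\cal F}_{r_n}]\bigr|>\tfrac{1}{2}\E[X(f)\mid{\cal F}_{r_n}]\,\bigg|\,{\cal F}_{r_n}\bigg)\;\le\; \frac{C(f)\,Y(r_n)e^{\sqrt{2}L(s_n)}}{x_n^{3/2}\,\hat Z^2},$$
together with the analogous bound for $X(1)$. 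On the event of probability at least $1-4\varepsilon$ on which (\ref{ZZhat}) and the conclusions of Propositions \ref{Zlower}, \ref{Zupper} and \ref{Yupper} all hold, the calculation following (\ref{cheb}) in the proof of Theorem \ref{numthm} shows that this Chebyshev bound is $O_f(x_n^{-1/2})\to 0$. Hence both $X(f)$ and $X(1)$ lie within a factor of $3/2$ of their conditional means, which themselves differ by the factor $(1+O(\varepsilon))\int_0^\infty fg$.

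Combining these estimates, $X(f)/X(1)$ lies within $C(f)\varepsilon$ of $\int_0^\infty fg$ on an event of probability at least $1-C'\varepsilon$ for all large $n$; letting $\varepsilon\downarrow 0$ yields $\int f\,d\chi_n(u)\to\int_0^\infty fg$ in probability, and hence $\chi_n(u)\Rightarrow\mu$. The main technical point, as in Theorem \ref{numthm}, is that one forms the \emph{ratio} of two concentrated quantities whose conditional expectations share the random prefactor $\hat Z$; this shared factor cancels in the quotient, and the remaining Chebyshev fluctuations are controlled by Propositions \ref{Zlower}--\ref{Yupper}. Apart from this ratio argument, everything is a direct parallel of the proof of Theorem \ref{numthm}.
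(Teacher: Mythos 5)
Your plan follows the paper's structure and correctly identifies the key ingredients (Lemma \ref{fphi}, Lemma \ref{meanXf}, Lemma \ref{varXf}, the conditional Chebyshev inequality, Propositions \ref{Zlower}--\ref{Yupper}), but there is a genuine gap in the Chebyshev step. You bound the probability that $X(f)$ deviates from its conditional mean by more than \emph{one half} of that mean, mirroring \eqref{cheb} verbatim. That bound only says $X(f)$ lies within a factor of $3/2$ of $\E[X(f)|\mathcal{F}_{r_n}]$; combined with the analogous bound for $X(1)$ it gives that $X(f)/X(1)$ lies within a multiplicative factor of roughly $3$ of $\int_0^\infty fg\,dy$. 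That is an order-of-magnitude estimate, appropriate for Theorem \ref{numthm}, but it is not the statement that $X(f)/X(1)$ lies within $C(f)\varepsilon$ of $\int fg$, so your final ``Combining these estimates\dots'' step does not follow from what precedes it. Convergence in probability requires a threshold in the Chebyshev inequality that is $o(1)$ relative to the conditional mean as $x_n \to \infty$, not a fixed half.

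There is a second, independent problem: the multiplicative Chebyshev inequality $\P\bigl(|X(f)-\E[X(f)|\mathcal{F}_{r_n}]| > \tfrac12 \E[X(f)|\mathcal{F}_{r_n}]\bigr) \le 4\textup{Var}(X(f)|\mathcal{F}_{r_n})/\E[X(f)|\mathcal{F}_{r_n}]^2$ is vacuous when $\E[X(f)|\mathcal{F}_{r_n}]$ is near zero, which happens whenever $f$ changes sign or $\int_0^\infty fg\,dy=0$. The paper sidesteps both issues at once by using an \emph{additive} threshold: it bounds $\P\bigl(|X(f)-\E[X(f)|\mathcal{F}_{r_n}]| > x_n^{-19/6}e^{\sqrt{2}L(s_n)}\,\big|\,\mathcal{F}_{r_n}\bigr) \le C Y(r_n) x_n^{5/6} e^{-\sqrt{2}L(s_n)}$, which tends to zero on the event \eqref{newY} regardless of $f$, and then observes via \eqref{newZ} that $x_n^{-19/6}e^{\sqrt{2}L(s_n)}$ is smaller than $\varepsilon\,\hat Z\,\pi L(s_n)^{-2}e^{-\pi^2Bx_n^2/2L(s_n)^2}$ for large $n$. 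Plugging these additive errors into the two-sided fraction $\E[X(f)|\mathcal{F}_{r_n}] \pm (\text{error})$ over $\E[X(1)|\mathcal{F}_{r_n}] \mp (\text{error})$ then gives $X(f)/X(1)$ within $O(\varepsilon)$ of $\int fg$ for every bounded continuous $f$, signed or not. You could repair your version by decomposing $f=f^+-f^-$ and replacing $\tfrac12$ by $\tfrac12\varepsilon$ in the Chebyshev threshold for each nonnegative part, but the additive route is cleaner and is what the paper does.
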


\begin{proof}
To show that $\chi_n(u) \Rightarrow \mu$ as $n \rightarrow \infty$, it suffices to show (see, for example, Theorem 16.16 of \cite{kall}) that for all bounded continuous functions $f: [0, \infty) \rightarrow \R$, we have
\begin{equation}\label{fconvprob}
\frac{1}{N(s_n)} \sum_{i=1}^{N(s_n)} f(X_i(s_n)) \rightarrow_p \int_0^{\infty} g(y) f(y) \: dy,
\end{equation}
where $g(y) = 2y e^{-\sqrt{2} y}$ for $y \geq 0$ and $\rightarrow_p$ denotes convergence in probability as $n \rightarrow \infty$.

Fix a bounded continuous function $f: [0, \infty) \rightarrow \R$.  Let $\varepsilon > 0$, and choose $B$ as at the beginning of 
Section \ref{momfunsec}.  Let $r_n = s_n - Bx^2$.  By Lemma \ref{fphi}, for sufficiently large $n$,
\begin{equation}\label{XfX1}
\P \bigg( \frac{1}{N(s_n)} \sum_{i=1}^{N(s_n)} f(X_i(s_n)) = \frac{X(f)}{X(1)} \bigg) > 1 - 3 \varepsilon.
\end{equation}

By Lemma \ref{varXf} and the conditional Chebyshev's Inequality,
\begin{align}\label{Xfcheb}
\P \bigg( \big|X(f) - \E[X(f)|{\cal F}_{r_n}] \big| > x_n^{-19/6} e^{\sqrt{2} L(s_n)} \bigg| {\cal F}_{r_n} \bigg) &\leq \frac{C Y(r_n) e^{\sqrt{2} L(s_n)}}{x_n^{11/2}} \cdot \frac{x_n^{19/3}}{e^{2 \sqrt{2} L(s_n)}} \nonumber \\
&\leq \frac{C Y(r_n) x_n^{5/6}}{e^{\sqrt{2} L(s_n)}}.
\end{align}
Both (\ref{newZ}) and (\ref{newY}) hold, with $r_n$ in place of $r$, with probability at least $1 - 4 \varepsilon$ for sufficiently large $n$.  Because $(t - r_n)^{1/3} - (t - s_n)^{1/3}$ is bounded by a constant, the expression obtained by replacing $Y(r_n)$ on the right-hand side of (\ref{Xfcheb}) by the upper bound from (\ref{newY}) tends to zero as $n \rightarrow \infty$, and thus is less than $\varepsilon$ for sufficiently large $n$.  The same convergence holds with $X(1)$ in place of $X(f)$ on the left-hand side of (\ref{Xfcheb}).  Thus, for sufficiently large $n$, on an event of probability at least $1 - 5 \varepsilon$, we have
$$\frac{\E[X(f)|{\cal F}_{r_n}] - x_n^{-19/6} e^{\sqrt{2} L(s_n)}}{\E[X(1)|{\cal F}_{r_n}] + x_n^{-19/6} e^{\sqrt{2} L(s_n)}} \leq \frac{X(f)}{X(1)} \leq \frac{\E[X(f)|{\cal F}_{r_n}] + x_n^{-19/6} e^{\sqrt{2} L(s_n)}}{\E[X(1)|{\cal F}_{r_n}] - x_n^{-19/6} e^{\sqrt{2} L(s_n)}}.$$
This inequality, when combined with Lemma \ref{meanXf}, becomes
\begin{align}
&\frac{{\hat Z} \pi L(s_n)^{-2} e^{-\pi^2 B x_n^2/2L(s_n)^2} (\int_0^{\infty} f(y) g(y) \: dy - 2 \|f\| \varepsilon) - x_n^{-19/6} e^{\sqrt{2} L(s_n)}}{{\hat Z} \pi L(s_n)^{-2} e^{-\pi^2 B x_n^2/2 L(s_n)^2} (1 + 2 \varepsilon) + x_n^{-19/6} e^{\sqrt{2}L(s_n)}} \nonumber \\
&\qquad \leq
\frac{X(f)}{X(1)} \leq \frac{{\hat Z} \pi L(s_n)^{-2} e^{-\pi^2 B x_n^2/2L(s_n)^2} (\int_0^{\infty} f(y) g(y) \: dy + 2 \|f\| \varepsilon) + x_n^{-19/6} e^{\sqrt{2} L(s_n)}}{{\hat Z} \pi L(s_n)^{-2} e^{-\pi^2 B x_n^2/2 L(s_n)^2} (1 - 2 \varepsilon) - x_n^{-19/6} e^{\sqrt{2}L(s_n)}}. \nonumber
\end{align}
When (\ref{newZ}) holds, we have $x_n^{-3} e^{\sqrt{2} L(s_n)} \leq C {\hat Z} L(s_n)^{-2}$, and thus for sufficiently large $n$, $$x_n^{-19/6} e^{\sqrt{2} L(s_n)} \leq {\hat Z} \pi L(s_n)^{-2} e^{-\pi^2 B x_n^2/2L(s_n)^2} \varepsilon.$$  Therefore, for sufficiently large $n$,
$$\frac{1}{1 + 3 \varepsilon} \bigg( \int_0^{\infty} f(y) g(y) \: dy - 2 \|f\| \varepsilon - \varepsilon \bigg) \leq \frac{X(f)}{X(1)} \leq \frac{1}{1 - 3 \varepsilon} \bigg( \int_0^{\infty} f(y) g(y) \: dy + 2 \|f\| \varepsilon + \varepsilon \bigg)$$
with probability at least $1 - 5 \varepsilon$.  In view of (\ref{XfX1}), we can let $\varepsilon \rightarrow 0$ to obtain (\ref{fconvprob}).
\end{proof}

The following proposition implies Theorem \ref{config2}.

\begin{Prop}
Suppose $0 < u < \tau$.  Consider a sequence of times $(s_n)_{n=1}^{\infty}$ such that $s_n \sim ux_n^3$ as $n \rightarrow \infty$.  Let $$\eta_n(u) = \frac{1}{Y(s_n)} \sum_{i=1}^{N(s_n)} e^{\sqrt{2} X_i(s_n)} \delta_{X_i(s_n)/L(s_n)}.$$  Let $\nu$ be defined as in Theorem \ref{config2}.  Then $\eta_n(u) \Rightarrow \nu$ as $n \rightarrow \infty$.
\end{Prop}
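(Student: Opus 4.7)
The plan is to mimic the proof of the preceding proposition, replacing the pair of Lemmas \ref{meanXf} and \ref{varXf} by Lemmas \ref{meanXphi} and \ref{varXphi}. By Theorem 16.16 of \cite{kall}, to show $\eta_n(u) \Rightarrow \nu$ it suffices to prove that for every bounded continuous $\phi:[0,\infty)\to\R$,
$$\frac{1}{Y(s_n)}\sum_{i=1}^{N(s_n)} e^{\sqrt{2}X_i(s_n)}\phi\!\left(\frac{X_i(s_n)}{L(s_n)}\right) \rightarrow_p \int_0^1 \phi(y)h(y)\,dy$$
as $n\to\infty$. Fix such a $\phi$ and $\varepsilon>0$, choose $B$ as at the beginning of Section \ref{momfunsec}, and set $r_n=s_n-Bx_n^2$. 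By Lemma \ref{fphi} applied to both $\phi$ (restricted to $[0,1]$) and the constant function $1$, together with the fact that on the event $A$ that no particle exceeds $L(s)$ during $[r_n,s_n]$ we also have $Y(s_n)=X'(1)$, the displayed ratio equals $X'(\phi)/X'(1)$ with probability at least $1-3\varepsilon$ for sufficiently large $n$.

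Applying Lemma \ref{meanXphi} to both $\phi$ and $1$ (using $\int_0^1 h = 1$), and writing $F_n = (4\hat Z/\pi)\,e^{-\pi^2 Bx_n^2/(2L(s_n)^2)}$,
$$\E[X'(\phi)|\cF_{r_n}] = F_n\!\left(\int_0^1\phi(y)h(y)\,dy + \theta_\phi\right), \qquad \E[X'(1)|\cF_{r_n}] = F_n(1+\theta_1),$$
with $|\theta_\phi|\le\|\phi\|\varepsilon$ and $|\theta_1|\le\varepsilon$. By (\ref{ZZhat}) together with Propositions \ref{Zlower}, \ref{Zupper}, and \ref{Yupper}, and using that $(t-r_n)^{1/3}-(t-s_n)^{1/3}$ is bounded, there is an event of probability at least $1-O(\varepsilon)$ on which $\hat Z$ is sandwiched between positive constants times $x_n^{-1}e^{\sqrt{2}L(s_n)}$ and $Y(r_n) \le Cx_n^{-1}e^{\sqrt{2}L(s_n)}$. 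Setting $\eta_n = x_n^{-5/4}e^{\sqrt{2}L(s_n)}$, Lemma \ref{varXphi} and the conditional Chebyshev inequality yield
$$\P\bigl(|X'(\phi)-\E[X'(\phi)|\cF_{r_n}]|>\eta_n \bigm| \cF_{r_n}\bigr) \le \frac{CY(r_n)e^{\sqrt{2}L(s_n)}\log x_n}{x_n^2\eta_n^2} = O(x_n^{-1/2}\log x_n),$$
and the analogous bound for $X'(1)$; moreover $\eta_n/F_n = O(x_n^{-1/4}) \to 0$.

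Combining, on an event of probability at least $1-O(\varepsilon)$ one may sandwich
$$\frac{\E[X'(\phi)|\cF_{r_n}]-\eta_n}{\E[X'(1)|\cF_{r_n}]+\eta_n} \le \frac{X'(\phi)}{X'(1)} \le \frac{\E[X'(\phi)|\cF_{r_n}]+\eta_n}{\E[X'(1)|\cF_{r_n}]-\eta_n},$$
and substituting the mean estimates shows that both bounds converge to $\int_0^1\phi(y)h(y)\,dy$ up to additive error $O((\|\phi\|+1)\varepsilon)$. Letting $\varepsilon\to 0$ yields the required convergence in probability, hence $\eta_n(u)\Rightarrow\nu$. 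The only genuinely new feature compared with the proof of the previous proposition is the extra $\log x$ factor in Lemma \ref{varXphi}, but this is easily absorbed: the relevant conditional means here are of order $x_n^{-1}e^{\sqrt{2}L(s_n)}$ rather than $x_n^{-3}e^{\sqrt{2}L(s_n)}$ as in the $N(s)$ argument, leaving a comfortable polynomial margin in $x_n$ when applying Chebyshev.
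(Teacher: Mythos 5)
Your proposal is correct and follows essentially the same route as the paper: reduce to convergence in probability of $X'(\phi)/X'(1)$ via Lemma \ref{fphi}, use Lemma \ref{meanXphi} for the conditional means, Lemma \ref{varXphi} with the conditional Chebyshev inequality for concentration, and the a priori bounds on $\hat{Z}$ and $Y(r_n)$ from Propositions \ref{Zlower}, \ref{Zupper}, \ref{Yupper} together with (\ref{ZZhat}). The only cosmetic difference is your choice of Chebyshev threshold $x_n^{-5/4}e^{\sqrt{2}L(s_n)}$ versus the paper's $x_n^{-4/3}e^{\sqrt{2}L(s_n)}$ — both make the Chebyshev bound vanish and keep the threshold negligible compared with $F_n\asymp x_n^{-1}e^{\sqrt{2}L(s_n)}$, so this is immaterial.
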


\begin{proof}
The proof is very similar to the proof of Theorem \ref{config1}.
It suffices to show that we have $\P(X_1(s_n) < L(s_n)) \rightarrow 1$ as $n \rightarrow \infty$, and that for all bounded continuous functions $\phi: [0, 1] \rightarrow \R$,
\begin{equation}\label{phiprob}
\frac{1}{Y(s_n)} \sum_{i=1}^{N(s_n)} e^{\sqrt{2} X_i(s_n)} \phi \bigg( \frac{X_i(s_n)}{L(s_n)} \bigg) \rightarrow_p \int_0^1 \phi(y) h(y) \: dy.
\end{equation}
That $\P(X_1(s_n) < L(s_n)) \rightarrow 1$ as $n \rightarrow \infty$ follows immediately from Lemma \ref{withina} with $a = 0$.

Fix a bounded continuous function $\phi: [0,1] \rightarrow \R$.  Let $\varepsilon > 0$, and choose $B$ as at the beginning of Section \ref{momfunsec}.  Let $r_n = s_n - Bx_n^2$.  Let $X'(1)$ denote the value of $X'(\phi)$ when $\phi(x) = 1$ for all $x \in [0,1]$.  By Lemma \ref{fphi}, for sufficiently large $x$,
\begin{equation}\label{YXprime}
\P \bigg( \frac{1}{Y(s_n)} \sum_{i=1}^{N(s_n)} e^{\sqrt{2} X_i(s_n)} \phi \bigg( \frac{X_i(s_n)}{L(s_n)} \bigg) = \frac{X'(\phi)}{X'(1)} \bigg) > 1 - 3 \varepsilon.
\end{equation}

By Lemma \ref{varXphi} and the conditional Chebyshev's Inequality,
\begin{align}\label{Xphicheb}
\P \bigg( \big|X'(\phi) - \E[X'(\phi)|{\cal F}_{r_n}] \big| > x_n^{-4/3} e^{\sqrt{2} L(s_n)} \bigg| {\cal F}_{r_n} \bigg) &\leq \frac{C Y(r_n) e^{\sqrt{2} L(s_n)} \log x_n}{x_n^2} \cdot \frac{x_n^{8/3}}{e^{2 \sqrt{2} L(s_n)}} \nonumber \\
&\leq \frac{C Y(r_n) x_n^{2/3} \log x_n}{e^{\sqrt{2} L(s_n)}}.
\end{align}
Recall that (\ref{newZ}) and (\ref{newY}) both hold with probability at least $1 - 4 \varepsilon$ for sufficiently large $n$.  The expression obtained by replacing $Y(r_n)$ with the right-hand side of (\ref{newY}) on the right-hand side of (\ref{Xphicheb}) tends to zero as $x_n \rightarrow \infty$, and the same result holds when $X'(\phi)$ is replaced by $X'(1)$ on the left-hand side.  Thus, for sufficiently large $n$, on an event of probability at least $1 - 5 \varepsilon$, we have
$$\frac{\E[X'(\phi)|{\cal F}_{r_n}] - x_n^{-4/3} e^{\sqrt{2} L(s_n)}}{\E[X'(1)|{\cal F}_{r_n}] + x_n^{-4/3} e^{\sqrt{2} L(s_n)}} \leq \frac{X'(\phi)}{X'(1)} \leq \frac{\E[X'(\phi)|{\cal F}_{r_n}] + x_n^{-4/3} e^{\sqrt{2} L(s_n)}}{\E[X'(1)|{\cal F}_{r_n}] - x_n^{-4/3} e^{\sqrt{2} L(s_n)}}.$$
Combining this inequality with Lemma \ref{meanXphi} gives
\begin{align}
&\frac{4 \pi^{-1} {\hat Z} e^{-\pi^2Bx_n^2/2L(s_n)^2}(\int_0^1 \phi(y) h(y) \: dy - \|\phi\| \varepsilon) - x_n^{-4/3} e^{\sqrt{2}L(s_n)}}{4 \pi^{-1} {\hat Z} e^{-\pi^2Bx_n^2/2L(s_n)^2}(1 + \varepsilon) + x_n^{-4/3} e^{\sqrt{2} L(s_n)}} \nonumber \\
&\qquad \leq \frac{X'(\phi)}{X'(1)} \leq \frac{4 \pi^{-1} {\hat Z} e^{-\pi^2Bx_n^2/2L(s_n)^2}(\int_0^1 \phi(y) h(y) \: dy + \|\phi\| \varepsilon) + x_n^{-4/3} e^{\sqrt{2}L(s_n)}}{4 \pi^{-1} {\hat Z} e^{-\pi^2Bx_n^2/2L(s_n)^2}(1 - \varepsilon) - x_n^{-4/3} e^{\sqrt{2} L(s_n)}}. \nonumber
\end{align}
Because $x_n^{-1} e^{\sqrt{2} L(s_n)} \leq C {\hat Z}$ when (\ref{newZ}) holds, we have $x_n^{-4/3} e^{\sqrt{2} L(s_n)} \leq 4 \pi^{-1} {\hat Z} e^{-\pi^2Bx_n^2/2L(s_n)^2} \varepsilon$ for sufficiently large $n$ when (\ref{newZ}) holds.  Therefore, for sufficiently large $n$,
$$\frac{1}{1 + 2 \varepsilon} \bigg( \int_0^1 \phi(y) h(y) \: dy - \|\phi\| \varepsilon - \varepsilon \bigg) \leq \frac{X'(\phi)}{X'(1)} \leq \frac{1}{1 - 2 \varepsilon} \bigg( \int_0^1 \phi(y) h(y) \: dy + \|\phi\| \varepsilon + \varepsilon \bigg)$$
with probability at least $1 - 5 \varepsilon$.  In view of (\ref{YXprime}), we can let $\varepsilon \rightarrow 0$ to obtain (\ref{phiprob}).
\end{proof}

\section{Position of the right-most particle}\label{rtsec}

In this section, we prove Theorem \ref{rtthm}.  Consider branching Brownian motion without killing and with a drift of $-\sqrt{2}$.  Let $u(t,w)$ be the probability that if at time zero there is a single particle at the origin, then the position of the right-most particle at time $t$ will be greater than or equal to $w$. Define $m(t) = \inf\{w : u(t,w) \ge 1/2\}$.
By Proposition 8.2 on page 127 of \cite{bram83}, applied with $y_0 = -1$, and by Corollary 1 on page 130 of \cite{bram83}, applied with $\alpha(r,t) = -1$, there exist positive constants $T$, $C'$, $C''$, and $C_{12}$ such that if $t \geq T$, then
\begin{equation}\label{bramupper}
u(t,w) \leq C'' e^t \int_{-1}^0 \frac{e^{-(w + \sqrt{2}t - z)^2/2t}}{\sqrt{2 \pi t}} \big(1 - e^{-2(z+1)(w - m(t))/t} \big) \: dz
\end{equation}
and
\begin{equation}\label{bramlower}
u(t,w) \geq C' e^t \int_{-1}^0 \frac{e^{-(w + \sqrt{2}t - z)^2/2t}}{\sqrt{2 \pi t}} \big(1 - e^{-2(z+1)(w - m(t))/t} \big) \: dz
\end{equation}
for all $w \geq m(t) + 1$, where
\begin{equation}\label{735}
\bigg| m(t) + \frac{3}{2 \sqrt{2}} \log t \bigg| \leq C_{12}.
\end{equation}
See (8.4) and (8.18) of \cite{bram83} for the bounds on $m(t)$, and observe that $m(t)$ here corresponds to $m_{1/2}(t) - \sqrt{2} t$ in the notation of \cite{bram83}.

\begin{Lemma}\label{u1lem}
Suppose $0 < \gamma \leq 1$.  Suppose that $t = \gamma x^2$ and that $w = -(3/2 \sqrt{2}) \log t + y$, where $1 + C_{12} \leq y \leq C_{13} x$ for some positive constant $C_{13}$.  Then there exists $x_0 > 0$, depending on $\gamma$, such that for $x \geq x_0$, $$C' y e^{-\sqrt{2} y} e^{-y^2/2t} \leq u(t,w) \leq C'' y e^{-\sqrt{2} y} e^{-y^2/2t},$$ where $C'$ and $C''$ are positive constants that do not depend on $\gamma$.
\end{Lemma}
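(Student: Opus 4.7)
The plan is to plug the specific $w = -\frac{3}{2\sqrt{2}}\log t + y$ into Bramson's bounds \eqref{bramupper} and \eqref{bramlower} and show that, uniformly in $z \in [-1,0]$, the integrand is asymptotically proportional to $y e^{-\sqrt 2 y} e^{-y^2/2t}$. First I would check the hypotheses: since $|m(t) + \frac{3}{2\sqrt{2}} \log t| \leq C_{12}$, the condition $y \geq 1 + C_{12}$ forces $w - m(t) \in [y - C_{12}, y + C_{12}]$, so in particular $w \geq m(t) + 1$. The condition $t \geq T$ is automatic once $x \geq \sqrt{T/\gamma}$, which fixes $x_0$.

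Next I would do the algebraic simplification of the Gaussian factor. Expanding
$$(w + \sqrt{2}t - z)^2 = (w-z)^2 + 2\sqrt{2}t(w-z) + 2t^2,$$
we get
$$e^{t}\,\frac{e^{-(w+\sqrt 2 t - z)^2/2t}}{\sqrt{2\pi t}} = \frac{e^{-(w-z)^2/2t}\, e^{-\sqrt 2 (w-z)}}{\sqrt{2\pi t}}.$$
Since $e^{-\sqrt 2 w} = t^{3/2} e^{-\sqrt 2 y}$, this becomes
$$\frac{t}{\sqrt{2\pi}}\, e^{-\sqrt 2 y}\, e^{\sqrt 2 z}\, e^{-(w-z)^2/2t}.$$
Now I would show the Gaussian is essentially $e^{-y^2/2t}$. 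Writing $a = -\frac{3}{2\sqrt 2}\log t$ so that $w - z = y + (a-z)$, one has
$$\frac{(w-z)^2}{2t} = \frac{y^2}{2t} + \frac{y(a-z)}{t} + \frac{(a-z)^2}{2t},$$
and since $|a - z| \leq C \log x$, $y \leq C_{13} x$, $t = \gamma x^2$, the last two terms are $O((\log x)/x) = o(1)$ uniformly in $z \in [-1,0]$. Hence $e^{-(w-z)^2/2t} = e^{-y^2/2t}(1 + o(1))$, with the $o(1)$ uniform.

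For the second factor, note $2(z+1)(w-m(t))/t = O(y/t) = O(1/x) \to 0$ uniformly for $z \in [-1,0]$, so
$$1 - e^{-2(z+1)(w-m(t))/t} = \frac{2(z+1)(w-m(t))}{t}(1 + o(1)).$$
Multiplying, the integrand is asymptotically
$$\frac{2(w-m(t))}{\sqrt{2\pi}}\, e^{-\sqrt 2 y}\, e^{-y^2/2t}\, (z+1) e^{\sqrt 2 z},$$
and integrating $(z+1)e^{\sqrt 2 z}$ over $[-1,0]$ yields a strictly positive constant. Finally, the hypothesis $y \geq 1 + C_{12}$ and the Bramson bound on $m(t)$ give $w - m(t) \asymp y$ (bounded above and below by constant multiples of $y$), which supplies the factor of $y$. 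Plugging back into \eqref{bramupper} and \eqref{bramlower} gives the claimed two-sided bound.

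The main obstacle is purely bookkeeping: one must verify uniformity of the two $o(1)$ estimates over the whole range $1 + C_{12} \leq y \leq C_{13} x$ and $z \in [-1,0]$, and in particular check that the Gaussian approximation does not degrade near the upper end $y = C_{13} x$ (this is why $y/x$ times $(\log x)/x$ still tends to zero). No deep new idea is needed beyond careful substitution.
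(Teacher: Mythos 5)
Your proposal is correct and follows essentially the same route as the paper: substitute the specific $w$ into Bramson's bounds \eqref{bramupper}--\eqref{bramlower}, expand the Gaussian exponent to pull out $e^{-\sqrt{2}y}e^{-y^2/2t}$ and the $t^{3/2}$ that cancels the $1/\sqrt{t}$, linearize $1-e^{-2(z+1)(w-m(t))/t}$, and use $w - m(t)\asymp y$ (from \eqref{735} and $y\ge 1+C_{12}$) to extract the factor of $y$. The one small stylistic difference is in the lower bound: you keep the factor $(z+1)e^{\sqrt{2}z}$ inside the integral and integrate it over $[-1,0]$ to get a positive constant, whereas the paper restricts the lower-bound integral to $z\in[-1/2,0]$ so that $z+1\ge 1/2$ can be pulled out pointwise. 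Both are fine; yours is marginally cleaner because it uses a single uniform asymptotic for the bracket rather than separate upper- and lower-bound inequalities, but the content is the same. Your bookkeeping of the error terms (noting $|a-z|\le C\log x$, $y\le C_{13}x$, $t=\gamma x^2$, and that the threshold $x_0$ may depend on $\gamma$ while the final constants do not) correctly matches what the lemma requires.
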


\begin{Rmk}
{\em We note that similar bounds on $u$ may be obtained directly by PDE methods, and these have in fact been used in \cite{hnrr1} to reprove Bramson's logarithmic correction result of \cite{bram83} and to extend it to the setup of periodic branching rates (see \cite{hnrr2}).}
\end{Rmk}

\begin{proof}
We may assume that $x$ is large enough that $t \geq \max\{1, T\}$.  If $-1 \leq z \leq 0$, then using (\ref{735}),
\begin{equation}\label{zw1}
\frac{2(z+1)(w - m(t))}{t} \leq \frac{2(y - (3/2\sqrt{2}) \log t - m(t))}{t} \leq \frac{2(y + C_{12})}{t} \leq \frac{4y}{t}.
\end{equation}
It follows that
\begin{equation}\label{zwupper}
1 - e^{-2(z+1)(w - m(t))/t} \leq \frac{4y}{t}.
\end{equation}
Because $y \leq C_{13}x$ and $t = \gamma x^2$, the expression in (\ref{zw1}) tends to zero as $x \rightarrow \infty$.
Therefore, if $-1/2 \leq z \leq 0$, we have, for sufficiently large $x$,
\begin{equation}\label{zwlower}
1 - e^{-2(z+1)(w - m(t))/t} \geq \frac{1}{2} \cdot \frac{2(z+1)(w - m(t))}{t} \geq \frac{y - (3/2\sqrt{2}) \log t - m(t)}{2t} \geq \frac{y - C_{12}}{2t} \geq \frac{Cy}{t}.
\end{equation}

Next, observe that $$e^{-(w + \sqrt{2} t - z)^2/2t} = e^{-(y-z)^2/2t} e^{(3/2\sqrt{2}) (y - z) (\log t)/t} e^{-9(\log t)^2/16t} e^{-\sqrt{2}(y - z)} e^{-t} t^{3/2}.$$
If $-1 \leq z \leq 0$, then $e^{-\sqrt{2}} \leq e^{\sqrt{2} z} \leq 1$.  Also, $e^{-9(\log t)^2/16t}$ tends to one as $x \rightarrow \infty$.  Furthermore, because $t = \gamma x^2$ and $y \leq C_{13}x$, we have
$e^{(3/2\sqrt{2}) (y - z) (\log t)/t} \rightarrow 1$ and $e^{-(y - z)^2/2t}/e^{-y^2/2t} \rightarrow 1$ as $x \rightarrow \infty$.  It follows that there exists $x_0 > 0$, depending on $\gamma$, and positive constants $C'$ and $C''$ such that if $x \geq x_0$, then
\begin{equation}\label{expbd}
C' e^{-y^2/2t} e^{-\sqrt{2} y} e^{-t} t^{3/2} \leq e^{-(w + \sqrt{2} t - z)^2/2t} \leq C'' e^{-y^2/2t} e^{-\sqrt{2} y} e^{-t} t^{3/2}.
\end{equation}
Combining (\ref{bramupper}), (\ref{zwupper}), and (\ref{expbd}), we get that for sufficiently large $x$,
\begin{align}\label{ufin1}
u(t,w) &\leq C e^t \int_{-1}^0 \frac{e^{-(w + \sqrt{2}t - z)^2/2t}}{\sqrt{2 \pi t}} \big(1 - e^{-2(z+1)(w - m(t))/t} \big) \: dz \nonumber \\
&\leq C e^t \int_{-1}^0 \frac{e^{-y^2/2t} e^{-\sqrt{2} y} e^{-t} t^{3/2}}{\sqrt{2 \pi t}} \cdot \frac{y}{t} \: dz \nonumber \\
&\leq C y e^{-\sqrt{2} y} e^{-y^2/2t}.
\end{align}
By similar reasoning using (\ref{bramlower}), (\ref{zwlower}), and (\ref{expbd}), we get that for sufficiently large $x$,
\begin{align}\label{ufin2}
u(t,w) &\geq C e^t \int_{-1/2}^0 \frac{e^{-(w + \sqrt{2}t - z)^2/2t}}{\sqrt{2 \pi t}} \big(1 - e^{-2(z+1)(w - m(t))/t} \big) \: dz \nonumber \\
&\geq C e^t \int_{-1/2}^0 \frac{e^{-y^2/2t} e^{-\sqrt{2} y} e^{-t} t^{3/2}}{\sqrt{2 \pi t}} \cdot \frac{y}{t} \: dz \nonumber \\
&\geq C y e^{-\sqrt{2} y} e^{-y^2/2t}.
\end{align}
The result follows from (\ref{ufin1}) and (\ref{ufin2}).
\end{proof}

\begin{Lemma}\label{u2lem}
Suppose $0 < \gamma \leq 1$.  Suppose $t \leq \gamma x^2$ and $w \geq C_{14} x$ for some positive constant $C_{14}$.  Then there exists $x_0 > 0$, depending on $\gamma$, such that for $x \geq x_0$,
\begin{equation}\label{ut2}
u(t,w) \leq C \gamma^{-3/2} x^{-3} w e^{-\sqrt{2} w} e^{-C_{15}/\gamma}
\end{equation}
for some positive constants $C$ and $C_{15}$ that do not depend on $\gamma$.
\end{Lemma}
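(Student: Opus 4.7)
The plan is to mirror the approach used in Lemma \ref{u1lem}: start from Bramson's upper bound (\ref{bramupper}) and exploit that in the present regime $w$ dominates $\sqrt{t}$ (since $w \geq C_{14}x$ but $\sqrt{t} \leq \sqrt{\gamma}\,x$), so the Gaussian tail $e^{-w^2/(2t)}$ provides the $e^{-C_{15}/\gamma}$ saving.

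First I would verify that for $x$ large enough (depending on $\gamma$), $w \geq m(t) + 1$, which is a hypothesis of (\ref{bramupper}). By (\ref{735}) and $t \leq x^2$, we have $|m(t)| \leq C \log x + C_{12}$, which is $\ll C_{14} x \leq w$ for large $x$. I would then split into the main case $t \geq T$ and the auxiliary case $t < T$.

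For $t \geq T$ I apply (\ref{bramupper}). Using $1 - e^{-u} \leq u$ and $w - m(t) \leq 2w$ (valid for large $x$), the indicator factor is at most $C w/t$. Expanding
$$e^{-(w + \sqrt{2}t - z)^2/(2t)} = e^{-t}\, e^{-\sqrt{2}(w - z)}\, e^{-(w - z)^2/(2t)},$$
and bounding each factor uniformly for $z \in [-1, 0]$, the $e^t$ outside (\ref{bramupper}) cancels with $e^{-t}$, producing
$$u(t, w) \leq \frac{C w}{t^{3/2}}\, e^{-\sqrt{2} w}\, e^{-w^2/(2t)}.$$
The crucial trick is to split $e^{-w^2/(2t)} = e^{-w^2/(4t)} \cdot e^{-w^2/(4t)}$. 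With $v := w^2/(4t)$, the identity $\frac{1}{t^{3/2}}\, e^{-w^2/(4t)} = \frac{8 v^{3/2}}{w^3}\, e^{-v}$ together with the elementary bound $v^{3/2} e^{-v} \leq C$ gives $\frac{1}{t^{3/2}}\, e^{-w^2/(4t)} \leq C/w^3$. Since $w \geq C_{14}x$ and $t \leq \gamma x^2$ force $v \geq C_{14}^2/(4\gamma)$, the remaining Gaussian factor contributes $e^{-w^2/(4t)} \leq e^{-C_{14}^2/(4\gamma)}$. Hence $u(t, w) \leq C w^{-2}\, e^{-\sqrt{2} w}\, e^{-C_{14}^2/(4\gamma)}$, and the estimate $1/w^2 \leq w/(C_{14}^3\, x^3)$ coming from $w \geq C_{14}x$, together with $\gamma^{-3/2} \geq 1$, upgrades this to the stated form with $C_{15} = C_{14}^2/4$.

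For the auxiliary case $t < T$, Bramson's bound does not apply, but the first-moment estimate suffices: by the many-to-one formula,
$$u(t, w) \leq e^t\, \P(B_t > w + \sqrt{2}t) \leq e^t\, e^{-(w + \sqrt{2}t)^2/(2t)} = e^{-\sqrt{2}w}\, e^{-w^2/(2t)},$$
using the Chernoff tail bound for Brownian motion. Since $e^{-w^2/(2t)} \leq e^{-C_{14}^2 x^2/(2T)}$ is super-polynomially small in $x$, it dominates the polynomial prefactor $\gamma^{-3/2} x^{-3} w\, e^{-C_{15}/\gamma}$ for $x \geq x_0(\gamma)$ sufficiently large.

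The main technical point is the split $e^{-w^2/(2t)} = e^{-w^2/(4t)} \cdot e^{-w^2/(4t)}$: trading half of the Gaussian decay against the $1/t^{3/2}$ prefactor produces the clean $1/w^3$ factor, while leaving enough exponential decay to yield the $e^{-C_{15}/\gamma}$ saving. The rest is bookkeeping, in particular ensuring $C_{15}$ is independent of $\gamma$ and $C_{14}$, and converting $1/w^2$ to $w/x^3$ via $w \geq C_{14}x$.
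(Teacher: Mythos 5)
Your proof is correct and follows essentially the same path as the paper's: Bramson's upper bound (\ref{bramupper}) with the linearization $1 - e^{-u} \leq u$ for $t \geq T$, and a many-to-one plus Gaussian tail estimate for $t < T$. The only variation is in handling the factor $t^{-3/2}e^{-w^2/(2t)}$, where the paper notes that $t \mapsto t^{-3/2}e^{-C_{14}^2 x^2/t}$ is increasing for $t \leq 2C_{14}^2 x^2/3$ and substitutes $t = \gamma x^2$, whereas you split $e^{-w^2/(2t)} = e^{-w^2/(4t)}\cdot e^{-w^2/(4t)}$ and use $v^{3/2}e^{-v} \leq C$ to absorb the $t^{-3/2}$ prefactor — a cosmetically cleaner step that avoids the implicit restriction $\gamma \leq 2C_{14}^2/3$ but carries the same content.
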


\begin{proof}
If $-1 \leq z \leq 0$, then
\begin{equation}\label{1281}
1 - e^{-2(z+1)(w - m(t))/t} \leq \frac{2(z+1)(w - m(t))}{t} \leq \frac{Cw}{t}.
\end{equation}
Also, for sufficiently large $x$,
\begin{equation}\label{1282}
e^{-(w + \sqrt{2}t - z)^2/2t} = e^{-t} e^{-\sqrt{2}(w-z)} e^{-(w-z)^2/2t} \leq C e^{-t} e^{-\sqrt{2} w} e^{-C_{14}^2 x^2/t}.
\end{equation}
By (\ref{bramupper}), (\ref{1281}), and (\ref{1282}), we get that when $T \leq t \leq \gamma x^2$, $$u(t,w) \leq Cw e^{-\sqrt{2} w} t^{-3/2} e^{-C_{14}^2 x^2/t}.$$ The function $t \mapsto t^{-3/2} e^{-C_{14}^2 x^2/t}$ is increasing when
$t \leq (2 C_{14}^2 x^2)/3$ which means that for $\gamma \leq 2 C_{14}^2/3$, we have $$u(t,w) \leq C \gamma^{-3/2} x^{-3} w e^{-\sqrt{2} w} e^{-C_{14}^2/2 \gamma}$$ whenever $T \leq t \leq \gamma x^2$.  This is enough to imply (\ref{ut2}) except in the case when $t < T$.  However, when $t < T$, by the Many-to-One Lemma and Markov's Inequality, $u(t,w)$ is bounded above by $e^t$ times the probability that an individual Brownian particle started at the origin is to the right of $w$ by time $t$.  For the purpose of obtaining an upper bound on $u(t,w)$, we may ignore the drift of $-\sqrt{2}$.
Therefore, using that $$\int_z^{\infty} e^{-x^2/2} \: dx \leq z^{-1} e^{-z^2/2},$$ we have
$$u(t,w) \leq e^t \int_{w/\sqrt{t}}^{\infty} \frac{1}{\sqrt{2 \pi}} e^{-x^2/2} \: dx \leq \frac{e^t \sqrt{t}}{\sqrt{2 \pi} w} e^{-w^2/2t} \leq \frac{e^T T}{\sqrt{2 \pi} w} e^{-w^2/2T}.$$  Because $w \geq C_{14} x$, this expression is bounded above by the right-hand side of (\ref{ut2}) for $x \geq x_0$, where $x_0$ depends on $\gamma$.
\end{proof}

We now return to the setting of Theorem \ref{rtthm}, in which there is initially a particle at $x$ and particles are killed when they reach the origin.

\begin{Lemma}\label{Dlem}
Let $\varepsilon > 0$.  Let $0 < u < \tau$, and let $s = ux^3$.  Let $\gamma > 0$.  Let $D$ be the number of particles that are killed at the origin between times $s - \gamma x^2$ and $s$.  Then there exists a positive constant $C$, depending on $u$ and $\varepsilon$ but not on $\gamma$, such that for sufficiently large $x$, $$\P \bigg( D > C \gamma x^{-1} \exp \big( (3 \pi^2)^{1/3} (t - s)^{1/3} \big) \bigg) \leq 6 \varepsilon.$$
\end{Lemma}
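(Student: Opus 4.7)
The strategy is to compute $E[D \mid \mathcal F_r]$ at $r = s - \gamma x^2$ via a many-to-one argument, bound it by $C\gamma\,Y(r)$ on a high-probability event, control $Y(r)$ via Proposition \ref{Yupper}, and conclude by conditional Markov.

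By the strong Markov property at time $r$, the descendants of the $N(r)$ particles evolve as independent BBMs, so $D = \sum_{j=1}^{N(r)} D_j$ with the $D_j$ independent given $\mathcal F_r$. For a single BBM with drift $-\sqrt 2$ started at $y>0$, the many-to-one lemma applied to the first-passage functional, together with the first-passage density of a Brownian motion with drift $-\sqrt 2$ to $0$, gives
\[
E_y[D_T] = \int_0^T e^u\,\frac{y}{\sqrt{2\pi u^3}}\,e^{-(y-\sqrt 2\,u)^2/(2u)}\,du = e^{\sqrt 2\,y}\cdot 2\Phi\!\left(-\frac{y}{\sqrt T}\right),
\]
after noting that $(y-\sqrt 2\,u)^2/(2u) = y^2/(2u) - \sqrt 2\,y + u$, so that the factor $e^u$ cancels the $e^{-u}$ and the remaining integrand is $e^{\sqrt 2\,y}$ times the first-passage density of a driftless BM to level $y$. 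Summing over the particles at time $r$ with $T=\gamma x^2$,
\[
E[D \mid \mathcal F_r] = \sum_{j} e^{\sqrt 2\, X_j(r)}\cdot 2\Phi\!\left(-\frac{X_j(r)}{x\sqrt\gamma}\right).
\]

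The main step is to bound this by $C\gamma\,Y(r)$ on a high-probability event. I would apply the general form of Theorem \ref{config2} (the version for sequences $r_n\sim u x_n^3$ established in the previous subsection, which applies since $r = s-\gamma x^2\sim u x^3$) to the bounded continuous test function $\phi(z) = 2\Phi(-z\,L(r)/(x\sqrt\gamma))$ on $[0,1]$. This yields that $Y(r)^{-1}\,E[D \mid \mathcal F_r]$ is close, with probability $\ge 1-\varepsilon$ and for sufficiently large $x$, to $\int_0^1 \phi(z)\,h(z)\,dz$ with $h(z) = (\pi/2)\sin(\pi z)$. A direct Gaussian computation, using $2\Phi(-a)\le 2 e^{-a^2/2}$, the bound $\sin(\pi z)\le\pi z$ near $z=0$, and the change of variable $w = zL(r)/(x\sqrt\gamma)$, gives $\int_0^1\phi\,h\le C_u\gamma$, where $C_u$ depends on $u$ only through the bounded ratio $L(r)/x$. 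Combined with Proposition \ref{Yupper}—which yields $Y(r)\le C\,x^{-1}e^{(3\pi^2)^{1/3}(t-r)^{1/3}}\le C'\,x^{-1}e^{(3\pi^2)^{1/3}(t-s)^{1/3}}$ with probability $\ge 1-\varepsilon$, since $(t-r)^{1/3}-(t-s)^{1/3}$ is bounded by a constant—we obtain $E[D \mid \mathcal F_r]\le C''\gamma\,x^{-1}e^{(3\pi^2)^{1/3}(t-s)^{1/3}}$ on a good event of probability $\ge 1-k\varepsilon$ for a small integer $k$. Conditional Markov's inequality now delivers the desired $6\varepsilon$ tail bound on $D$. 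For $\gamma\ge 1$, the cruder bound $E[D \mid \mathcal F_r]\le Y(r)$, which follows from the fact that $Y(u)+\#\{\text{absorbed in }[0,u]\}$ is a martingale, already suffices.

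The main technical delicacy is that Theorem \ref{config2} is only a weak-convergence statement, whereas I am applying it to a $\gamma$- and $x$-dependent test function $\phi$. I would bypass this by going back to the moment estimates of Lemmas \ref{meanXphi} and \ref{varXphi} and running the Chebyshev argument directly for this specific $\phi$, which yields $E[D \mid \mathcal F_r]/Y(r)\le C_u\gamma$ on an event of the right probability without needing to pass to the weak limit.
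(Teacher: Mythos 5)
Your route is genuinely different from the paper's. The paper conditions at $r=s-2\gamma x^2$, discretizes $[s-\gamma x^2,s]$ into unit blocks, bounds the number absorbed in each block by $e$ times the population (via Lemma \ref{meanXf} with $f\equiv 1$), and sums; the $\gamma$-dependence comes from the block count $\approx \gamma x^2$. You instead obtain the \emph{exact} identity $\E[D\mid\cF_r]=\sum_j e^{\sqrt 2 X_j(r)}\cdot 2\Phi\big(-X_j(r)/(\sqrt\gamma\,x)\big)$ (correct; the cleanest derivation is the $Y+D$ martingale you mention plus a Girsanov/Doob computation of $\E[Y(s)\mid\cF_r]$, which avoids even the stopping-line many-to-one lemma, which is not in the paper), recognize it as $Y(r)\cdot\eta(r)(\phi)$ with $\phi(z)=2\Phi(-zL(r)/(\sqrt\gamma\,x))$, and get the $\gamma$-dependence from the Gaussian integral $\int_0^1\phi\,h\le C_u\gamma$. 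This is tidier conceptually and gives a bit more than is needed.

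The one point you half-identify needs to be sharpened, and your proposed fix is not quite the right one. Lemma \ref{meanXphi} as \emph{stated} gives an additive error $\|\phi\|\varepsilon$, and for small $\gamma$ this swamps the main term $\int\phi\,h=O(\gamma)$, so the bound $\E[X'(\phi)\mid\cF_{r'}]\le C(\gamma+\varepsilon)\hat Z\,x^{-2}e^{\sqrt 2 L(r)}$ (after the normalization) is useless when $\gamma\ll\varepsilon$. What saves you --- and what you must extract from the \emph{proof} of Lemma \ref{meanXphi} rather than its statement --- is that for nonnegative $\phi$ the error factor $D$ there is a weighted average of the pointwise errors $D_{Bx^2}(X_i(r'),y)$, so the estimate is in fact multiplicative: $\E[X'(\phi)\mid\cF_{r'}]\le(1+\varepsilon)\cdot(\text{main term})$. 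Second, ``running the Chebyshev argument'' does not work here: the variance bound of Lemma \ref{varXphi} does not scale with $\gamma$, so Chebyshev's two-sided control degenerates as $\gamma\to 0$. Since you only need an upper bound, you should use one-sided conditional Markov on $X'(\phi)$, not Chebyshev. With (i) the multiplicative mean estimate, (ii) one-sided Markov, (iii) Lemma \ref{fphi} to identify $X'(\phi)$ with $\sum_j e^{\sqrt 2 X_j(r)}\phi(X_j(r)/L(r))$ on a good event, and (iv) the $\hat Z/Z/Y$ bounds, your argument closes.
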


\begin{proof}
Let $A = 2 \gamma$, and let $r = s - A x^2$.  For $u \in [s - \gamma x^2, s]$ define $X_u(1)$ in the same way as $X(1)$, but with $u$ playing the role of $s$.  That is, $X_u(1)$ consists of the number of particles at time $u$ whose ancestor was positioned to the left of $L(u)$ at time $v$ for all $v \in [r, u]$.  By the argument leading to Lemma \ref{fphi},
\begin{equation}\label{NXu}
\P(N(u) = X_u(1) \mbox{ for all }u \in [s - \gamma x^2, s]) > 1 - 3 \varepsilon
\end{equation}
for sufficiently large $x$.  By Lemma \ref{meanXf}, there is a positive constant $C$ such that $\E[X_u(1)|{\cal F}_r] \leq C x^{-2} {\hat Z}$ for sufficiently large $x$, where ${\hat Z}$ is defined as in (\ref{hatZdef}) but with $u$ in place of $s$.  The argument leading to (\ref{ZZhat}) implies that on an event with probability greater than $1 - \varepsilon$, we have $\E[X_u(1)|{\cal F}_r] \leq C x^{-2} Z(r)$ for all $u \in [s - \gamma x^2, s]$ for sufficiently large $x$, where $C$ is some other positive constant.

Define times $s - \gamma x^2 = u_0 < u_1 < \dots < u_j = s$, where the $u_i$ are chosen such that $1/2 \leq u_i - u_{i-1} \leq 1$ for $i = 1, 2 \dots, j$.  For $i = 0, 1, \dots, j-1$, let $D_i$ be the number of particles that are killed at the origin between times $u_i$ and $u_{i+1}$.  Let $D_i'$ be the number of such particles that are descended from particles at time $u_i$ that are counted in $X_{u_i}(1)$, meaning that their ancestor was positioned to the left of $L(u_i)$ throughout the time period $[r,u_i]$.  Even in the absence of killing between times $u_i$ and $u_{i+1}$, the expected number of descendants at time $u_{i+1}$ produced by a given particle at time $u_i$ is at most $e^{u_{i+1} - u_i} \leq e$.  It follows that for sufficiently large $x$, $$\E[D_i'|{\cal F}_r] \leq e \E[X_{u_i}(1)|{\cal F}_r] \leq C x^{-2} Z(r)$$ for all $i$ on an event of probability at least $1 - \varepsilon$, and therefore, $$\E \bigg[ \sum_{i=0}^{j-1} D_i' \bigg| {\cal F}_r \bigg] \leq C \gamma Z(r)$$ on an event of probability at least $1 - \varepsilon$.  In view of
Proposition \ref{Zupper}, there is a positive constant $C$ such that for sufficiently large $x$, $$\E \bigg[ \sum_{i=0}^{j-1} D_i' \bigg| {\cal F}_r \bigg] \leq  C \gamma x^{-1} \exp \big( (3 \pi^2)^{1/3} (t - r)^{1/3} \big)$$ on an event of probability at least $1 - 2 \varepsilon$.  By Markov's Inequality, there is a positive constant $C$ such that for sufficiently large $x$, $$\P \bigg( \sum_{i=0}^{j-1} D_i' > C \gamma x^{-1} \exp \big( (3 \pi^2)^{1/3} (t - r)^{1/3} \big) \bigg) \leq 3 \varepsilon.$$  Because $\P(D = \sum_{i=0}^{j-1} D_i') > 1 - 3 \varepsilon$ by (\ref{NXu}) and $$\exp\big((3 \pi^2)^{1/3}(t-r)^{1/3}\big) \leq C \exp\big((3 \pi^2)^{1/3}(t-s)^{1/3}\big),$$ the result follows.
\end{proof}

\begin{proof}[Proof of Theorem \ref{rtthm}]
Fix $d \in \R$.  Let $\gamma \in (0,1]$.  Let $r = s - \gamma x^2$.  Let $$p_i = u \bigg( \gamma x^2, L(s) - \frac{3}{\sqrt{2}} \log x + d - X_i(r) \bigg).$$ Let $R(s)$ be the position of the right-most particle at time $s$ for a modified process in which particles that reach the origin between times $r$ and $s$ are not killed.  Then $$\P \bigg(R(s) \geq L(s) - \frac{3}{\sqrt{2}} \log x + d \bigg| {\cal F}_{r} \bigg) = 1 - \prod_{i=1}^{N(r)} (1 - p_i).$$  Therefore,
\begin{equation}\label{rteq}
1 - \exp \bigg(-\sum_{i=1}^{N(r)} p_i \bigg) \leq \P \bigg(R(s) \geq L(s) - \frac{3}{\sqrt{2}} \log x + d \bigg| {\cal F}_{r} \bigg) \leq \sum_{i=1}^{N(r)} p_i.
\end{equation}
Consequently, the key to the proof will be obtaining a precise estimate of $\sum_{i=1}^{N(r)} p_i$.

Note that $$p_i = u \bigg(\gamma x^2,  L(s) - \frac{3}{2\sqrt{2}} \log \gamma x^2 + \frac{3}{2 \sqrt{2}} \log \gamma + d - X_i(r) \bigg).$$  Because $L(r) - L(s)$ is bounded above by a constant depending on $u$, it follows from Lemma \ref{withina} that with probability tending to one as $x \rightarrow \infty$, we have
\begin{equation}\label{goodevent}
X_1(r) \leq L(s) + \frac{3}{2 \sqrt{2}} \log \gamma + d - 1 - C_{12},
\end{equation}
where $C_{12}$ is the constant from (\ref{735}).
By Lemma \ref{u1lem}, on this event for sufficiently large $x$ we have
\begin{equation}\label{pRST}
C' R_i S_i T_i \leq p_i \leq C'' R_i S_i T_i
\end{equation}
for all $i$, where
\begin{align}
R_i &= L(s) + \frac{3}{2 \sqrt{2}} \log \gamma + d - X_i(r), \nonumber \\
S_i &= \exp \bigg( - \sqrt{2} \big( L(s) + (3/2 \sqrt{2}) \log \gamma + d - X_i(r) \big) \bigg), \nonumber \\
T_i &= \exp \bigg( - \frac{(L(s) + (3/2\sqrt{2}) \log \gamma + d - X_i(r))^2}{2 \gamma x^2} \bigg). \nonumber
\end{align}
Let $$a = L(s) - L(r) + \frac{3}{2 \sqrt{2}} \log \gamma + d.$$  Then
\begin{equation}\label{Ri}
R_i = L(r) \bigg(1 - \frac{X_i(r)}{L(r)} + \frac{a}{L(r)}\bigg).
\end{equation}
Also,
\begin{equation}\label{Si}
S_i = \gamma^{-3/2} e^{-\sqrt{2} d} e^{-\sqrt{2} L(s)} e^{\sqrt{2} X_i(r)}.
\end{equation}
Finally, because $$\frac{L(s)^2}{2 \gamma x^2} = \frac{c^2 (t-s)^{2/3}}{2 \gamma c^2 t^{2/3}} = \frac{1}{2 \gamma} \bigg(1 - \frac{s}{t} \bigg)^{2/3} = \frac{1}{2 \gamma} \bigg(1 - \frac{u}{\tau} \bigg)^{2/3},$$ we have
\begin{align}\label{Ti}
T_i &= \exp \bigg(- \frac{1}{2 \gamma x^2} \bigg( \big(L(r) - X_i(r)\big)^2 + 2a (L(r) - X_i(r)) + a^2 \bigg) \bigg) \nonumber \\
&= \exp \bigg( - \frac{L(s)^2 - (L(s)^2 - L(r)^2)}{2 \gamma x^2} \bigg(1 - \frac{X_i(r)}{L(r)} \bigg)^2 - \frac{2a (L(r) - X_i(r)) + a^2}{2 \gamma x^2} \bigg) \nonumber \\
&= \exp \bigg( - \frac{1}{2 \gamma} \bigg(1 - \frac{u}{\tau} \bigg)^{2/3} \bigg(1 - \frac{X_i(r)}{L(r)} \bigg)^2 \bigg) U_i,
\end{align}
where $U_i \rightarrow 1$ as $x \rightarrow \infty$ uniformly in $i$ because $a/x \rightarrow 0$ and $(L(s)^2 - L(r)^2)/x^2 \rightarrow 0$ as $x \rightarrow \infty$.   Therefore, by (\ref{Ri}), (\ref{Si}), and (\ref{Ti}),
\begin{align}\label{RST}
\sum_{i=1}^{N(r)} R_i S_i T_i &= \gamma^{-3/2} e^{-\sqrt{2} d} e^{-\sqrt{2} L(s)} L(r) \sum_{i=1}^{N(r)} U_i e^{\sqrt{2} X_i(r)} \bigg(1 - \frac{X_i(r)}{L(r)} + \frac{a}{L(r)} \bigg) \nonumber \\
&\qquad \times \exp \bigg( - \frac{1}{2 \gamma} \bigg(1 - \frac{u}{\tau} \bigg)^{2/3} \bigg(1 - \frac{X_i(r)}{L(r)} \bigg)^2 \bigg).
\end{align}

Consider the function $\phi: [0,1] \rightarrow \R$ defined by $$\phi(z) = (1 - z) \exp \bigg( - \frac{1}{2 \gamma} \bigg(1 - \frac{u}{\tau} \bigg)^{2/3} (1-z)^2 \bigg).$$  By (\ref{phiprob}), applied with $s_n = ux_n^3 - \gamma x_n^2$, where $(x_n)_{n=1}^{\infty}$ is a sequence tending to infinity, we have,
\begin{equation}\label{Yprob}
\frac{1}{Y(r)} \sum_{i=1}^{N(r)} e^{\sqrt{2}X_i(r)} \phi \bigg( \frac{X_i(r)}{L(r)} \bigg) \rightarrow_p \frac{\pi}{2} \int_0^1 (1 - z) \exp \bigg( - \frac{1}{2 \gamma} \bigg(1 - \frac{u}{\tau} \bigg)^{2/3} (1-z)^2 \bigg) \sin(\pi z) \: dz.
\end{equation}
Now let $\alpha = (2 \gamma)^{-1/2} (1 - u/\tau)^{1/3}$ and make the substitution $y = \alpha (1-z)$ to get that the right-hand side of (\ref{Yprob}) is
\begin{equation}\label{gamasymp}
\frac{\pi}{2} \int_0^{\alpha} \frac{y}{\alpha} e^{-y^2} \sin \bigg( \frac{\pi y}{\alpha} \bigg) \cdot \frac{1}{\alpha} \: dy \asymp \frac{1}{\alpha^3} \asymp \gamma^{3/2},
\end{equation}
where $\asymp$ means that the ratio of the two sides is bounded above and below by positive constants.
Furthermore, $\sum_{i=1}^{N(r)} e^{\sqrt{2}X_i(r)} = Y(r)$ and $a/L(r)$ tends to zero as $x \rightarrow \infty$.  It thus follows from (\ref{RST}), (\ref{Yprob}), and (\ref{gamasymp}) that on the event (\ref{goodevent}), we have
\begin{equation}\label{sumRST}
\sum_{i=1}^{N(r)} R_i S_i T_i = e^{-\sqrt{2} d} e^{-\sqrt{2} L(s)} L(r) Y(r) H(u, x, \gamma),
\end{equation}
where $H(u, x, \gamma)$ converges in probability as $x \rightarrow \infty$ to some number which is bounded between two positive constants that do not depend on $\gamma$.  Note that $e^{-\sqrt{2} L(s)} = e^{-(3 \pi^2)^{1/3}(t - s)^{1/3}}$.  Therefore, because $Z(r) \leq Y(r)$, we can use Propositions \ref{Zlower} and \ref{Yupper} to conclude that with probability at least $1 - 2 \varepsilon$, we have $C' \leq e^{-\sqrt{2} L(s)} L(r) Y(r) \leq C''$ for sufficiently large $x$.  Combining this result with (\ref{pRST}) and (\ref{sumRST}), we get that there are constants $C_{16}$ and $C_{17}$, not depending on $\gamma$, such that for sufficiently large $x$,
\begin{equation}\label{pibound}
\P \bigg( C_{16} e^{-\sqrt{2} d} \leq \sum_{i=1}^{N(r)} p_i \leq C_{17} e^{-\sqrt{2} d} \bigg) > 1 - 3 \varepsilon.
\end{equation}

Now choose $d_2 > 0$ large enough that $C_{17} e^{-\sqrt{2} d_2} < \varepsilon$.  By (\ref{rteq}) and (\ref{pibound}),
\begin{align}\label{bigrt1}
\P \bigg(X_1(s) \geq L(s) - \frac{3}{\sqrt{2}} \log x + d_2 \bigg) &\leq \P \bigg(R(s) \geq L(s) - \frac{3}{\sqrt{2}} \log x + d_2 \bigg) \nonumber \\
&\leq C_{17} e^{-\sqrt{2} d_2} + 3 \varepsilon \nonumber \\
&\leq 4 \varepsilon.
\end{align}
Likewise, we can choose $d_1 > 0$ large enough that $\exp(-C_{16} e^{\sqrt{2} d_1}) \leq \varepsilon$.  By (\ref{rteq}) and (\ref{pibound}),
\begin{equation}\label{Rlower}
\P \bigg(R(s) \leq L(s) - \frac{3}{\sqrt{2}} \log x - d_1 \bigg) \leq \exp \big( -C_{16} e^{\sqrt{2} d_1} \big) + 3 \varepsilon \leq 4 \varepsilon.
\end{equation}
It remains to bound the probability that $R(s) > L(s) - (3/\sqrt{2}) \log x - d_1$ but $X_1(s) \leq L(s) - (3/\sqrt{2}) \log x - d_1$.  This could only happen if some particle reaches $0$ between times $r$ and $s$ and then, for the modified process in which killing is suppressed during this time, some descendant particle is to the right of $L(s) - (3/\sqrt{2}) \log x - d_1$ at time $s$.  However, by Lemma \ref{Dlem}, with probability at least $1 - 6 \varepsilon$, at most $C \gamma x^{-1} \exp((3 \pi^2)^{1/3} (t-s)^{1/3}) = C \gamma x^{-1} e^{\sqrt{2} L(s)}$ particles reach the origin between times $r$ and $s$.  Conditional on this event, by Lemma \ref{u2lem}, the expected number of these particles with a descendant to the right of $L(s) - (3/\sqrt{2}) \log x - y$ at time $s$ is at most $$C \gamma x^{-1} e^{\sqrt{2} L(s)} \cdot \gamma^{-3/2} x^{-3} L(s) e^{-\sqrt{2}(L(s) - (3/\sqrt{2}) \log x - d_1)} e^{-C_{15}/\gamma} \leq C_{18} \gamma^{-1/2} e^{\sqrt{2} d_1} e^{-C_{15}/\gamma}.$$  Combining this result with (\ref{Rlower}) and Markov's Inequality, and choosing $\gamma$ small enough that $C_{18} \gamma^{-1/2} e^{\sqrt{2} d_1} e^{-C_{15}/\gamma} < \varepsilon$, we get, for sufficiently large $x$,
\begin{equation}\label{bigrt2}
\P \bigg( X_1(s) \leq L(s) - \frac{3}{\sqrt{2}} \log x - d_1 \bigg) \leq 4 \varepsilon + 6 \varepsilon + C_{18} \gamma^{-1/2} e^{\sqrt{2} d_1} e^{-C_{15}/\gamma} \leq 11 \varepsilon.
\end{equation}
The result follows from (\ref{bigrt1}) and (\ref{bigrt2}).
\end{proof}


\bigskip \noindent
Julien Berestycki: \\
Universit\'e Pierre et Marie Curie.  LPMA / UMR 7599, Bo\^ite courrier 188. 75252 Paris Cedex 05

\medskip \noindent Nathana\"el Berestycki: \\
DPMMS, University of Cambridge. Wilberforce Rd., Cambridge CB3 0WB

\medskip \noindent
Jason Schweinsberg:\\
University of California at San Diego, Dept. of Mathematics. 9500 Gilman Drive; La Jolla, CA 92093-0112


\begin{thebibliography}{99}



\bibitem{abbs} E. A\"id\'ekon, J. Berestycki, E. Brunet, and Z. Shi. Branching Brownian motion seen from its tip. \emph{Probab. Theor. Rel. Fields}, to appear.

\bibitem{abk} L.-P. Arguin, A. Bovier, and N. Kistler.  The extremal process of branching Brownian motion. \emph{Probab. Theor. Rel. Fields}, to appear.

\bibitem{AFG} A. Asselah, P. Ferrari, and P. Groisman (2011). Quasi-stationary distributions and Fleming-Viot processes in finite spaces. \emph{J. Appl. Probab.} 48, (2011), 2: 322--332

\bibitem{AFGJ} A. Asselah, P. Ferrari, P. Groisman and M. Jonckheere.  Fleming-Viot selects the minimal quasi-stationary distribution: The Galton-Watson case. Preprint, arXiv:1206.6114.

\bibitem{bego09} J. B\'erard and J.-B. Gou\'er\'e (2011).  Survival probability of the branching random walk killed below a linear boundary.  {\it Electron. J. Probab.} {\bf 16}, 396-418.

\bibitem{bbs} J. Berestycki, N. Berestycki, and J. Schweinsberg.  The genealogy of branching Brownian motion with absorption.  {\it Ann. Probab.} {\bf 41}, 527-618.

\bibitem{bbs2} J. Berestycki, N. Berestycki, and J. Schweinsberg.  Critical branching Brownian motion with absorption: survival probability.  Preprint, arXiv:1212.3821.

\bibitem{bram83} M. Bramson (1983).  Convergence of solutions of the Kolmogorov equation to travelling waves.  {\it Mem. Amer. Math. Soc.} {\bf 44}, no. 285.

\bibitem{bdmm1} E. Brunet, B. Derrida, A. H. Mueller, and S. Munier (2006).  Noisy traveling waves: effect of selection on genealogies.  {\it Europhys. Lett.}  {\bf 76}, 1-7.

\bibitem{bdmm2} E. Brunet, B. Derrida, A. H. Mueller, and S. Munier (2007).  Effect of selection on ancestry: an exactly soluble case and its phenomenological generalization.  {\it Phys. Rev. E} {\bf 76}, 041104.

\bibitem{burdzy1} K. Burdzy, R. Holyst, D. Ingerman, and P. March (1996).  Configurational transition in a Fleming- Viot-type model and probabilistic interpretation of Laplacian eigenfunctions {\it J. Phys. A: Math. Gen.}  {\bf 29},  2633-2642.

\bibitem{burdzy2} K. Burdzy, R. Holyst, P. March (2000).  A Fleming-Viot particle representation of the Dirichlet Laplacian. \emph{Comm. Math. Phys.} {\bf 214}, 679--703.

\bibitem{fz} M. Fang and O. Zeitouni (2010).  Consistent minimal displacement of branching random walks.  {\it Electron. Commun. Probab.} {\bf 15}, 106-118.

\bibitem{grigorescu1} I. Grigorescu and M.  Kang. (2004). Hydrodynamic limit for a Fleming-Viot type system. \emph{Stochastic
Process. Appl.} 110 (2004), no. 1, 111--143.

\bibitem{grigorescu2}I. Grigorescu and M.  Kang (2011). Immortal particle for a catalytic branching process,
\emph{Probab. Theor. Rel. Fields}, 153 , 1-2, 333--361


\bibitem{hnrr1} F. Hamel, J. Nolen, J.-M. Roquejoffre, L. Ryzhik (2013). A short proof of the logarithmic Bramson correction in Fisher-KPP equations. {\it Networks and Heterogeneous Media} {\bf 8}, 275-289.

\bibitem{hnrr2} F. Hamel, J. Nolen, J.-M. Roquejoffre, L. Ryzhik. The logarithmic delay of KPP fronts in a periodic medium. Preprint, arXiv:1211.6173.

\bibitem{hh07} J. W. Harris and S. C. Harris (2007).  Survival probabilities for branching Brownian motion with absorption.  {\it Elect. Comm. Probab.} {\bf 12}, 81-92.

\bibitem{hhk06} J. W. Harris, S. C. Harris, and A. E. Kyprianou (2006).  Further probabilistic analysis of the Fisher-Kolmogorov-Petrovskii-Piscounov equation: one-sided traveling waves.  {\it Ann. Inst. H. Poincar\'e Probab. Statist.} {\bf 42}, 125-145.

\bibitem{haro} S. C. Harris and M. I. Roberts (2012).  The unscaled paths of branching Brownian motion.  {\it Ann. Inst. H. Poincar\'e Probab. Statist.} {\bf 48}, 579-608.

\bibitem{kall} O. Kallenberg (2002).  {\it Foundations of Modern Probability}.  2nd ed.  Springer, New York.

\bibitem{kesten} H. Kesten (1978).  Branching Brownian motion with absorption.  {\it Stochastic Process. Appl.} {\bf 7}, 9-47.

\bibitem{maillard} P. Maillard.  Speed and fluctuations of $N$-particle branching Brownian motion with spatial selection. Preprint, arXiv:1304.0562.

\bibitem{nev87} J. Neveu (1988).  Multiplicative martingales for spatial branching processes.  In {\it Seminar on Stochastic Processes, 1987}.  (E. \c{C}inlar, K. L. Chung, and R. K. Getoor, eds.)  {\it Prog. Probab. Statist.}  {\bf 15}, 223-241.  Birkh\"auser, Boston.

\bibitem{sawyer} S. Sawyer (1976).  Branching diffusion processes in population genetics.  {\it Adv. Appl. Probab.} {\bf 8}, 659-689.

\bibitem{ServetMartinez} S. Martinez and J. San Martin (1994). Quasi-stationary distributions for a Brownian motion with drift and associated limit laws. \emph{J. Appl. Prob.}
31, 4, 911--920

\bibitem{yaglom} A. M. Yaglom (1947).  Certain limit theorems of the theory of branching random processes. \emph{Doklady Akad. Nauk SSSR (N.S.)} {\bf 56},  795-798.

\end{thebibliography}
\end{document}